\newtheorem{theorem}{Theorem}[section]
\newtheorem{lemma}[theorem]{Lemma}
\newtheorem{proposition}[theorem]{Proposition}
\newtheorem{conjecture}[theorem]{Conjecture}
\theoremstyle{definition}
\newtheorem{definition}[theorem]{Definition}
\newtheorem{assumption}[theorem]{Assumption}
\theoremstyle{remark}
\newtheorem{remark}[theorem]{Remark}
\numberwithin{equation}{section}
\newcommand\Z{\ensuremath{\mathbb Z}}\newcommand\A{\ensuremath{\mathbb A}}
\newcommand\Q{\ensuremath{\mathbb Q}}
\newcommand\C{\ensuremath{\mathbb C}}\newcommand\F{\ensuremath{\mathbb F}}
\newcommand\cM{\ensuremath{\mathcal M}}
\newcommand\Aut{\operatorname{Aut}}
\newcommand\End{\operatorname{End}}
\newcommand\Gal{\operatorname{Gal}}
\newcommand\GL{\operatorname{GL}}
\newcommand\Hom{\operatorname{Hom}}
\newcommand\Id{\operatorname{Id}}
\newcommand\ord{\operatorname{ord}}
\newcommand\Pic{\operatorname{Pic}}
\newcommand\Res{\operatorname{Res}}
\newcommand\CH{\operatorname{CH}}
\newcommand{\fp}{{\mathfrak{p}}}
\newcommand{\fc}{{\mathfrak{c}}}
\newcommand{\fa}{{\mathfrak{a}}}
\newcommand{\fq}{{\mathfrak{q}}}
\newcommand{\cE}{{\mathcal{E}}}
\renewcommand{\L}{\mathcal{L}}
\newcommand{\tto}[1]{%
\ifthenelse{\equal{#1}{}}{\to}{\stackrel{#1}{\to}}}
\newcommand{\Lp}{\L_p^g(\breve {\bf f}, \breve {\bf g}, \breve {\bf h})}
\def\cO{{\mathcal O}}
\newcommand{\mtx}[4]{\left(\begin{matrix}#1&#2\\#3&#4\end{matrix}\right)}
\def\p{\mathfrak p}
\newcommand{\comp}{\begin{picture}(6,5)(-3,-2)\put(0,1){\circle{2}} \end{picture}}\def\circ{\comp}
\def\cN{\mathcal N}
\newcommand{\ra}{\rightarrow}
\newcommand{\lra}{\longrightarrow}
\newcommand{\Qbar}{\overline\Q}
\def\XXint#1#2#3{{\setbox0=\hbox{$#1{#2#3}{\int}$}
\vcenter{\hbox{$#2#3$}}\kern-.5\wd0}}
\newcommand{\bff}{{\bf f}}
\newcommand{\bfg}{{\bf g}}
\newcommand{\bfh}{{\bf h}}
\newcommand{\ubff}{\breve{\bf f}}
\newcommand{\ubfg}{\breve{\bf g}}
\newcommand{\ubfh}{\breve{\bf h}}
\newcommand\cW{\ensuremath{\mathcal W}}
\newcommand\coh{\operatorname{H}}
\newcommand\AJ{\operatorname{AJ}}
\newcommand\fil{\operatorname{Fil}}
\newcommand{\Fil}{\fil}
\newcommand\norm{\operatorname{N}}
\newcommand{\etale}{\mathrm{et}}
\newcommand{\derham}{\mathrm{dR}}
\newcommand{\unitroot}{\mathrm{u-r}}
\newcommand{\fe}{\mathfrak{e}}
\newcommand{\ff}{\mathfrak{f}}
\newcommand{\dR}{\derham}
\renewcommand{\hom}{\mathrm{Hom}}
\begin{document}

\title{On the elliptic Stark conjecture in higher weight}

\author{Francesca Gatti}
\address{Departament de Matem\`atiques \\
Universitat Polit\`ecnica de Catalunya \\
Catalonia}
\email{francesca.gatti@upc.edu}

\author{Xavier Guitart}
\address{Departament de Matem\`atiques i Inform\`atica\\
Universitat de Barcelona\\
Catalonia\\
}
\email{xevi.guitart@gmail.com}


\date{\today}

\dedicatory{}

\begin{abstract}
We study the special values of the triple product $p$-adic $L$-function constructed by Darmon and Rotger at all classical points outside the region of interpolation. We propose conjectural formulas for these values that can be seen as extending the Elliptic Stark Conjecture, and we provide theoretical evidence for them by proving some particular cases.
\end{abstract}
\maketitle 


 \section{Introduction}\label{sec: intro}

 A common characteristic of $p$-adic $L$-functions is that they can be defined by interpolating critical special values of complex $L$-functions. Since complex $L$-functions are attached (at least conjecturally) to motives, this process can be seen in many instances as associating a $p$-adic $L$-function to a $p$-adic family of motives. In these cases, if the domain of the $p$-adic $L$-function is a $p$-adic space $\cW$ there is a family of motives ${\bf M}=\{M_x\}_{x\in\Sigma^{\mathrm{cl}}}$  parametrized by a subset $\Sigma^{\mathrm{cl}}\subset\cW$ of {\em classical} points which is dense  with respect to the Zariski topology. The region of  interpolation is  a  subset $\Sigma^{\mathrm{int}}\subset \Sigma^{\mathrm{cl}}$, again dense within $\cW$, with the property that for each $x\in\Sigma^{\mathrm{int}}$ there is a canonical period $\Omega_x\in\C$ in the sense of Deligne such that the value of the complex $L$-function $L(M_x,c_x)/\Omega_x$ at its critical point $c_x$ is algebraic; when multiplied by an appropriate Euler $p$-factor $\cE(M_x)\in\Qbar$, these values can be $p$-adically interpolated to a rigid-analytic function on $\cW$. The $p$-adic $L$-function attached to ${\bf M}$ is then a function $\L_p({\bf M},s)\colon \cW \ra \C_p$ such that 
\begin{align*}
  \L_p({\bf M},x)=\frac{\cE(M_x)}{\Omega_x}L(M_x,c_x), \quad x\in \Sigma^{\mathrm{int}}.
\end{align*}

In this framework, it is usually of great interest to study the values $\L_p({\bf M},x)$ at classical points $x\in\Sigma^{\mathrm{cl}}\setminus \Sigma^{\mathrm{int}}$ lying outside the region of interpolation, as it is believed they should encode a $p$-adic invariant associated to the idoneous motivic cohomology group of the motive $M_x$. 

A prototypical and classical example of this situation is Leopoldt's $p$-adic formula for the value at $s=1$ of the Kubota--Leopoldt $p$-adic $L$-function  associated to an even Dirichlet character $\chi$. The interpolation formula for this $p$-adic $L$-function $\L_p(\chi,s)$ reads
\begin{align*}
  \L_p(\chi,k)=(1-\chi(p)\omega(p)^{-k}p^{-k})L(\chi\omega^{-k},k) \text{ for }k\in\Z_{\leq 0},
\end{align*}
where $\omega\colon\Z/p\Z\ra \C^\times$ is the Teichmuller character. Therefore, $\L_p(\chi,s)$ for $s\in\Z_p$ can be interpreted as the $p$-adic $L$-function associated to $\{\Z(\chi)(k)\}_{k\in\Z}$, the family of Tate twists of the Dirichlet motive $\Z(\chi)$, with region of interpolation $\Sigma^{\mathrm{int}} = \Z_{\leq 0}$. The value at $k=1$ is then outside the region of interpolation, and Leopoldt's formula relates $\L_p(\chi,1)$ to the $p$-adic logarithm of a circular unit in the cyclotomic field $\Q(\zeta_N)$:
\begin{align*}
  \L_p(\chi,1)=-\frac{(1-\chi(p)p^{-1})}{\mathfrak{g}(\chi^{-1})}\sum_{j=1}^{N-1}\chi^{-1}(j)\log_p(1-\zeta_N^j),
\end{align*}
where $N$ is the conductor of $\chi$,  $\zeta_N$ is a $N$-th root of unity, and $\mathfrak{g}$ denotes the Gauss sum.

There are many other illustrative examples of this philosophy. Some classical and relatively recent formulas exhibiting this phenomenon are summarized in the survey \cite{tale}, but very recently there have been exciting developments in this direction, including \cite{BSV}, \cite{DR3}, \cite{Fo}, \cite{LSZ1}, \cite{LSZ2}.

In spite of that, all these scattered formulas in the literature do not provide a systematic and throughout study of the  collection of special values of a  $p$-adic $L$-function as a whole and it is not always easy to have a good understanding of the complete picture. The main aim of the present article is coming to terms with this problem, providing a complete, systematic (and often conjectural) answer to this question in the case of the Garret--Hida $p$-adic $L$-functions $$\L_p^f(\breve{\bf{f}}, \breve{\bf{g}}, \breve{\bf{h}}),\,  \L_p^g(\breve{\bf{f}}, \breve{\bf{g}}, \breve{\bf{h}}), \, \L_p^h(\breve{\bf{f}}, \breve{\bf{g}}, \breve{\bf{h}})$$ attached to a triple of (test vectors associated to) Hida families $\breve{\bf{f}}, \breve{\bf{g}}, \breve{\bf{h}}$ introduced by Darmon--Rotger \cite{DR1}. 

By symmetry, it is enough to consider one of these functions, say $\L_p^g(\breve{\bf{f}}, \breve{\bf{g}}, \breve{\bf{h}})$, which interpolates the square-roots of the central values of the classical $L$-function $L(\breve{f}_k\otimes \breve{g}_\ell\otimes \breve{h}_m,s)$ attached to the specializations of the Hida families at classical points of weights $k,\ell,m$  with $k,\ell,m\geq 2$ and $\ell\geq k+m$. 

There are currently some results and conjectures for the value of $\L_p^g(\breve{\bf{f}}, \breve{\bf{g}}, \breve{\bf{h}})(k,\ell,m)$  at {\em some} classical points $(k,\ell,m) \in  \Sigma^{\mathrm{cl}}\setminus \Sigma^{\mathrm{int}}$ with $k\geq 2$ and $\ell,m\geq 1$ that lie outside this region of interpolation. After reviewing them for the convenience of the reader, our goal is to complete the picture by formulating a conjectural formula for {\em each} point in $ \Sigma^{\mathrm{cl}}\setminus \Sigma^{\mathrm{int}}$, as well as proving evidence for this conjecture by proving some particular cases.  

In order to explain this idea more precisely and to review the known results and conjectures to date, it is convenient to briefly recall some terminology related to Hida families.

 Let $p\geq 3$ be a prime and let $\Lambda=\Z_p[[1+p\Z_p]]$ be the Iwasawa algebra. A cuspidal Hida family $\bf f$ of tame level $N_f$ and Nebentype character $\chi_f$ can be regarded as a power series ${\bf f} =\sum a_n({\bf f})q^n\in \Lambda_{\bf f}[[q]]$, where $\Lambda_{\bf f}$ is a finite flat extension of $\Lambda$. A point $\nu$ in the weight space $\cW_{\bf f}=\Hom(\Lambda_{\bf f},\C_p)$ is called classical crystalline of weight $k$ if its restriction to $\Lambda$ is of the form $x\mapsto \omega^k(x)x^k$ for some $k\in \Z_{\geq 2}$, where $\omega$ denotes the Teichmuller character. The specialization ${\bf f}_\nu=\sum \nu(a_n({\bf f}))q^n$ at such points is then a $p$-ordinary cuspidal eigenform of weight $k$, level $pN_f$ and character $\chi_f$. If ${\bf f}_\nu$ is old at $p$, which is {\em always} the case if $k>2$,  then it is the ordinary $p$-stabilization of an eigenform of level $N_f$ that we denote by $f_k$; if it is new at $p$, which can only occur if $k=2$, then we simply put $f_k = {\bf f}_\nu$. We will also be interested in some weight one specializations of cuspidal Hida families, although in this case such specializations are not guaranteed to be neither classical nor cuspidal. If $N$ is a multiple of $N_f$, a test vector for ${\bf f}$ of tame level $N$ is a family of the form $\sum\lambda_d {\bf f}(q^d)\in \Lambda_{\bf f}[[q]]$ with $\lambda_d\in \Lambda_{\bf f}$, where $d$ runs over the divisors of $N/N_f$.

 Let now $\bf{f}, \bf{g}, \bf{h}$ be $\Lambda$-adic cuspidal Hida families of tame levels $N_f$, $N_g$, and $N_h$,  and tame Nebentype characters $\chi_f$, $\chi_g$, $\chi_h$ satisfying that $\gcd(N_f,N_g,N_h)$ is squarefree and $\chi_f\chi_g\chi_h=1$. Put  $N=\operatorname{lcm}(N_f,N_g,N_h)$ and suppose that $p\nmid N$. If $\breve{\bf f}$, $\breve{\bf g}$, and $\breve{\bf h}$ are test vectors of tame level $N$ associated to ${\bf f}$, ${\bf g}$, and ${\bf h}$, one can consider the three variable $p$-adic $L$-function
 \begin{align*}
   \L_p^g(\breve{\bf f}, \breve{\bf g}, \breve{\bf h})\colon \cW_{\bf f}\times \cW_{\bf g}\times \cW_{\bf h}\lra \C_p
 \end{align*}
 constructed in  \cite{DR1}. 

For simplicity of notation and exposition (and also because this is the most interesting setting), let us assume that  ${\bf g}$ and ${\bf h}$ specialize to a classical modular form at all crystalline points of weight one. Then the set of classical  crystalline specializations of $ \L_p^g(\breve{\bf f}, \breve{\bf g}, \breve{\bf h})(k,\ell,m)$ can be divided into four regions as follows:
 \begin{enumerate}
 \item $\Sigma^{f}=\{(k,\ell,m)\colon k\geq 2,\ \ell,m\geq 1\text{ and } k\geq \ell + m\}$;
 \item $\Sigma^{g}=\{(k,\ell,m)\colon k\geq 2,\ \ell,m\geq 1\text{ and } \ell \geq k + m\}$;
 \item $\Sigma^{h}=\{(k,\ell,m)\colon k\geq 2,\ \ell,m\geq 1\text{ and } m \geq k + \ell\}$;
    \item $\Sigma^{\mathrm{bal}}= \{(k,\ell,m)\colon k\geq 2,\ \ell,m\geq 1\}\setminus \left(\Sigma^f\cup \Sigma^g\cup \Sigma^h \right)$.
    \end{enumerate}
The type of arithmetic information encoded by $ \L_p^g(\breve{\bf f}, \breve{\bf g}, \breve{\bf h})(k,\ell,m)$ depends on the region where $(k,\ell,m)$ lies. For example, $\Sigma^{g}$ is the region of classical interpolation and therefore $ \L_p^g(\breve{\bf f}, \breve{\bf g}, \breve{\bf h})(k,\ell,m)$ can be expressed in terms of the algebraic part of the central value of the classical $L$-function $L(\breve{f}_k,\breve{g}_\ell,\breve{h}_m,s)$ by the interpolation formula of \cite[Theorem 4.7]{DR1}. More recently, Hsieh \cite{Hsieh} found an explicit choice of test vectors for which an improved interpolation formula holds (see \cite[Theorem A]{Hsieh} or Theorem \ref{thm: Hsieh interpolation formula} below for the precise formula).

On the other hand, if $(k,\ell,m)\in \Sigma^{\mathrm{bal}}$, then $ \L_p^g(\breve{\bf f}, \breve{\bf g}, \breve{\bf h})(k,\ell,m)$ can be expressed in terms of the syntomic Abel--Jacobi image of a generalized diagonal cycle $\Delta_{k,\ell,m}$ in the product of Kuga--Sato varieties $W=W_{k-2}\times W_{\ell-2}\times W_{m-2}$ (here $W_{k-2}$ denotes the desingularization of the $(k-2)$-fold fiber product of the universal elliptic curve over the modular curve $X_1(N_f)$, and similarly for $W_{\ell-2}$ and $W_{m-2}$).  More precisely, \cite[Theorem 5.1]{DR1} states that
\begin{align*}
   \L_p^g(\breve{\bf f}, \breve{\bf g}, \breve{\bf h})(k,\ell,m) =(-1)^{(\ell-k-m+2)/2} \frac{\cE(f,g,h)}{\cE_0(g)\cE_1(g)}\mathrm{AJ}_p(\Delta_{k,\ell,m})(\omega_f\otimes \eta_g^{\mathrm{u-r}}\otimes \omega_h),
 \end{align*}
 where $\mathrm{AJ}_p$ is the syntomic Abel--Jacobi map on the Chow group of $W$, $\cE(f,g,h), \cE_0(g), \cE_1(g)$ are explicit Euler factors, and $\omega_f\otimes \eta_g^{\mathrm{u-r}}\otimes \omega_h\in H^{k+\ell+m-3}_{\mathrm{dR}}(W/\Q_p)$ is a certain cohomology class naturally attached to the forms $f$, $g$, and $h$.

 The cases $\Sigma^f$ and $\Sigma^h$ turn out to be symmetric, so it remains to consider $\Sigma^f$. In this case, the article \cite{DLR} can be viewed as the first step towards understanding the values of  $\L_p^g(\breve{\bf f}, \breve{\bf g}, \breve{\bf h})$ at classical weights in $\Sigma^f$ by means of the so-called Elliptic Stark Conjecture, which gives a conjectural formula for $\L_p^g(\breve{\bf f}, \breve{\bf g}, \breve{\bf h})(2,1,1)$ under an additional classicality assumption on the weight one specialization of ${\bf g}$. The aim of the present article is to extend the conjectural picture proposed in \cite{DLR} to all classical weights in $\Sigma^f$,  thus completing the (partially conjectural) understanding of $  \L_p^g(\breve{\bf f}, \breve{\bf g}, \breve{\bf h})(k,\ell,m)$ at all classical weights.

 A striking feature of the Elliptic Stark Conjecture of \cite{DLR} is that it gives insight into an arithmetic problem related to the Birch and Swinnerton--Dyer conjecture in rank $2$, as we next recall.  Let  $E$ be an elliptic curve defined over $\Q$ and denote by $V_p(E)$ its $p$-adic Tate module, viewed as a representation of  $G_\Q=\Gal(\Qbar/\Q)$. Let $$g\in M_1(N_g,\chi_g)_L \text{ and } h\in M_1(N_h,\chi_h)_L$$ be eigenforms of weight one, Fourier coefficients in a number field $L$, and whose Nebentypus characters satisfy that $\chi_g\cdot \chi_h=1$. Denote by $V_g$ (resp. $V_h$) the Artin representation over $L$ attached to $g$ (resp. $h$), and by $\rho_{gh}$ the representation associated to $V_{gh}=V_g\otimes V_h$. The $L$-function $L(E,\rho_{gh},s)$ associated to the Galois representation $V_p(E)\otimes V_{gh}$ coincides with $L(f\otimes g\otimes h,s)$, the Garret--Rankin--Selberg $L$-function attached to $f$, $g$, and $h$, where $f\in S_2(N_f)$ stands for the modular form of weight $2$ associated to $E$. 
 The equivariant refinement of the Birch and Swinnerton--Dyer (BSD) conjecture then predicts that
\begin{align*}
  \mathrm{ord}_{s=1}L(E,\rho_{gh},s) \stackrel{?}{=} \dim_L \Hom_{G_\Q}(V_{gh},E(H)\otimes L).
\end{align*}

Assume that all the local root numbers of $L(E,\rho_{gh},s)$ are $+1$, and that $\ord_{s=1}L(E,\rho_{gh},1)=2$. Suppose that $p\geq 3$ is a prime with $\ord_p(N_f)\leq 1$ and $p\nmid N_g\cdot N_h$, and denote by $N$ the prime-to-$p$ part of $\operatorname{lcm}(N_f,N_g,N_h)$. Denote by $\alpha_g,\beta_g$ the two roots of characteristic polynomial of the Hecke operator $T_p$ acting on $g$  and let $g_\alpha$ be the  $p$-stabilization of $g$ such that $U_p(g_\alpha) = \alpha_g g_\alpha$ (and define similarly $\alpha_h,\beta_h$ and $h_\alpha$). Let $ {\bf f},  {\bf g},  {\bf h}$ be  Hida families of tame levels $N_f$, $N_g$, $N_h$ and tame Nebentype characters $\chi_f=1$, $\chi_g$, and $\chi_h$ such that ${f}_2 = f,  {\bf g}_1 = g_\alpha, {\bf h}_1 = h_\alpha.$

Let us also assume the classicality hypothesis for $g_\alpha$ (labeled as hypothesis C in \cite{DLR}). 
A crucial ingredient in the conjecture is a regulator, defined roughly as follows. Using the action of the geometric Frobenius element $\sigma_p$ at $p$ on $V_{gh}$, one identifies a certain $2$-dimensional subspace $V_\alpha\subset V_{gh}$ on which $\sigma_p$ acts with eigenvalues $\alpha_g\alpha_h$ and $\alpha_g\beta_h$. Suppose now that $\Phi_1,\Phi_2$ is a basis for $\Hom_{G_\Q}(V_{gh},E(H)\otimes L)$, and denote by $v_1,v_2$ a basis of $V_{\alpha}$. The \emph{regulator} is then defined as
\begin{align*}R_{g_\alpha}(E,\rho_{gh})=\det \left(
  \begin{array}{cc}\log_{E,p}(\Phi_1(v_1)) & \log_{E,p}(\Phi_1(v_2))\\
\log_{E,p}(\Phi_2(v_2)) & \log_{E,p}(\Phi_2(v_1))
\end{array}
\right),
\end{align*}
where $\log_{E,p}\colon E(H)\otimes L\ra \C_p\otimes L$ is the $p$-adic formal group logarithm.

The Elliptic Stark Conjecture then states that there exists a choice of test vectors $\breve {\bf f}, \breve {\bf g}, \breve {\bf h}$ of tame level $N$ associated to $ {\bf f},  {\bf g},  {\bf h}$ such that

\begin{align}\label{eq: ESC}
\L_p^g(\breve {\bf f}, \breve {\bf g}, \breve {\bf h})(2,1,1) = \frac{R_{g_\alpha}(E,\rho_{gh})}{\log_p(u_{g_\alpha})},
\end{align}
where $\log_p\colon H^\times \otimes L \ra \C_p\otimes L$ is the usual $p$-adic logarithm and $u_{g_\alpha}\in \cO_H[1/p]^\times \otimes L$ is the so-called Gross--Stark unit attached to $g_\alpha$, defined in \cite[\S1.2]{DLR}. The main theoretical evidence supporting the Elliptic Stark Conjecture stems from \cite[Theorem 3.1]{DLR}, which proves it in the particular case where $g$ and $h$ are theta series of the same imaginary quadratic field in which $p$ splits. 

The aim of this article is to study natural generalizations of the Elliptic Stark Conjecture, as well as to provide theoretical evidence for them. The first setting we consider is the case where $f\in S_k(N_f,\chi_f)$ is a modular form of weight $k=r+2\geq 2$ and  possibly non-trivial character $\chi_f$, and $g$ and $h$ are modular forms of weight $1$ satisfying now that $\chi_f\cdot\chi_g\cdot \chi_h = 1$. That is to say, in \cite{DLR} the modular form $f$ is the weight two modular form attached to an elliptic curve $E/\Q$, and we now allow $f$ to have higher weight, non-trivial character, and non-rational Fourier coefficients. Observe that the condition $\chi_f\cdot\chi_g\cdot \chi_h = 1$ ensures that $V_f\otimes V_g\otimes V_h$ is Kummer self-dual, and we denote by $L(f,\rho_{gh},s)$ its $L$-series.

In this setting, the role played by the elliptic curve $E$ in the previous discussion is played by the motive attached to $f$, which arises from the Kuga--Sato variety $W_r$ by means of a suitable projector $e_f$, constructed using automorphisms of $W_r$ and Hecke operators,  which projects to the $f$-isotypical component.

Denote  by\footnote{Observe that the condition $\chi_f\cdot\chi_g\cdot\chi_h=1$ forces $k+\ell+m$ to be even, so in particular $k$ is even when $\ell=m=1$} $\CH^{k/2}(W_r/H)_0$ the Chow group of $H$-rational null-homologous cycles in $W_r$ of codimension $k/2$. An equivariant version of the Beilinson conjecture predicts that
\begin{align*}
  \ord_{s=k/2}L(f,\rho_{gh},s) \stackrel{?}{=} \dim_L\Hom_{G_\Q}(V_{gh},e_f\CH^{k/2}(W_r/H)_{0}\otimes L).
\end{align*}
In \S\ref{sec: higher weight esc} we generalize the Elliptic Stark Conjecture to this setting, in which $(f,g,h)$ are of weights $(k,1,1)$ with $k\geq 2$. For this, we extend the definition of Darmon--Rotger--Lauder's regulator, which will now involve the $p$-adic Abel--Jacobi map of cycles in $\CH^{k/2}(W_r)_0$ as a substitute for the $p$-adic logarithm of points on $E$. We then conjecture a formula akin to \eqref{eq: ESC}, namely an equality between $\L_p^g(\breve {\bf f}, \breve {\bf g}, \breve {\bf h})(k,1,1)$ and the regulator. 

In order to provide evidence for the conjecture, in \S \ref{sec: factorization formula} we prove it in a particular case where $g$ and $h$ are theta series of the same imaginary quadratic field in which $p$ splits. The structure of the proof follows the strategy devised in \cite[\S 3.2]{DLR}: we prove a factorization formula of the $p$-adic $L$-function in terms of a product of $p$-adic Rankin $L$-functions and a Katz $p$-adic $L$-function, and we invoke the main theorem of \cite{BDP1}. We remark that our factorization formula of $\Lp$ generalizes those of \cite{DLR} and \cite{CR}, and in addition we provide a simpler proof by taking advantage of the powerful $p$-adic triple $L$-function recently constructed by Hsieh \cite{Hsieh}. 

We also observe that the results of \cite{BDP1} play a key role in the proof of this particular case of the conjecture, for they allow to relate special values of $p$-adic Rankin $L$-functions with $p$-adic Abel--Jacobi images of Heegner cycles. But the main result of \cite{BDP1} (and the more general version of \cite[\S 4.1]{BDP2}) holds in the wider context of generalized Heegner cycles, and one might naturally wonder whether this is a manifestation of a more general version of the Elliptic Stark Conjecture in which $g$ and $h$ are of weight $\geq 2$. This is precisely the study that we undertake in \S \ref{sec: general unbalanced weights}, which as mentioned earlier is also motivated by the aim of providing a formula for $\Lp$ at all classical weights where $f$ is dominant.  

The second setting that we consider, to which we devote  \S \ref{sec: general unbalanced weights}, is that of modular forms $(f,g,h)$ of weights $(k,\ell,m)$ with $k\geq \ell+m$ and $l,m\geq 2$. As we will see in Conjecture \ref{conj: more general weights},  $\Lp(k,\ell,m)$ is expected to be related to a certain regulator (of a more geometric flavor in this case) of cycles on the motive attached to $f\otimes g\otimes h$. In order to provide some theoretical evidence for this conjecture, in \S \ref{subsection: The proof of a special case} we also prove it in a certain particular case where $g$ and $h$ are theta series of the same imaginary quadratic field in which $p$ splits.

Note that in order to complete the study of $\L_p^g(\breve {\bf f}, \breve {\bf g}, \breve {\bf h})$ at the region $\Sigma^f$ one should also consider weights of the form $(k,\ell,1)$ and $(k,1,\ell)$ with $k,\ell\geq 2$. As it will be apparent from the contents of sections \S \ref{sec: higher weight esc} and \S \ref{sec: general unbalanced weights}, this case is in fact a combination of the previous two cases and can be dealt with the same sort of techniques, so we do not include it in our analysis.  

We finally remark that in this note we study the values $\L_p^g(\breve{\bf f}, \breve{\bf g}, \breve{\bf h})(k,\ell,m)$ at classical points under the assumption that the classical $L$-function $L(\breve{f}_k\otimes \breve{g}_\ell\otimes \breve{h}_m,s)$ vanishes. The case where $L(\breve{f}_k\otimes \breve{g}_\ell\otimes \breve{h}_m,s)\neq 0$ will be investigated in the forthcoming work \cite{GGMR}.
 
\vspace{0.15cm}
{\bf Notations.} Throughout the article $p$ will denote an odd prime. We fix embeddings $\Qbar\hookrightarrow \C$ and $\Qbar\hookrightarrow \C_p$, where $\C_p$ denotes the completion of $\Qbar_p$. If $L\subset\Qbar$ is a number field we denote by $L_p$ the completion of $L$ in $\Qbar_p$ under this embedding.  If $L$ is a field we will denote by $S_k(N,\chi)_L$ the space of modular forms of level $k$ and Nebentypus $\chi$ with Fourier coefficients in $L$ (and when $L=\Qbar$ we will usually suppress it from the notation). If $V$ and $W$ are representations of a group $G$ over a field $L$, then $ W^V = \sum_{\phi \in\Hom_G(V,W)}\phi(V)$ denotes the $V$-isotypical component of $W$. If $\psi$ is a Hecke character of an imaginary quadratic field $K$, we will denote by $V_\psi$ the $2$-dimensional $G_\Q$-representation obtained by  induction.

\vspace{0.15cm}
{\bf Acknowledgments.} We are grateful to Victor Rotger for suggesting the problem to us and for his constant help during the preparation of this work. Gatti was partially supported by project MTM2015-63829-P, and Guitart was partially supported by projects MTM2015-66716-P and MTM2015-63829-P. This work has received funding from the European Research Council (ERC) under the European Union's Horizon 2020 research and innovation programme (grant agreement No 682152).

\section{The conjecture in weights $(k,1,1)$}\label{sec: higher weight esc}
The goal of this section is to formulate a generalization of the Elliptic Stark Conjecture for a triple of forms $(f,g,h)$ of weights $(k,1,1)$ with $k\geq 2$. We begin by recalling in \S\ref{subsec: triple L} the three variable $p$-adic $L$-function constructed in \cite{DLR}, which interpolates special values of Garret-Rankin's triple product $L$ function along Hida families, as well as the improved interpolation formulas arising from \cite{Hsieh}. In \S\ref{subsec: kuga sato} we briefly review the properties of Kuga--Sato varieties and $p$-adic Abel--Jacobi maps that we will need in order to define the regulator. Finally, in \S\ref{sec: the conjecture}, we construct the generalized regulator and we state the conjecture.
\subsection{The triple product $p$-adic $L$-function}\label{subsec: triple L}

	\subsubsection{Hida families}\label{subsec: hida families}
Let $\Gamma:=1+p\Z_p$ and let $\Lambda:=\Z_p[[\Gamma]]$ be the Iwasawa algebra. We denote by $\cW:=\mathrm{Spf}(\Lambda)$ the usual \textit{weight space},  which has the property that for any $p$-adic ring $A$ the set of $A$-valued points of $\cW$ is  given by $$\cW(A)=\hom_{\Z_p-\mathrm{alg}}(\Lambda,A)=\hom_\mathrm{cts}(\Gamma,A^\times).$$ 
One can attach a weight space to any finite flat extension $\Lambda_0$ of $\Lambda$ by defining $\cW_0:=\mathrm{Spf}(\Lambda_0)$. This space comes naturally equipped with a \textit{weight map} $\kappa:\cW_0\longrightarrow\cW,$ induced by the inclusion $\Lambda\subseteq \Lambda_0$. An element $\nu$ of $\cW(\C_p)$ is called \textit{classical} if it is of the form $\nu_{k,\epsilon}: x\mapsto \epsilon(x)x^k$ for some Dirichlet  character $\epsilon$ of conductor a power of $p$ and some $k\in \Z_{\geq 2}$. An element  $z\in\cW_0(\C_p)$ is called \textit{classical} if its restriction $\kappa(z)$ to $\Lambda$ is classical. A classical point $z\in\cW_0(\C_p)$ is called \textit{crystalline} if $\kappa(z)$ is of the form $\nu_{k,\omega^{k}}$, where $\omega:\Z_p^\times\rightarrow\mu_{p-1}$ denotes the Teichmuller character. In order to simplify the notation, we will write in this case $\kappa(z)=k$. We will denote by $\cW_0^\mathrm{cl}$ the set of classical points of $\cW_0$ and by $\cW_0^{\circ}$ the subset of crystalline points.

Let $N$ be a positive integer such that $p\nmid N$  and let $\chi:(\Z/N\Z)^\times\rightarrow\C_p^\times$ be a Dirichlet character.
\begin{definition}\label{def: hida family}
	A \textit{Hida family} of tame level $N$ and tame Nebentype character $\chi$ is a triple ${\bf f} =(\Lambda_{\bf f},\cW_{\bf f},{\bf f})$, where:
	\begin{enumerate}[\indent $i)$]
		\item $\Lambda_\bff$ is a finite flat extension of $\Lambda$;
		\item $\cW_\bff$ is a rigid analytic open subvariety of $\mathrm{Spf}(\Lambda_\bff)$;
		\item $\bff=\sum a_n(\bff)q^n\in\Lambda_\bff[[q]]$ is a formal series such that, for each $\nu\in\cW^\mathrm{cl}_{\bff}$, with $\kappa(\nu)=\nu_{k,\epsilon}$, the \textit{specialisation at $\nu$} $$\bff_\nu:=\sum_{n=1}^{\infty}\nu(a_n(\bff))q^n$$ is the $q$-expansion of a classical $p$-ordinary eigenform of weight $k$ and Nebentypus character $\chi\epsilon \omega^{-k}$. 
	\end{enumerate}
We will denote by $S_{\Lambda_\bff}^{\ord}(N,\chi)$ the set of such Hida families.
\end{definition}

Note that, if we restrict to $\cW_\bff^{\circ}$, then all the specializations of $\bff$ have  Nebentypus $\chi$. In particular, since $p$ does not divide the level of $\chi$, if $\nu$ has weight $k>2$, then  by \cite[Lemma 2.1.5]{How07}, the specialisation  $\bff_\nu$ is old at $p$; we will denote $f_\nu\in S_k(N,\chi)$ the newform whose $p$-stabilisation is $\bff_\nu$.   If $k=2$ then $\bff_\nu$ can be either old or new. In this case we denote  $f_\nu:=\bff_\nu$ if it is new, while, if $\bff_\nu$ is old at $p$, we denote $f_\nu$ the newform whose $p$-stabilisation is $\bff_\nu$.

\subsubsection{The complex Garrett--Rankin triple product $L$-function}
Let $$f\in S_k(N_f,\chi_f), \quad g\in S_\ell(N_g,\chi_g), \quad h\in S_m(N_h,\chi_h)$$
be three normalised newforms, cuspidal if they have weight $\geq2$, and assume that $\chi_f\cdot\chi_g\cdot\chi_h=1$. We denote by $V_f$, $V_g$, and $V_h$ the corresponding $2$-dimensional $p$-adic Galois representations. 

The \textit{Garrett--Rankin triple product $L$-function} $L(f\otimes g\otimes h,s)$ is the complex  $L$-function attached to the tensor product $V_{fgh}:=V_f\otimes V_g\otimes V_h$. 

It is defined by an Euler product which is absolutely convergent in the half plane $\mathrm{Re}(s)>\frac{k+\ell+m-1}{2}$. With the appropriate Euler factors at infinity, the completed function
$$\Lambda(f\otimes g\otimes h, s)=L_\infty(f\otimes g\otimes h,s)L(f\otimes g\otimes h,s)$$
extends to the whole complex plane and satisfies a functional equation of the form
\begin{equation}\label{eq: functional equation complex triple}
	\Lambda(f\otimes g\otimes h, s)=\epsilon(f,g,h)\Lambda(f\otimes g\otimes h, k+\ell+m-2-s),
\end{equation}
where $\epsilon(f,g,h)\in\{ \pm1 \}$ is the \textit{sign} of the functional equation. The center of symmetry with respect to (\ref{eq: functional equation complex triple}) is then $c:=\frac{k+\ell+m-2}{2}$, at which $L(f\otimes g\otimes h,s)$ has no pole.  Note that the condition $\chi_f\cdot\chi_g\cdot\chi_h=1$ implies that $k+\ell+m$ is even, so that $c\in\Z$, and moreover, $c$ is a \textit{critical point} for the $L$-function, meaning that $L_\infty(f\otimes g\otimes h,s)$ has no poles at $s=c$.
\begin{definition}
	A triple of weights $(k,\ell,m)\in\Z^3$ is called \textit{unbalanced} if one of the weights is greater or than or equal to the sum of the other two (in which case the greater weight is called \emph{dominant weight}). Otherwise, the triple is called \textit{balanced}.
\end{definition}	
The sign of the functional equation can be expressed as a product of \textit{local signs} over the places of $\Q$. More precisely, if $N:=\text{lcm}(N_f,N_g,N_h)$, then 
\begin{equation}\label{eq: sign of the functional equation}
	\epsilon(f,g,h)=\prod_{v\mid N\cdot\infty}\epsilon_v(f,g,h),
\end{equation}
and the local sign at infinity depends on whether the weights are balanced or unbalanced: 
$$\epsilon_\infty(f,g,h)=\begin{cases}
+1 & \iff (k,\ell,m) \text{ unbalanced} \\ -1 & \iff (k,\ell,m) \text{ balanced.}
\end{cases}$$
For more details in the study of the complex $L$-function, see \cite{PSR87}.
\subsubsection{The triple product $p$-adic $L$-function}
Let $\bff,\bfg,\bfh$ be three Hida families of tame levels $N_\bff$, $N_\bfg$, $N_\bfh$ and tame Nebentypus characters $\chi_\bff$, $ \chi_\bfg$, and $\chi_\bfh$  such that $\chi_\bff\cdot\chi_\bfg\cdot\chi_\bfh=1$. 
As in \cite[Hypothesis (sf) and (CR)]{Hsieh}, we assume the following hypothesis.
\begin{assumption}\label{ass: (sf) and (CR)} 
	\begin{enumerate}[\indent (1)]
		\item $\gcd(N_\bff,N_\bfg,N_\bfh)$ is square free;
		\item the residual representation $\bar{\rho}_\bfg:G_\Q\rightarrow\mathrm{GL}_2(\bar{\F}_p)$ is absolutely irreducible and $p$-distinguished (i.e., its semisimplification does not act as multiplication by scalars when restricted to a decomposition group at $p$).	\end{enumerate}
\end{assumption}
Define the set $\cW_{\bff\bfg\bfh}^{\circ}:=\cW_{\bff}^{\circ}\times\cW_{\bfg}^{\circ}\times\cW_{\bfh}^{\circ}$ of triples of classical crystalline points for $\bff,\bfg,\bfh$. It can be decomposed as $$\cW_{\bff\bfg\bfh}^{\circ}=\cW_{\bff\bfg\bfh}^f\sqcup\cW_{\bff\bfg\bfh}^g\sqcup\cW_{\bff\bfg\bfh}^h\sqcup\cW_{\bff\bfg\bfh}^{\mathrm{bal}},$$
where $\cW_{\bff\bfg\bfh}^f$ is the set of triples $(\nu_1,\nu_2,\nu_3)\in\cW_{\bff\bfg\bfh}^{\circ}$ of unbalanced weights with $\nu_1$ dominant, i.e.\ such that, if $\nu_i$ have weight $k_i$ for $i\in\{1,2,3\}$, then $k_1\geq k_2+k_3$. The sets $\cW_{\bff\bfg\bfh}^g$ and $\cW_{\bff\bfg\bfh}^h$ are defined similarly, with the weight $\nu_2$ and $\nu_3$ dominant respectively, and $\cW_{\bff\bfg\bfh}^{\mathrm{bal}}:=\{ (\nu_1,\nu_2,\nu_3)\in\cW_{\bff\bfg\bfh}^{\circ} \text{ of balanced weights} \}$.

Let $N:=\mathrm{lcm}(N_\bff,N_\bfg,N_\bfh)$  and define  $$S_{\Lambda_\bff}^{\ord}(N,\chi_f)[\bff]:=\{\ubff\in S_{\Lambda_\bff}^{\ord}(N,\chi_f) \mid T_\ell\ubff=a_\ell(\bff)\ubff \text{ for } \ell\nmid Np; \  U_p\ubff=a_p(\bff)\ubff \}$$ the set of \textit{$\Lambda$-adic test vectors} for $\bff$ (here $T_\ell$ and $U_p$ stand for the Hecke operators). Analogously we define $S_{\Lambda_\bfg}^{\ord}(N,\chi_g)[\bfg]$ and $S_{\Lambda_\bfh}^{\ord}(N,\chi_h)[\bfh]$.

For each choice of a triple of test vectors $(\ubff,\ubfg,\ubfh)$ for $(\bff,\bfg,\bfh)$, let  $$\L_p^g(\ubff,\ubfg,\ubfh)\in\Lambda_\bff\hat{\otimes}\mathrm{Frac}(\Lambda_\bfg)\hat{\otimes}\Lambda_\bfh$$ be the triple product $p$-adic $L$-function constructed in \cite{DR1}. It interpolates the square root of the central critical values $L(f_k\otimes g_\ell\otimes h_m,\frac{k+\ell+m-2}{2})$ as the triple of weights $(k,\ell,m)$ varies in $\cW_{\bff\bfg\bfh}^g$. 

 Hsieh \cite{Hsieh} constructed an explicit choice of test vector $(\ubff,\ubfg,\ubfh)$ for which  $\L_p^g(\ubff,\ubfg,\ubfh)$ actually belongs to $\Lambda_{\bff\bfg\bfh}:=\Lambda_\bff\hat{\otimes}\Lambda_\bfg\hat{\otimes}\Lambda_\bfh$ and it satisfies a simpler interpolation formula. We fix this choice of test vector once and for all. Moreover, while for the construction of \cite{DR1}, the specialisation at each classical point of $\bff,\bfg,\bfh$ has to be assumed to be old at $p$, in \cite{Hsieh} the specialisations of the three Hida families are allowed to be either old or new at $p$. We next summarize the interpolation properties of the triple product $p$-adic $L$-function attached to Hsieh's tests vectors. 

We recall that for an eigenform $\phi$ we denote by $\alpha_\phi$ and $\beta_\phi$ the two roots of the characteristic polynomial $x^2 - a_p(\phi)x + p^{k-1}\chi_\phi(p)$, ordered in such a way that $\ord_p(\alpha_\phi)\leq\ord_p(\beta_\phi)$. We will use the convention that, if $p$ divides the level of $\phi$, then $\beta_\phi=0$.

\begin{theorem}[Hsieh]\label{thm: Hsieh interpolation formula}
	Let $(\ubff,\ubfg,\ubfh)\in S_{\Lambda_\bff}^{\ord}(N,\chi_f)[\bff]\times S_{\Lambda_\bfg}^{\ord}(N,\chi_g)[\bfg]\times S_{\Lambda_\bfh}^{\ord}(N,\chi_h)[\bfh]$ be the triple of  $\Lambda$-adic test vectors for $(\bff,\bfg,\bfh)$ defined in \cite[Chapter 3]{Hsieh}. Then the $p$-adic $L$-function 
	$$\L_p^g(\ubff,\ubfg,\ubfh):\cW_{\bff\bfg\bfh}:=\mathrm{Spf}(\Lambda_{\bff\bfg\bfh})\longrightarrow\C_p$$ is uniquely characterised by the following interpolation property: for each $(k,\ell,m)\in\cW_{\bff\bfg\bfh}^g$  	
	\begin{equation*}
	\L_p^g(\ubff,\ubfg,\ubfh)^2(k,\ell,m)= \ L(f_k\otimes g_\ell\otimes h_m,c)\frac{\mathcal{E}(f_k,g_\ell,h_m)^2}{(-4)^\ell\langle g_\ell,g_\ell\rangle^2\mathcal{E}_0(g_\ell)^2\mathcal{E}_1(g_\ell)^2} \fa(k,\ell,m)\prod_{q\in\Sigma_\mathrm{exc}}(1+q^{-1}),
	\end{equation*}
	where:
	\begin{itemize}
\item $\langle \cdot ,\cdot \rangle$ is the Peterson product;
		\item $c=\frac{k+\ell+m-2}{2}$;
		\item $\Sigma_\mathrm{exc}$ is the set of exceptional primes defined in \cite[\S1.5]{Hsieh} 
		\item $\fa(k,\ell,m)=\Gamma_\C\Big(\frac{k+\ell+m-2}{2}\Big)\Gamma_\C\Big(\frac{-k+\ell-m+2}{2}\Big)\Gamma_\C\Big(\frac{k+\ell-m}{2}\Big)\Gamma_\C\Big(\frac{-k+\ell+m}{2}\Big)$, and $\Gamma_\C(s)=\frac{\Gamma(s)}{(2\pi)^s}$.
		\item $
		\mathcal{E}(f_k,g_\ell,h_m)=  \ (1-\beta_{g_\ell}\alpha_{f_k}\alpha_{h_m}p^{-c})(1-\beta_{g_\ell}\alpha_{f_k}\beta_{h_m}p^{-c})
		 \times(1-\beta_{g_\ell}\beta_{f_k}\alpha_{h_m}p^{-c})(1-\beta_{g_\ell}\beta_{f_k}\beta_{h_m}p^{-c});$
	
		\item $\mathcal{E}_0(g_\ell)= 1-\beta_{g_\ell}^2\chi_g^{-1}(p)p^{1-\ell};$
		\item $\mathcal{E}_1(g_\ell)=  1-\beta_{g_\ell}^2\chi_g^{-1}(p)p^{-\ell}.$
	\end{itemize}

\end{theorem}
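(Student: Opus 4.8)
The plan is to reconstruct the argument of \cite{Hsieh}, whose two ingredients are an integral representation of the triple product $L$-function and Hida's theory of $\Lambda$-adic Petersson products. First I would recall Garrett's integral formula, refined through the work of Piatetski-Shapiro--Rallis and Ichino (see \cite{PSR87}): for newforms $(f_k,g_\ell,h_m)$ with $(k,\ell,m)$ in the $g$-dominant range $\ell\geq k+m$, the central value $L(f_k\otimes g_\ell\otimes h_m,c)$, $c=\tfrac{k+\ell+m-2}{2}$, equals --- up to the Petersson norm $\langle g_\ell,g_\ell\rangle^2$ and a product of normalized local zeta integrals that is $1$ at almost all places --- the square of a global trilinear period, realized as the Petersson pairing of $g_\ell^\ast$ against a nearly holomorphic form built from $f_k$, $h_m$ and a diagonally restricted Eisenstein datum. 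The normalization of this pairing (the Atkin--Lehner operator and the choice of a holomorphic test vector) is what produces the factor $(-4)^\ell$ in the denominator.

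Next I would $p$-adically interpolate this period over $\cW^g_{\bff\bfg\bfh}$: as in \cite{DR1}, form the $\Lambda_\bfg$-adic modular form $e_{\mathrm{ord}}(\ubff^{[p]}\times\ubfh)$ --- the ordinary projection of the product of the $p$-depletion of $\ubff$ with $\ubfh$ --- and pair it against the dual family $\ubfg^\ast$ through Hida's $\Lambda$-adic Petersson product; this recovers the defining expression of $\Lp$. The point of Hsieh's construction is the explicit choice of the test vectors $(\ubff,\ubfg,\ubfh)$, dictated by Gross--Prasad-type local vectors at the ramified finite places, so that the pairing lands in $\Lambda_{\bff\bfg\bfh}$ itself, extends to crystalline points at which the specializations may be $p$-new, and makes the local integrals at bad primes as simple as possible.

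Finally I would carry out the local analysis, and this is where the hard part lies. Specializing the $\Lambda$-adic pairing at $(k,\ell,m)\in\cW^g_{\bff\bfg\bfh}$ and matching with the integral representation, every factor agrees except the local zeta integrals. At the archimedean place, Garrett's explicit computation with holomorphic (lowest-weight) vectors yields precisely the product of $\Gamma_\C$-factors $\fa(k,\ell,m)$. At $p$, the $p$-depletion of $\ubff$ degenerates the local factor; combined with the ordinary projection and the passage from the $p$-stabilized norm $\langle g_\ell^\ast,g_\ell^\ast\rangle$ to $\langle g_\ell,g_\ell\rangle^2$, it produces the numerator $\mathcal{E}(f_k,g_\ell,h_m)^2$ and the denominator factor $\mathcal{E}_0(g_\ell)^2\mathcal{E}_1(g_\ell)^2$ --- here one must track the four eigenvalue products $\beta_{g_\ell}\alpha_{f_k}\alpha_{h_m},\ \beta_{g_\ell}\alpha_{f_k}\beta_{h_m},\ \beta_{g_\ell}\beta_{f_k}\alpha_{h_m},\ \beta_{g_\ell}\beta_{f_k}\beta_{h_m}$ together with the convention $\beta_\phi=0$ when $p$ divides the level. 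At the remaining finite primes the chosen test vectors force the normalized local integral to be $1$, except at the exceptional primes $q\in\Sigma_{\mathrm{exc}}$, where a residual $1+q^{-1}$ survives and assembles into $\prod_{q\in\Sigma_{\mathrm{exc}}}(1+q^{-1})$. Uniqueness of $\Lp$ follows because $\cW^g_{\bff\bfg\bfh}$ is Zariski-dense in $\cW_{\bff\bfg\bfh}$. I expect the main obstacle to be these explicit archimedean and $p$-adic local integral computations, which only collapse to the clean stated shape because of Hsieh's particular global choice of test vectors.
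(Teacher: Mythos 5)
This statement is not proved in the paper at all: it is Hsieh's Theorem A, imported verbatim with a citation to \cite{Hsieh}, so there is no in-paper argument to compare your sketch against. What you have written is an accurate architectural summary of how Hsieh's construction works --- Ichino-type integral representation of the central value, Hida's $\Lambda$-adic Petersson pairing of $\ubfg^\ast$ against $e_{\mathrm{ord}}(\ubff^{[p]}\times\ubfh)$, Gross--Prasad-style local test vectors, and local zeta integral evaluations at $\infty$, $p$, and the bad primes --- and you correctly locate where each displayed factor is supposed to originate.

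The genuine gap is that every step which actually produces the stated formula is deferred rather than carried out. The entire content of the theorem is in the explicit constants: that the archimedean integral with lowest-weight vectors equals exactly $\fa(k,\ell,m)$ and not some other ratio of $\Gamma$-factors; that the local integral at $p$, after ordinary projection and $p$-depletion, yields precisely $\mathcal{E}(f_k,g_\ell,h_m)^2/(\mathcal{E}_0(g_\ell)^2\mathcal{E}_1(g_\ell)^2)$ with the four eigenvalue products as listed (including the degenerate convention $\beta_\phi=0$ at $p$-new specializations, which is one of the points where Hsieh genuinely goes beyond \cite{DR1}); that the normalization constant is $(-4)^\ell$ and not, say, $(-2)^{2\ell-1}$; and that the surviving bad-prime contribution is exactly $\prod_{q\in\Sigma_{\mathrm{exc}}}(1+q^{-1})$. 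You assert all of these as outcomes of computations you do not perform, so the proposal is a roadmap rather than a proof. One further point worth making explicit even in a sketch: uniqueness does follow from Zariski density of $\cW^g_{\bff\bfg\bfh}$ in $\mathrm{Spf}(\Lambda_{\bff\bfg\bfh})$, but only because Hsieh's choice of test vectors guarantees $\L_p^g(\ubff,\ubfg,\ubfh)$ lies in the Iwasawa algebra $\Lambda_{\bff\bfg\bfh}$ rather than merely in $\Lambda_\bff\hat{\otimes}\mathrm{Frac}(\Lambda_\bfg)\hat{\otimes}\Lambda_\bfh$; that integrality is itself a nontrivial output of the local analysis, not a formal consequence of the construction.
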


\subsection{Kuga Sato varieties and the $p$-adic Abel--Jacobi map}\label{subsec: kuga sato}

\subsubsection{ Kuga--Sato varieties and modular forms}\label{sec: Modular forms and the cohomology of Kuga--Sato varieties}
Let $N>4$ be an integer and let $X_1(N)$ be the modular curve attached to the group $\Gamma_1(N)$. Let $\pi:\cE\rightarrow X_1(N)$ be the corresponding (generalized) universal elliptic curve and let $x$ be a non-cuspidal point of $X_1(N)$. Via the moduli interpretation of the modular curve, $x$ corresponds to the isomorphism class of a pair $(E_x,P_x)$ where $E_x$ is an elliptic curve of conductor $N$, and $P_x\in E_x$ is a point of exact order $N$. The fiber $\pi^{-1}(x)$ is isomorphic to the elliptic curve $E_x$. 

 The $r$-th \textit{Kuga--Sato variety} $W_r$ is the canonical desingularization of the $r$-th fibered product $\cE\times_{X_1(N)}\stackrel{(r)}{\cdots}\times_{X_1(N)}\cE$.
It is a variety of dimension $r+1$ defined over $\Q$. For a detailed description see \cite[Appendix]{BDP1}.

Let $f\in S_k(N,\chi)$ be a $p$-ordinary normalized cuspform of weight $k\geq2$ and field of Fourier coefficients $E_f$. Denote by $M_f$ the Chow  motive over $\Q$ and coefficients in $E_f$ attached to $f$ by Scholl \cite{Sch90}.  Recall that $M_f$ is given by the triple $(W_{k-2},e_f,0)$, where $e_f$ is a certain projector in the ring of correspondences of $W_{k-2}$, which is constructed from Hecke correspondences. By functoriality, $e_f$ acts on the different cohomological realizations of $M_f$ projecting onto the $f$-isotypical component. For example
\begin{align*}
  \coh^{k-1}_\etale((W_{k-2})_{\bar{\Q}},\Q_p)[f] =e_f\cdot\coh^{k-1}_\etale((W_{k-2})_{\bar{\Q}},\Q_p),
\end{align*}
which in fact is the $2$-dimensional $p$-adic Galois representation $V_f$ attached to $f$.

Similarly, we denote
\begin{align*}
  S_k(N)_L[f]=e_f\cdot S_k(N)_L,
\end{align*}
which is the projection onto the eigenspace of $f$ relative to the action of the Hecke operators $T_\ell$ with $(\ell,  N)=1$. 
For any number field $L$ containing $E_f$ the above $f$-isotypical component is isomorphic to a piece of the de Rham cohomology:
$$ S_k(N)_L[f] \simeq\fil^{k-1}\coh_\derham^{k-1}(W_{k-2}/L)[f],$$	
and we denote by $\omega_f$ the element of $\fil^{k-1}\coh_\derham^{k-1}(W_{k-2}/\C_p)$ corresponding to $f$ via the previous isomorphism and our chosen inclusion $L\subseteq \C_p$.

Assume that $\ord_p(N)\leq1$, so that the Kuga--Sato variety $W_{k-2}$ has good or semistable reduction at $p$ and let $\sigma_p$ denote a geometric Frobenius element at $p$.
The cohomology space $\coh_\derham^{k-1}(W_{k-2}/\C_p)[f]$ associated to $M_f$  has dimension $2$ over $\C_p$ and $\sigma_p$ acts on it.
Since $f$ is ordinary, there is a \textit{unit-root} subspace $$\coh_\derham^{k-1}(W_{k-2}/\C_p)[f]^\unitroot\subseteq \coh_\derham^{k-1}(W_{k-2}/\C_p)[f]$$ of dimension $1$ on which $\sigma_p$ acts as a $p$-adic unit. Define $\eta_f$ to be the unique element in the space $\coh_\derham^{k-1}(W_{k-2}/\C_p)[f]^\unitroot$  such that $\langle \eta_f,\omega_f \rangle=1$, where  $\langle\cdot,\cdot\rangle$ denotes the Poincare pairing on  $\coh_\derham^{k-1}(W_{k-2}/\C_p)$.

\subsubsection{The $p$-adic Abel--Jacobi map}
Let $W=W_r$ be the $r$-th Kuga--Sato variety of level $N$. We denote by $\CH^c(W)$ the Chow group of rational equivalence classes of codimension $c$ cycles on $W$, and by $\CH^c(W)_0$ the subgroup of classes of null-homologous cycles, i.e.\ the kernel of the cycle class map. If $K$ is an extension of $\Q$, we denote by $\CH^c(W/K)_0$ the group of null-homologous cycles defined over $K$. Also, if $L$ is a number field  we will denote by $\CH^c(W/K)_{0,L}$ the space $L\otimes_\Z \CH^c(W/K)_0$.

Fix a prime $\fp$ of $K$ above $p$ and denote by $W_{K_\fp}$ the base extension of $W$ to the completion $K_\fp$ of $K$ at $\fp$. As in \S\ref{sec: Modular forms and the cohomology of Kuga--Sato varieties}, we assume that $\ord_p(N)\leq 1$, so that $W_{K_\fp}$ has either good or semistable reduction. In both situations (cf. \cite[\S 3.4]{BDP1} for the good reduction case and \cite[\S 2]{Cas18} for the semistable case) for any $c\in \{0,\dots,r+1\}$ there exists a so called \emph{ $p$-adic Abel--Jacobi map}
 $$\AJ_p: \CH^c(W/K_\fp)_{0,\Q}\longrightarrow\fil^{c}\coh^{2c-1}_\derham(W_{K_\fp}/K_\fp)^\lor.$$
Here ${}^\vee$ denotes $K_\fp$-dual.

\begin{remark}\label{rmk:log}
The $p$-adic Abel--Jacobi map can be seen as a generalization of the formal group logarithm attached to a differential form on $X_1(N)$. Indeed, one has that $\log_{\omega_f}(P)=\AJ_p(P)(\omega_f)$ for any $P\in X_1(N)(\Q_p)$.   
\end{remark}

\subsection{The conjecture}\label{sec: the conjecture}
Let $f\in S_k(N_f,\chi_f), \ g\in M_1(N_g,\chi_g), \ h\in M_1(N_h,\chi_h)$ be three normalised eigenforms, with $k\geq2$ and $\chi_f\cdot\chi_g\cdot\chi_h=1$. Fix a prime number $p$ such that $p\nmid N_gN_h$ and $\ord_p(N_f)\leq 1$. Assume that $f$, $g$, and $h$ are $p$-ordinary, and set $N$ to be the prime-to-$p$-part of $\mathrm{lcm}(N_f,N_g,N_h)$. We begin this section by defining, under certain additional conditions, a regulator $\mathrm{Reg}(f,g_\alpha,h)$ which generalizes to weight $k\geq 2$ the one defined in \cite{DLR} for $k=2$, where we recall that $g_\alpha$ stands for the $p$-stabilization of $g$ such that $U_p g_\alpha = \alpha_gg$.

 Let $\rho_g$ (resp. $\rho_h$) denote the Artin representation attached to $g$ (resp. $h$), regarded as acting on a $2$-dimensional $L$-vector space $V_g$ (resp. $V_h$). Let also $\rho_{gh}$ denote the tensor product representation acting on $V_{gh}=V_g\otimes V_h$, and let $H$ be the field fixed by its kernel, so that we have:
 \begin{align*}
   \rho_{gh}\colon \Gal(H/\Q)\lra \Aut_L(V_{gh}).
 \end{align*}
We can assume, extending $L$ if necessary, that $L$ contains the Fourier coefficients of $f,g$ and $h$.
We denote by $H_p$ the completion of $H$ in $\Qbar_p$ induced from our fixed inclusion $\Qbar\subset \Qbar_p$. Since $p\nmid N_gN_h$, $H_p$ is unramified and we denote by $\sigma_p$ a geometric Frobenius.

The Elliptic Stark Conjecture of \cite{DLR} is formulated under a certain classicality hypothesis for $g$, labeled as Hypothesis C in loc.\ cit., which we will also assume. In fact, we will assume the following more explicit condition, labeled as Hypothesis C' in \cite{DLR}, which implies (and is presumably equivalent to) Hypothesis C.
\begin{assumption}\label{ass: hypothesis c'} The modular form $g$ satisfies one of the following conditions:
	\begin{enumerate}[\indent (1)]
		\item It is a cuspform \textit{regular} at $p$ (i.e.\ $\alpha_g\neq\beta_g$), and it is not the theta series of a character of a real quadratic field in which $p$ splits;
		\item  it is an Eisenstein form which is \textit{irregular} (i.e.\ $\alpha_g=\beta_g$).
	\end{enumerate}
\end{assumption}
This assumption allows for the definition of a $1$-dimensional $L$-subspace $V_g^\alpha$ of $V_g$ as in \cite[\S1]{DLR}:
	\begin{itemize}
		\item If $g$ satisfies the first condition, then the attached Artin representation $V_g$ decomposes as the direct sum  of the eigenspaces $V_g^\alpha,V_g^\beta$ with respect to the action of $\sigma_p$, with eigenvalues $\alpha_g\chi_g(p)^{-1}={\beta_g}^{-1},\beta_g\chi_g(p)^{-1}=\alpha_g^{-1}$ respectively. 
		\item If $g$ is an irregular Eisenstein form, we take $V_g^\alpha$ to be any $1$-dimensional subspace of $V_g$ which is not stable under the action of $G_{\Q}$.
	\end{itemize}	
From now on, we will also assume the following assumption on the local signs of $L(f\otimes g\otimes h,s)$.
\begin{assumption}\label{ass: local signs + vanishing at central point}
	The local signs $\epsilon_v(f,g,h)$ of (\ref{eq: sign of the functional equation}) at finite primes $v\mid \mathrm{lcm}(N_f,N_g,N_h)$ are $+1$.
\end{assumption}
Note that, since the weights $(k,1,1)$ of the triple $(f,g,h)$ are unbalanced, then $\epsilon_\infty(f,g,h)=+1$. Hence the global sign  is $+1$ and the order of vanishing of $L(f\otimes g\otimes h,s)$ at the central point $k/2$ is even. In order to simplify a bit the notation for the $f$-isotypical component, we put
\begin{align*}
  \CH^{k/2}(W_{k-2}/H)_{0,L}^{[f]}:= e_f\cdot \CH^{k/2}(W_{k-2}/H)_{0,L}.
\end{align*}
 As we mentioned in \S \ref{sec: intro}, a conjecture due to Beilinson predicts that
\begin{align*}
\ord_{s=k/2}L(f\otimes g\otimes h,s) \stackrel{?}{=} \dim_L\Hom_{G_\Q}(V_{gh},\CH^{k/2}(W_{k-2}/H)_{0,L}^{[f]}).
\end{align*}
If $\dim_L\Hom_{G_\Q}(V_{gh},\CH^{k/2}(W_{k-2}/H)_{0,L}^{[f]})= 2$, then we can define a regulator by fixing  an $L$ basis  $(\Phi_1,\Phi_2)$ of this space and an $L$-basis $(v_1,v_2)$ of $V_{gh}^\alpha:=V_g^\alpha\otimes V_h$ as follows. 
\begin{definition}
	The \textit{regulator} attached to the triple $(f,g_\alpha,h)$ is 
	\begin{align*}\mathrm{Reg}(f,g_\alpha,h):=\det\mtx{\AJ_p(\Phi_1(v_1))(\omega_f)}{\AJ_p(\Phi_1(v_2))(\omega_f)}{\AJ_p(\Phi_2(v_1))(\omega_f)}{\AJ_p(\Phi_2(v_2))(\omega_f)},\end{align*}
where $\omega_f\in\coh_\derham(W_{k-2}/\C_p)[f]$ is the class defined in \S \ref{sec: Modular forms and the cohomology of Kuga--Sato varieties}.
\end{definition}
Observe that, by Remark \ref{rmk:log}, when $k=2$ we recover the regulator as defined in \cite{DLR}.

Let $\mathrm{Ad}_g$ be the adjoint representation of $\rho_g$ and let $H_g$ be the field fixed by its kernel. Let $u_{g_\alpha}\in \left( L\otimes_\Z (\cO_{H_g}[1/p]^\times)\right)^{\mathrm{Ad_g}}$ be the Stark unit defined in \cite[\S 1.2]{DLR}, on which $\sigma_p$ acts with eigenvalue $\alpha_g/\beta_g$. The following is a generalization to weights $k\geq 2$ of \cite[Conjecture ES]{DLR}. In the statement we use the following notation: If $\phi$ is a modular form of weight $w$ and level $Np$ which is an eigenform for the good Hecke operators then $M_w(Np)[\phi]$ denotes the isotypical subspace of $M_k(N)$ defined as
\begin{align*}
  M_w(Np)[\phi] = \{\breve\phi\in M_w(Np)\colon T_\ell\breve\phi = a_\ell(\phi) \breve \phi\text{ for all }\ell\nmid Np\}.
\end{align*}
If $\phi$ happens to be also an eigenform for $U_p$, then $M_w(Np)[\phi]$ will be understood as the isotypical subspace associated to the good Hecke operators and to $U_p$ as well.  
\begin{conjecture}\label{conj: general weights} Let $\bff,\bfg,\bfh$ be the Hida families passing through the $p$-stabilisations $f_\alpha, g_\alpha, h_\alpha$ of $f,g$ and $h$. Set $r = \dim_L\Hom_{G_\Q}(V_{gh},\CH^{k/2}(W_{k-2}/H)_{0,L}^{[f]})$.
	\begin{enumerate}[\indent $i)$]
		\item If $r> 2$, then $\L^g_p(\ubff,\ubfg,\ubfh)(k,1,1)=0$ for any choice of test vectors $(\ubff,\ubfg,\ubfh)$ for $(\bff,\bfg,\bfh)$;
		\item if $\ord_{s=k/2}L(f\otimes g\otimes h,s)=2$, then there exist a triple of test vectors
\begin{align*}
(\breve f, \breve g_\alpha, \breve h)\in S_k(Np, \chi_f)_L[f]\times M_1(Np,\chi_g)_L [g_\alpha]\times M_1(Np,\chi_h)_L[h]  
\end{align*}
and Hida families $\ubff,\ubfg,\ubfh$ with $f_k = \breve f$, $g_1 = \breve g_\alpha$, $h_1 = \breve h$,  such that
		\begin{equation}\label{eq: conjecture (k,1,1)}
		\L^g_p(\ubff,\ubfg,\ubfh)(k,1,1)=\dfrac{\mathrm{Reg}(f,g_\alpha,h)}{\mathfrak{g}(\chi_f)\log_p(u_{g_\alpha})},
		\end{equation}  
		where $\mathfrak{g}(\chi_f)$ denotes the Gauss sum of the character $\chi_f$.
	\end{enumerate}
\end{conjecture}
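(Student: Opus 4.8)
\textbf{Proof proposal for Conjecture \ref{conj: general weights}.} Since this is a conjecture, a complete proof is not expected; rather, the plan is to establish it unconditionally in the special case where $g = \theta_{\psi_1}$ and $h = \theta_{\psi_2}$ are theta series of the same imaginary quadratic field $K$ in which $p$ splits, following the blueprint of \cite[\S3.2]{DLR} and extending it to higher weight $k$. First I would record that when $g$ and $h$ are such theta series, the tensor product $V_{gh}$ decomposes as a direct sum of four induced characters $V_{\psi}$, with $\psi$ ranging over $\psi_1\psi_2$, $\psi_1\psi_2'$, $\psi_1'\psi_2$, $\psi_1'\psi_2'$ (primes denoting conjugates), so that $L(f\otimes g\otimes h,s)$ factors as a product of four Rankin $L$-values $L(f,\psi,s)$ of $f$ twisted by Hecke characters of $K$. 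Correspondingly, the hypothesis $\ord_{s=k/2}L(f\otimes g\otimes h,s)=2$ forces exactly two of these factors to vanish to order one and the other two to be nonzero; the two-dimensionality of the relevant Hom-space and the structure of the basis $(\Phi_1,\Phi_2)$, $(v_1,v_2)$ are then governed by Heegner cycles on $W_{k-2}$ over the anticyclotomic tower of $K$, whose Abel--Jacobi images are computed by \cite{BDP1}.

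The technical heart is a \emph{factorization formula} for the $p$-adic $L$-function $\L_p^g(\ubff,\ubfg,\ubfh)(k,1,1)$ as a product (up to explicit periods and Euler factors) of a Katz $p$-adic $L$-function of $K$ and $p$-adic Rankin $L$-functions $\L_p(f,\psi)$. The cleanest route, and the one that genuinely simplifies the arguments of \cite{DLR} and \cite{CR}, is to exploit Hsieh's interpolation formula (Theorem \ref{thm: Hsieh interpolation formula}): on the region $\cW_{\bff\bfg\bfh}^g$ both sides are rigid-analytic functions interpolating classical central values, and by the Artin formalism these classical values factor as products of Rankin--Selberg central values $L(f_k,\psi_\ell,\text{center})$ when $\bfg,\bfh$ are the CM Hida families attached to $\psi_1,\psi_2$; since $\cW_{\bff\bfg\bfh}^g$ is Zariski-dense, the identity of analytic functions follows, and one then specializes it at the point $(k,1,1)\in\Sigma^f$ which lies outside $\cW_{\bff\bfg\bfh}^g$. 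Here one must be careful with the precise normalization of periods (the complex and $p$-adic CM periods of $K$, the Petersson norm $\langle g_\ell,g_\ell\rangle$, the Gauss sum $\mathfrak{g}(\chi_f)$) and with the Euler factors $\cE,\cE_0,\cE_1$ degenerating at the boundary point; matching these is where the weight-$k$ bookkeeping differs from \cite{DLR} and demands the most care.

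Once the factorization is in place, I would invoke the main theorem of \cite{BDP1} (or the generalized-Heegner-cycle version of \cite[\S4.1]{BDP2}) twice: each of the two vanishing Rankin factors contributes, via the Katz--BDP $p$-adic Gross--Zagier formula, a $p$-adic Abel--Jacobi image $\AJ_p(\Delta_\psi)(\omega_f)$ of a (generalized) Heegner cycle $\Delta_\psi \in \CH^{k/2}(W_{k-2}/H)_{0,L}$, while the two nonvanishing factors and the Katz $p$-adic $L$-function combine to produce exactly $\log_p(u_{g_\alpha})$ in the denominator and the Gauss sum $\mathfrak{g}(\chi_f)$. Re-expressing the resulting product of two $\AJ_p$-values as the determinant $\mathrm{Reg}(f,g_\alpha,h)$ requires identifying the Heegner cycles $\Delta_\psi$ with the values $\Phi_i(v_j)$ of the Hom-basis on the basis of $V_{gh}^\alpha$ --- this is an exercise in tracking the $\sigma_p$-eigenvalue decomposition ($\sigma_p$ acting on $V_g^\alpha$ with eigenvalue $\beta_g^{-1}$, hence on $V_{gh}^\alpha = V_g^\alpha\otimes V_h$ with eigenvalues $\alpha_g\alpha_h$, $\alpha_g\beta_h$), exactly as in \cite{DLR} but now carried out inside $\coh^{k-1}_\derham(W_{k-2}/\C_p)[f]$ rather than $E(H_p)\otimes L$. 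For part $i)$, when $r>2$ the same factorization shows that at least three of the four Rankin factors vanish at the center, forcing an extra zero of $\L_p^g$ at $(k,1,1)$ beyond what the regulator can see; alternatively one argues that the $2$-dimensional space $V_{gh}^\alpha$ cannot pair nontrivially with a Hom-space of dimension $>2$, so the relevant determinant-type pairing degenerates. The main obstacle, and the step I expect to consume most of the work, is the precise period and Euler-factor matching in the factorization formula at the boundary point $(k,1,1)$: the interpolation formulas of \cite{DR1}, \cite{Hsieh}, \cite{BDP1} and Katz are each normalized differently, and reconciling them in weight $k>2$ --- keeping track of powers of $p$, of $2\pi i$, of $\mathfrak{g}(\chi_f)$, and of the CM periods --- is delicate and has no real shortcut.
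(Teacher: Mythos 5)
Your overall strategy is the one the paper follows: since this is a conjecture, the evidence offered is a proof of the case where $g$ and $h$ are theta series of the same imaginary quadratic field $K$ in which $p$ splits, obtained by proving a factorization of $\L_p^g(\ubff,\ubfg,\ubfh)$ into BDP--Castell\`a $p$-adic Rankin $L$-functions and a Katz $p$-adic $L$-function via Hsieh's interpolation formula on the Zariski-dense region $\cW_{\bff\bfg\bfh}^g$, and then specializing at $(k,1,1)$ and invoking \cite{BDP1} and Katz's elliptic-unit formula (this is Theorems \ref{thm: factorisation of padic Lfunctions} and \ref{thm: special case weight (k,1,1)}). However, your bookkeeping of the decomposition is wrong in a way that propagates through the rest of the argument. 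The tensor product of the two induced representations decomposes as $V_{gh}=V_{\psi_g\psi_h}\oplus V_{\psi_g\psi_h'}$, i.e.\ into \emph{two} $2$-dimensional induced pieces (note $\mathrm{Ind}(\psi_g\psi_h)\cong\mathrm{Ind}(\psi_g'\psi_h')$), not four; listing four induced characters would give an $8$-dimensional space. Consequently $L(f\otimes g\otimes h,s)$ factors as a product of exactly two Rankin factors $L(f/K,\psi_1,s)L(f/K,\psi_2,s)$, both of which have global sign $-1$ and hence both vanish at $s=k/2$; the rank-$2$ hypothesis means each vanishes to order exactly one. There is no dichotomy of ``two vanishing and two nonvanishing'' factors.

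This matters because your account of where the denominator $\mathfrak{g}(\chi_f)\log_p(u_{g_\alpha})$ comes from is then structurally incorrect: it is produced entirely by the Katz $p$-adic $L$-function evaluated at the finite-order self-dual character $\psi_g'/\psi_g$ (via the functional equation and Theorem \ref{thm: katz padic Lfunction and elliptic units}), together with a Gauss-sum bookkeeping for the BDP factors $\omega(f_k,\psi)$ handled by Atkin--Li; it does \emph{not} come from ``nonvanishing Rankin factors.'' Both Rankin factors are evaluated at characters lying in $\Sigma^{(1)}$, outside the BDP interpolation region, and each one contributes one Abel--Jacobi term $\AJ_p(\tilde\Delta^{\bar\psi_i}_{k-2,0,c_i})(\omega_f)$ regardless of vanishing. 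Similarly, your argument for part $i)$ (``at least three of the four factors vanish'') rests on the erroneous four-fold decomposition; in fact the paper does not establish part $i)$ even in the CM case, and the passage from the product of Abel--Jacobi values to the determinant $\mathrm{Reg}(f,g_\alpha,h)$ requires the additional expected input that the Heegner cycles generate the relevant Hom-spaces (known for $k=2$, and for $k>2$ only under nondegeneracy of the height pairing). With the decomposition corrected to two factors, the rest of your plan --- period and Euler-factor matching, and the $\sigma_p$-eigenvalue tracking inside $\coh^{k-1}_{\derham}(W_{k-2}/\C_p)[f]$ --- is the right one.
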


\begin{remark}
	Note that, since $\L^g_p(\ubff,\ubfg,\ubfh)(k,1,1)$ belongs to $L_p\subset H_p\otimes L$, the geometric Frobenius $\sigma_p$ acts trivially on the left-hand part of \eqref{eq: conjecture (k,1,1)}. On the other hand, $\sigma_p(\log_p(u_{g_\alpha}))=\alpha_g/\beta_g$ and by definition of the space $V_{gh}^\alpha$, the element $\sigma_p$ acts on the regulator as multiplication by $(\beta_g\beta_h)^{-1}\cdot(\beta_g\alpha_h)^{-1}$; finally,  $\sigma_p$ acts on the Gauss sum with eigenvalue $\chi_f(p)$. So $\sigma_p$ also acts trivially on the right hand side of \eqref{eq: conjecture (k,1,1)} as $\beta_g\alpha_g\beta_h\alpha_h\chi_f(p)=\chi_g(p)\chi_h(p)\chi_f(p)=1.$
\end{remark}

 \section{A particular case of the Elliptic Stark Conjecture in weights $(k,1,1)$}\label{sec: factorization formula}
The goal of this section is to provide theoretical evidence in support of Conjecture \ref{conj: general weights}  in the particular case where $g$ and $h$ are theta series of the same imaginary quadratic field in which $p$ splits. The main result that we prove is Theorem \ref{thm: special case weight (k,1,1)}, which relates the triple product $p$-adic $L$-function in this setting with the $p$-adic Abel--Jacobi image of certain Heegner cycles. We begin by reviewing Heegner cycles in \S\ref{sec: Generalised Heegner cycles}  (and, in fact, we will describe the so called generalized Heegner cycles introduced in the works of Bertolini--Darmon--Prasanna, since these more general cycles will appear in \S \ref{sec: general unbalanced weights}). In \S \ref{sec: statement of the particular case} we particularize the Elliptic Stark Conjecture to the case of theta series of imaginary quadratic fields, and we state the main result. The proof is given in \S \ref{sec: a factorization formula}, and it follows from a factorization formula for the triple product $p$-adic $L$-function in this case. The definition and the main properties of the $p$-adic $L$-functions involved are recalled in \S\ref{sec: background on p-adic L-functions}.

We fix from now on an imaginary quadratic field $K$ of discriminant $-D_K$. We denote by $h_K$ its class number and by $\cO_K$ its ring of integers. 

 \subsection{Generalised Heegner cycles}\label{sec: Generalised Heegner cycles}  
 Let $N$ be a squarefree positive integer coprime to $D_K$. From now on we will assume the following \textit{Heegner Hypothesis} for the pair $(K,N)$.
 \begin{assumption}\label{ass: HH}
There exists an ideal $\cN$ of $\cO_K$ coprime to $D_K$ such that $\cO_K/\cN\cong\Z/N\Z$.
 \end{assumption}
 
 Fix an elliptic curve $A$ over the Hilbert class field of $K$ and with complex multiplication by $\cO_K$, and a generator $t$ of $A[\cN]$ so that the pair $(A,t)$ corresponds to a point $P$ on the modular curve $X_1(N)$. 
 In \cite{BDP1} and \cite{BDP2}, Bertolini, Darmon and Prasanna constructed a family of so called \textit{generalised Heegner cycles} in the product of a Kuga--Sato variety with  a power of $A$. As we will recall in \S\ref{sec: p-adic rankin L-function}, these cycles are related to special values of a $p$-adic $L$-function, and we will use this relation in \S\ref{sec: proof of the special case} to prove a special case of Conjecture \ref{conj: general weights}. We now briefly recall the definition of the cycles.
 
 Let $c$ be positive integer coprime to $N D_K$, and let $\cO_c:=\Z + c\cdot\cO_K$  be the order of $K$ of conductor $c$.  Let $A_c:=\C/\cO_c$ be an elliptic curve with complex multiplication by $\cO_c$, which we can assume is defined over the ring class field $K_c$ of $K$ of conductor $c$. Let $\phi_c:A\longrightarrow A_c$ be an isogeny of degree $c$.
 Given an ideal $\fa$ of $\cO_c$ prime to $\cN_c:=\cN\cap\cO_c$, denote by $A_\fa$ the elliptic curve $\C/\fa^{-1}$ and by $\phi_\fa$ the isogeny  $$\phi_\fa:A_c\longrightarrow A_\fa.$$ 
 The isogeny $\phi_\fa\circ\phi_c$ defines a $\Gamma_1(N)$-level on $A_\fa$, i.e.\ a point $t_\fa:=\phi_\fa\circ\phi_c(t)$ of exact order $\cN_c$.  
 
 Let $r_0\geq r_1$ be two non-negative integers with the same parity, set $s:=\frac{r_0+r_1}{2}$, $u:=\frac{r_0-r_1}{2}$ and let 
$$X_{r_0,r_1}:=W_{r_0}\times A^{r_1}.$$ It is a variety of dimension $r_0+r_1+1=2s+1$ defined over the Hilbert class field $K_1$ of $K$. 

 Let $$\pi:X_{r_0,r_1}\overset{p_1}{\longrightarrow}W_{r_0}\longrightarrow X_1(N)$$ be the composition of the projection on the first component of $X_{r_0,r_1}$ with the canonical map of the Kuga--Sato variety onto $X_1(N)$. For each ideal $\fa$ of $\cO_c$ prime to $\cN_c$, the fiber of the point $P_\fa=(A_\fa,t_\fa)$ is $$\pi^{-1}(P_\fa)\cong A_\fa^{r_0}\times A^{r_1}\cong(A_\fa\times A)^{r_1}\times(A_\fa\times A_\fa)^u.$$
 
Write $\mathrm{End}(A_\fa)$ as $\Z[\frac{d_c+\sqrt{d_c}}{2}]$, where we regard $\sqrt{d_c}$ as an endomorphism of the curve, and define $\Gamma_\fa:=(\mathrm{Graph}(\sqrt{d_c}))^{\mathrm{tr}}\subset A_\fa\times A_\fa$, where tr denotes the transpose. Let also $\Gamma_{c,\fa}:=\mathrm{Graph}(\phi_\fa\circ\phi_c)^{\mathrm{tr}}$, which is a cycle in $A_\fa\times A$, and $\Gamma_{r_0,r_1,c,\fa}:=\Gamma_{c,\fa}^{r_1}\times\Gamma_\fa^u$, which is a cycle of codimension $s+1$ in $X_{r_0,r_1}$ supported on the fiber $\pi^{-1}(P_\fa)$. The \emph{generalised Heegner cycle} attached to the data $r_0,r_1,\mathfrak a,c$ is defined as $$\Delta_{r_0,r_1,c,\fa}:=\epsilon_{X_{r_0,r_1}}(\Gamma_{r_0,r_1,c,\fa})\in\CH^{s+1}(X_{r_0,r_1})_{0,\Q},$$  where $\epsilon_{X_{r_0,r_1}}$ is the projector defined in \cite[\S 4.1]{BDP2}.

Let now $f\in S_{r_0+2}(N,\chi_f)$ be a modular form. We want to consider the projection to the ``$(f,\psi)$-component'' of these cycles, for certain Hecke characters $\psi$. Recall that a \textit{Hecke character} of $K$ of infinity type $(\ell_1,\ell_2)\in \Z^2$ is a continuous homomorphism $\psi:\A^\times_K\rightarrow\C^\times$
 such that $$\psi(\alpha\cdot x\cdot x_\infty)=\psi(x)\cdot x_\infty^{-\ell_1}\cdot\bar{x}_\infty^{-\ell_2},\ \text{ for all } \alpha\in K^\times,x \in \A_K^\times,\text{ and } x_\infty \in \C.$$ 
The \emph{conductor} of $\psi$ is the largest ideal $\fc_\psi$ of $K$ such that, for each prime ideal $\fq$ of $K$, the $\fq$-component of $\psi$ is trivial when restricted to $1+\mathfrak{c}_\psi\cO_{K_\fq}$.
The \textit{central character} of $\psi$ is the Dirichlet character  $\varepsilon_\psi:=\psi_{|\A_\Q^\times}\cdot \norm^{-\ell_1-\ell_2}$, where $\norm$ stands for the norm character.

Let $\varepsilon$ be a Dirichlet character of conductor $N_\varepsilon\mid N_f$ and let $\cN_\varepsilon$ be the ideal of $\cO_K$ such that $\cN_\varepsilon\mid\cN_f$ and $\cO_K/\cN_\varepsilon\cong\Z/N_\varepsilon\Z$.
Let $c$ be a positive integer such that $\gcd(c,N_f\cdot D_K)=1$, and let $U_c:=\hat{\cO}_c^\times=\prod_p(\cO_c\otimes\Z_p)^\times$.
A Hecke character $\psi$ of $K$ is \textit{of finite type $(c,\cN_f,\varepsilon)$} if the conductor of $\psi$ is $c\cN_\varepsilon$ and if, denoting $\cN_{c,\varepsilon}:=\cN_\varepsilon\cap\cO_c$, the restriction of $\psi$ to $U_c$ coincides with the  composition
$$U_c=\hat{\cO}_c^\times\longrightarrow\Big(\hat{\cO}_c/\cN_{c,\varepsilon}\hat{\cO}_c \Big)^\times\cong\Big(\cO_K/\cN_\varepsilon\cO_K \Big)^\times\cong\Big(\Z/N_\varepsilon\Z \Big)^\times\xrightarrow{\varepsilon^{-1}}\C^\times.$$  
Let $\psi$ be a Hecke character of $K$ of infinity type $(r_1-j,j)$ for some $0\leq j\leq r_1$ and such that $\varepsilon_\psi=\chi_f$. The condition on the central character implies that the complex $L$-function $L(f,\psi^{-1},s)$ is selfdual. Let $\cN_{\chi_f}$ be the ideal of $K$ dividing $\cN$ whose norm equals the conductor of $\chi_f$, and suppose that the conductor of $\psi$ is of the form $\mathfrak{c}_\psi=c \cN_{\chi_f}$, for some $c\in\Z$ coprime to $N$. 

By \cite[\S 4.2]{BDP2} the cycle $\Delta_{r_0,r_1,c,\fa}$ is then defined over the number field $F$ that corresponds by class field theory to the subgroup $K^\times W\subseteq \A_K^\times$, where 
\begin{align*}
  W = \{x\in \A_K^\times \colon x\cO_c =\cO_c ,\, xt = t\}.
\end{align*}
Arguing as in \cite[Display (4.2.1)]{BDP2} we find that $\Gal(F/K_c)\simeq (\Z/N\Z)^\times/\{\pm 1\}$. Define $H_{c,f}$ to be the subextension of $F/K$ that corresponds to $\ker \chi_f$ under this isomorphism and we let
\begin{align}\label{eq: gen heegner cycle}
  \tilde \Delta_{r_0,r_1,\mathfrak a, c} :=  \frac{1}{[F\colon H_{c,f}]}\operatorname{Tr}_{F/H_{c,f}}( \Delta_{r_0,r_1,\mathfrak a, c}).
\end{align}
 Finally, following \cite[Definition 4.2.3]{BDP2} define the following cycle:
  \begin{align}\label{eq: nuestro ciclo de heegner generalizado}
    \tilde \Delta_{r_0,r_1,c}^\psi:= e_f\left(\sum_{\mathfrak a \in S} \psi(\mathfrak a)^{-1}\cdot  \tilde \Delta_{r_0,r_1,\mathfrak a, c}\right),
  \end{align}
where $S$ is a set of representatives for $\Pic(\cO_c)$ that are prime to $c\cdot \cN$. This definition might depend on the choice of $S$, but its image under the $p$-adic Abel--Jacobi does not by \cite[Remark 4.2.4]{BDP2}. Observe that $\tilde \Delta_{r_0,r_1,c}^\psi$ belongs to $\CH^{s+1}(X_{r_0,r_1}/H_{c,f})_{0,\Q(\psi)}$, where $\Q(\psi)$ denotes the number field generated by the values of $\psi$.

\subsection{The case of theta series of imaginary quadratic fields}\label{sec: statement of the particular case} Let $f\in S_k(N_f,\chi_f)$ be a normalised newform of weight $k\geq2$. Suppose that the level $N_f$ is squarefree, coprime to $D_K$ and that satisfies the Heegner Hypothesis for $K$ (cf. \ref{ass: HH}). Fix from now on an ideal $\cN_f$ of $\cO_K$ of norm $N_f$ and let $\cN_{\chi_f}\mid\cN_f$ be the ideal whose norm is the conductor of $\chi_f$. Let also $\psi_g,\psi_h\colon \A_K^\times\ra \C^\times$ be finite order Hecke characters of conductors $\fc_g,\fc_h$ and central characters $\varepsilon_{g}, \varepsilon_h$, and let $g$ and $h$ be the theta series attached to $\psi_g$ and $\psi_h$, respectively. They are weight $1$ modular forms with Nebetypus; more precisely, if $\chi_K$ denotes the quadratic character attached to $K$, then
$$g\in M_1(N_g,\chi_g) \ \ \text{and} \ \ h\in M_1(N_h,\chi_h),$$ where
$$N_g=D_K\cdot\norm_{K/\Q}(\fc_g), \  \ N_h=D_K\cdot\norm_{K/\Q}(\fc_h), \ \ \chi_g=\chi_K\cdot\varepsilon_{g}, \ \ \chi_h=\chi_K\cdot\varepsilon_{h}.$$
Fix a prime number $p\geq 3$ that splits in $K$ as $p\cO_K=\p\bar{\p}$. Suppose that $p\nmid N_g\cdot N_h$ and $\ord_p(N_f)\leq1$ and suppose that $f, g$ and $h$ are $p$-ordinary. We assume that 
\begin{align}
  \label{eq:characters}
\chi_f\cdot\chi_g\cdot\chi_h=1, 
\end{align}
so that we are in the setting of \S\ref{sec: the conjecture}. In this section we are interested in Conjecture \ref{conj: general weights} for the modular forms $f,g$, and $h$ we just defined. Recall that the field of coefficients $L$ can be taken to be the field generated by the Fourier coefficients of $f$, $g$, and $h$.

Let $\psi_h'$ be the Hecke character defined by  $\psi_h'(\sigma)=\psi_h(\sigma_0\sigma\sigma_0^{-1})$ for $\sigma\in G_\Q$, where $\sigma_0$ is any lift of the non-trivial involution of $K/\Q$. Define also the characters
\begin{align}
  \label{eq:psi1psi2}
  \psi_1 := \psi_g\psi_h,\ \ \psi_2 := \psi_g\psi_h'.
\end{align}

Condition \eqref{eq:characters} implies that ${\psi_1}_{|\A_\Q^\times}={\psi_2}_{|\A_\Q^\times}=\chi_f^{-1}$ and that the conductor of $\psi_i$ is of the form $c_i\cN_i$, where $\cN_i\mid\cN_{\chi_f}$ and $c_i$ is an integer coprime to $\cN_{\chi_f}$.
We will assume from now on that $\cN_i=\cN_{\chi_f}$ for $i=1,2$. We further assume that $\psi_i$ has finite type $(c_i,\cN_f,\chi_f)$.

 In this setting, by looking at the Euler factors one checks that there is a decomposition of Artin representations
\begin{align}\label{eq:dec of artin reps}
  V_{gh}=V_{\psi_1}\oplus V_{\psi_2},
\end{align}
which in turn induces a factorization of $L$-functions
\begin{equation}\label{eq: factorisation of complex L functions weights (k,1,1)}
L(f\otimes g\otimes h,s)=L(f/K,\psi_1,s)L(f/K,\psi_2,s).
\end{equation}
The conditions imposed so far imply that all the finite local signs of $L(f/K,\psi_i,s)$ are $+1$, so in particular Assumption \ref{ass: local signs + vanishing at central point} is satisfied. Moreover, the global sign of $L(f/K,\psi_i,s)$ is $-1$, so that $L(f/K,\psi_i,s)$  vanishes at the central point $s=k/2$ and therefore $\ord_{s=k/2}L(f\otimes g\otimes h,s) \geq 2.$

Thanks to our assumptions on the characters $\psi_1$ and $\psi_2$, we can speak of the cycles $\tilde \Delta_{k-2,0,c_i}^{\bar\psi_i}$ as defined in Section \ref{sec: Generalised Heegner cycles}. Observe that in this particular case in which $r_1=0$, these are in fact classical Heegner cycles, as defined in \cite{Nekovar}. If we let $H_i$ be the field denoted as $H_{c_i,\chi_f}$ in \S\ref{sec: Generalised Heegner cycles}, then we have that
 \begin{align*}
 \tilde \Delta_{k-2,0,c_i}^{\bar\psi_i}\in \CH^{k/2}(W_{k-2}/H_i)_{0,L}^{[f]}.
 \end{align*}
Since we want to view the two cycles as being defined over the same field, we set $H:=H_1\cdot H_2$, the composition of $H_1$ and $H_2$, and $c:=\mathrm{lcm}(c_1,c_2)$. In this way, the cycle $\tilde \Delta_{k-2,0,c_i}^{\bar\psi_i}$ belongs to $\CH^{k/2}(W_{k-2}/H)_{0,L}^{[f]}$.

The following is the main result of this section. It is the generalization to weights  $k\geq 2$ of \cite[Theorem 3.3]{DLR}.

\begin{theorem}\label{thm: special case weight (k,1,1)}
Let $\bff$, $\bfg$, $\bfh$ be Hida families passing through $f_\alpha$, $g_\alpha$ and $h_\alpha$. Let $(\ubff,\ubfg,\ubfh)$ be the triple of test vectors of Theorem \ref{thm: Hsieh interpolation formula}. There exists a quadratic extension $L_0$ of $L$ and $\lambda\in L_0^\times$ such that
	$$\L_p^g(\ubff,\ubfg,\ubfh)(k,1,1)=\lambda\, \frac{\mathrm{AJ}_p(\tilde \Delta_{k-2,0,c_1}^{\bar\psi_1})(\omega_f)\mathrm{AJ}_p(\tilde \Delta_{k-2,0,c_2}^{\bar\psi_2})(\omega_f)}{\mathfrak{g}(\chi_f)\log_p(u_{g_\alpha})} .$$
\end{theorem}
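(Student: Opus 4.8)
The plan is to mimic the strategy of \cite[\S3.2]{DLR}, replacing the elliptic curve $E$ by the motive $M_f$ and the Heegner points by Heegner cycles: first establish a factorization of the triple product $p$-adic $L$-function into a product of two anticyclotomic $p$-adic Rankin $L$-functions $\L_p(f,\psi_i)$, and then invoke the main theorem of \cite{BDP1} (in the form valid for Heegner cycles, $r_1 = 0$) to rewrite each factor as an Abel--Jacobi image $\AJ_p(\tilde\Delta_{k-2,0,c_i}^{\bar\psi_i})(\omega_f)$, up to explicit algebraic constants and the factor $\mathfrak{g}(\chi_f)\log_p(u_{g_\alpha})$.

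First I would set up the factorization. Starting from Hsieh's interpolation formula (Theorem \ref{thm: Hsieh interpolation formula}), I would consider a modified ``square'' of $\L_p^g(\ubff,\ubfg,\ubfh)$ viewed as a function of the single weight variable $k$ (specializing $\bfg,\bfh$ to the weight-one families passing through $g_\alpha,h_\alpha$). Using the decomposition of Artin representations $V_{gh} = V_{\psi_1}\oplus V_{\psi_2}$ from \eqref{eq:dec of artin reps} and the factorization of complex $L$-functions \eqref{eq: factorisation of complex L functions weights (k,1,1)}, the central-value interpolation identity becomes a product of two copies of the Bertolini--Darmon--Prasanna interpolated quantity; since both sides are $p$-adic analytic functions of $k$ agreeing on the dense interpolation range, they agree identically. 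After taking square roots (which is where the quadratic extension $L_0/L$ and the unit $\lambda\in L_0^\times$ enter, exactly as in \cite{DLR}), one gets a clean identity
\[
\L_p^g(\ubff,\ubfg,\ubfh)(k,1,1) = \lambda\,\frac{\L_p(f,\psi_1)(k)\,\L_p(f,\psi_2)(k)}{(\ast)},
\]
where $(\ast)$ collects the Euler factors $\mathcal{E}(f_k,g_1,h_1)$, $\mathcal{E}_0(g_1)$, $\mathcal{E}_1(g_1)$, the Petersson norm $\langle g_1,g_1\rangle$, and a Katz $p$-adic $L$-function evaluated at a point inside its interpolation range; the point is that by the main theorem of \cite[BDP1]{BDP1} (Gross--Koblitz / Gross factorization) this whole denominator is, up to an algebraic scalar, precisely $\mathfrak{g}(\chi_f)\log_p(u_{g_\alpha})$ — this is the same computation that in \cite{DLR} identifies $\langle g_\alpha,g_\alpha\rangle$ with $\log_p(u_{g_\alpha})$ times a Katz $L$-value, and the generalization to nontrivial $\chi_f$ is where the Gauss sum $\mathfrak{g}(\chi_f)$ materializes.

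Next I would plug in the BDP formula for each factor. The $p$-adic Rankin $L$-function $\L_p(f,\psi_i)$ interpolates (the square of) central critical values of $L(f/K,\psi_i\cdot\nu,s)$ for Hecke characters $\nu$ of higher infinity type, and outside that range — in particular at the point corresponding to our fixed $\psi_i$ of infinity type $(0,0)$, where $L(f/K,\psi_i,k/2) = 0$ by the sign computation — the BDP formula (the $r_1=0$ case of \cite[\S4]{BDP2}) gives $\L_p(f,\psi_i)(k) \doteq \AJ_p(\tilde\Delta_{k-2,0,c_i}^{\bar\psi_i})(\omega_f)$, up to an explicit algebraic constant depending on $D_K$, $h_K$, $c_i$ and the period ratios. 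Assembling these two substitutions into the factorization and absorbing all algebraic constants (together with $\lambda$) into a single $\lambda\in L_0^\times$ yields the claimed formula. I would be careful that the Abel--Jacobi images land in $\CH^{k/2}(W_{k-2}/H)_{0,L}^{[f]}$ as arranged in \S\ref{sec: statement of the particular case} by passing to the compositum $H = H_1 H_2$ and $c = \mathrm{lcm}(c_1,c_2)$, and that the choice of representatives $S$ for $\Pic(\cO_{c_i})$ does not affect the Abel--Jacobi image, by \cite[Remark 4.2.4]{BDP2}.

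The main obstacle I expect is the bookkeeping in the denominator: proving that the product of Euler factors $\mathcal{E}(f_k,g_1,h_1)/(\mathcal{E}_0(g_1)\mathcal{E}_1(g_1))$ coming from Hsieh's formula, together with the Petersson norm $\langle g_1,g_1\rangle$ and the ``extra'' Katz $L$-value produced when one splits the triple product into two Rankin convolutions, really reconstitutes $\mathfrak{g}(\chi_f)\log_p(u_{g_\alpha})$ on the nose (up to an algebraic unit). In the $k=2$, trivial-character case this is \cite[\S3.2]{DLR}; here I need the nontrivial-Nebentypus version of the Katz factorization, which is where the Gauss sum $\mathfrak{g}(\chi_f)$ is forced upon us, and I must check that the weight-one specialization $g_1$ of the Hida family genuinely equals the $p$-stabilization $g_\alpha$ (so that $\beta_{g_1} = \beta_g$ and the Gross--Stark unit $u_{g_\alpha}$ appears with the correct $\sigma_p$-eigenvalue $\alpha_g/\beta_g$). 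The compatibility of Hsieh's specific test vectors with the splitting $V_{gh} = V_{\psi_1}\oplus V_{\psi_2}$ — i.e.\ that his test vectors are the ``right'' ones making the factorization exact rather than merely proportional — is the other delicate point, which I would handle by comparing interpolation formulas on the dense set $\cW^g_{\bff\bfg\bfh}$ rather than trying to match test vectors directly.
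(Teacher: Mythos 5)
Your proposal follows essentially the same route as the paper: a three-variable factorization of $\L_p^g(\ubff,\ubfg,\ubfh)^2$ into two Bertolini--Darmon--Prasanna/Castell\`a factors times a Katz factor, obtained by comparing interpolation formulas on the dense region $\cW^g_{\bff\bfg\bfh}$ (not as a function of $k$ alone, where there are no interpolation points) and extending by continuity, followed by Theorem~\ref{thm BDP main thm1} at $(k,1,1)$ and Katz's elliptic-unit formula (Theorem~\ref{thm: katz padic Lfunction and elliptic units}) for the $\log_p(u_{g_\alpha})$ term. The one place where your account diverges from the paper is the provenance of $\mathfrak{g}(\chi_f)$: it is not forced by a ``nontrivial-Nebentypus Katz factorization'' but by the Atkin--Lehner pseudo-eigenvalue $w_f$ hidden in the BDP period factors $\omega(f_k,\Psi_{gh}(k,1,1))\,\omega(f_k,\Psi_{gh'}(k,1,1))$, which Proposition~\ref{prop: f is admissible} controls via Atkin--Li's theorem that $w_f\,\mathfrak{g}(\chi_f)\in L$ for $k$ even; this is exactly the input you would need to pin $\lambda$ down to a quadratic extension of $L$.
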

\begin{remark}
We stress that the non-zero scalar $\lambda$ lies in a quadratic extension of the field of coefficients of $f$, $g$, and $h$. In this sense, this also represents a slight strengthening of \cite[Theorem 3.3]{DLR}, in which one had a less precise control of the degree of such extension.   
\end{remark}

Theorem \ref{thm: special case weight (k,1,1)} can be seen as giving evidence towards Conjecture \ref{conj: general weights}, as we now explain. The decomposition of representations \eqref{eq:dec of artin reps} induces a decomposition

\begin{align*}
&\hom_{\mathrm{G}_\Q}(V_{gh},\mathrm{CH}^{k/2}(W_{k-2}/H)_{0,L}^{[f]})\simeq\\ &\hom_{\mathrm{G}_\Q}(V_{\psi_1},\mathrm{CH}^{k/2}(W_{k-2}/H)_{0,L}^{[f]})\oplus\hom_{\mathrm{G}_\Q}(V_{\psi_2},\mathrm{CH}^{k/2}(W_{k-2}/H)_{0,L}^{[f]}),
\end{align*}
where $V_{\psi_i}$ denotes the $2$-dimensional representation over $L$ obtained by induction from $\psi_i$. 

Consider now the Heegner cycle $\tilde\Delta_c:=\tilde \Delta_{k-2,0,\cO_c,c}$ defined in \eqref{eq: gen heegner cycle} with $r_0=k-2$, $r_1=0$ and associated to the trivial ideal $\cO_c$. It belongs to $\mathrm{CH}^{k/2}(W_{k-2}/H)_{0,L}^{[f]}$, and we consider the projection to the $\bar\psi_i$ component
\begin{align}\label{eq: projection}
\Delta_c^{\bar\psi_i}:=  \sum_{\sigma\in\Gal(H/K)}\psi_i(\sigma)(\tilde \Delta_{c})^\sigma.
\end{align}

Observe that $\Delta_c^{\bar\psi_i}$ gives an element in $\hom_{\mathrm{G}_\Q}(V_{\psi_i},\mathrm{CH}^{k/2}(W_{k-2}/H)_{0,L}^{[f]})$. Indeed, since $\psi_i$ is anticyclotomic we have that $V_{\psi_i}=V_{\bar\psi_i}$, and  by Frobenius reciprocity giving an element in $$\hom_{\mathrm{G}_\Q}(V_{\bar\psi_i},\mathrm{CH}^{k/2}(W_{k-2}/H)_{0,L}^{[f]})$$ is equivalent to giving a $G_K$-homomorphism from $L$ (viewed as a $G_K$-module via the action of $\bar\psi_i$) to $\mathrm{CH}^{k/2}(W_{k-2}/H)_{0,L}^{[f]}$; the map that sends $1$ to $\Delta_c^{\bar\psi_i}$ is one such homomorphism.

As we mentioned before, in our setting the sign of the functional equation of $L(f,\psi_i,s)$ is $-1$ and therefore these $L$-functions vanish at the central point $s=k/2$. Suppose that $\ord_{s=k/2}L(f,\psi_i,s)=1$ for $i=1,2$ (a fact which is actually expected to hold for ``generic'' Hecke characters). Then, by the general philosophy of Heegner points and Heegner cycles it is expected that
\begin{align}\label{eq: assumption on heegner cycles}
  \hom_{\mathrm{G}_\Q}(V_{\psi_i},\mathrm{CH}^{k/2}(W_{k-2}/H)_{0,L}^{[f]})=\langle \Delta_c^{\bar\psi_i} \rangle.
\end{align}
That is to say, the above space is generated by the homomorphism given by the Heegner cycle. This has been proven for $k=2$ by the results of Gross--Zagier \cite{GZ}, Kolyvagin \cite{kolyvagin}, Zhang \cite{zhang}, and Bertolini--Darmon \cite{BDKo}. For $k>2$, in the particular case where $c=1$ and $\chi_f=1$, it follows from results of Zhang \cite{zhang-cycles} and Nekovar \cite{Nekovar} on Heegner cycles if one assumes the Gillet-Soulé conjecture on the non-degeneracy of the height pairing.

Arguing as in \cite[Lemma 3.2]{DLR} we see that if we assume \eqref{eq: assumption on heegner cycles} then 
\begin{align*}
  \mathrm{Reg}(f,g_\alpha,h)=\AJ_p({\Delta}_{c}^{\bar\psi_1})(\omega_f)\cdot \AJ_p({\Delta}_{c}^{\bar\psi_2})(\omega_f).
\end{align*}
Finally, observe that $\Delta_c^{\bar\psi_i}$ and $\tilde \Delta^{\bar\psi_i}_{k-2,0,c_i}$ are defined differently (see \eqref{eq: projection}  and \eqref{eq: nuestro ciclo de heegner generalizado}), but we have that 
\begin{align}\label{eq: equality of AJs}
  \AJ_p({\Delta}_{c}^{\bar\psi_i})(\omega_f) = [H:K_1]\cdot \AJ_p({\Delta}_{k-2,0,c_i}^{\bar\psi_i})(\omega_f)\pmod{L^\times}.
\end{align}
Indeed, by Shimura's reciprocity law $(\tilde \Delta_{r_0,r_1,\cO_c, c})^\sigma = \tilde \Delta_{r_0,r_1,\mathfrak a^{-1}, c} $ when $\sigma$ corresponds to $\fa$ under the reciprocity map of class field theory. Then \eqref{eq: equality of AJs} follows from the display in the proof of \cite[Proposition 4.2.1]{BDP2} and the fact that $\psi_i$ is of finite type $(c_i,\cN_f,\chi_f)$. 

Therefore we see that
\begin{align*}
  \mathrm{Reg}(f,g_\alpha,h)=\AJ_p({\Delta}_{c}^{\bar\psi_1})(\omega_f)\cdot \AJ_p({\Delta}_{c}^{\bar\psi_2})(\omega_f) \pmod{L^\times}.
\end{align*}
and therefore Theorem \ref{thm: special case weight (k,1,1)} proves Conjecture \ref{conj: general weights} in this case (up to the fact that in Theorem \ref{thm: special case weight (k,1,1)} $\lambda$ lies in a quadratic extension of $L$ rather than in $L$ itself).

We devote the rest of Section \ref{sec: factorization formula} to prove Theorem \ref{thm: special case weight (k,1,1)}. The argument follows essentially the same strategy introduced in \cite[\S 3]{DLR}, which exploits a certain factorization of $p$-adic $L$-functions. In \S \ref{sec: background on p-adic L-functions} we will recall the different $p$-adic $L$-functions involved. Then,  in \S \ref{sec: a factorization formula} we will state and prove the factorization formula, and we will prove Theorem \ref{thm: special case weight (k,1,1)}.

\subsection{$p$-adic $L$-functions}\label{sec: background on p-adic L-functions}
In this subsection we review two types of $p$-adic $L$-functions: the Bertolini--Darmon--Prasanna $p$-adic $L$-function (and Castell\`{a}'s generalization) and Katz's $p$-adic $L$-function.
\subsubsection{The Bertolini--Darmon--Prasanna $p$-adic $L$-function}\label{sec: p-adic rankin L-function}
Let $f\in S_k(N_f,\chi_f)$ be a normalized newform and let $\psi$ be a Hecke character of the imaginary quadratic field $K$ of infinity type $(\ell_1,\ell_2)$, conductor $\fc_\psi$ and central character $\varepsilon_\psi$. We can attach to the pair $(f,\psi)$ the complex $L$-function $$L(f,\psi,s):=L(V_f\otimes V_\psi,s)=L(\pi_{f}\times\pi_\psi, s-\frac{k-1+\ell_1+\ell_2}{2}),$$ where $\pi_f, \pi_\psi$ are the unitary automorphic representations of $\GL_2(\A_\Q)$ attached to $f$ and $\psi$ respectively. It is defined as an Euler product, and can be completed to a meromorphic function  $$\Lambda(f,\psi,s)=L_\infty(f,\psi,s)L(f,\psi,s),$$ that is an entire function if $\chi_f\cdot\chi_K\cdot\varepsilon_\psi\neq1$. Moreover, it satisfies a functional equation of the form 
\begin{equation}\label{eq: functional equation f,psi}
\Lambda(f,\psi,s)=\epsilon(f,\psi)\Lambda(\bar{f},\bar{\psi},k+\ell_1+\ell_2-s).
\end{equation}
Following the terminology of \cite{BDP1}, the character $\psi$ is said to be \textit{central critical} for $f$ if $\Lambda(f,\psi^{-1},s)$ is selfdual, $s=0$ is the center of symmetry in the functional equation, and the factor $L_\infty(f,\psi^{-1},s)$ has no poles at $s=0$. Let $\Sigma$ be the set of central critical characters for $f$. Each $\psi\in\Sigma$ satisfies  $\ell_1+\ell_2=k$ and $\varepsilon_\psi=\chi_f$. In particular, the set $\Sigma$ decomposes as
$$\Sigma=\Sigma^{(1)}\sqcup\Sigma^{(2)}\sqcup\Sigma^{(2')},$$ where 
\begin{itemize}
	\item $\Sigma^{(1)}:=\{\psi\in\Sigma\mid 1\leq \ell_1\leq k-1 \text{ and } 1\leq \ell_2\leq k-1  \}$;
	\item $\Sigma^{(2)}:=\{\psi\in\Sigma\mid \ell_1\geq k, \ell_2\leq 0  \}$;
	\item $\Sigma^{(2')}:=\{\psi\in\Sigma\mid \ell_1\leq0, \ell_2\geq k  \}$.
\end{itemize}

Assume that the pair $(N_f,K)$ satisfies Heegner Hypothesis \ref{ass: HH} and let $\cN_f$ be an ideal of $\cO_K$ of norm $N_f$. We now recall the definition of  $p$-adic $L$-function attached to $f$ and $K$ constructed in \cite{BDP1} that interpolates central critical  values $L(f,\psi^{-1},0)$.

We will denote $\Sigma^{(i)}(c,\cN_f,\chi_f)$ the elements of $\Sigma^{(i)}$ of finite type $(c,\cN_f,\chi_f)$.
In \cite{BDP1} the $p$-adic $L$-function $$\L_p(f,K):\hat{\Sigma}(c,\cN_f,\chi_f)^{(2)}\longrightarrow\C_p$$ is defined on the completion of $\Sigma(c,\cN_f,\chi_f)^{(2)}$ with respect to an adequate $p$-adic topology, and it is characterised by the following interpolation property.
\begin{proposition}\label{prop: BDP interpolation formula}
	For each $\psi\in\Sigma(c,\cN_f,\chi_f)^{(2)}$ of infinity type $(k+j,-j)$ with  $j\geq0$,
	\begin{equation*}\label{eq BDP interpolation formula}
	\L_p(f,K)(\psi)=\Big(\frac{\Omega_p}{\Omega} \Big)^{2(k+2j)}\fe(f,\psi)^2\fa(f,\psi)\ff(f,\psi) L(f,\psi^{-1},0)
	\end{equation*}
	where
	\begin{enumerate}[\indent $i)$]
		\item $\Omega$ (resp. $\Omega_p$) is the complex (resp. $p$-adic) period determined by $A_c$ defined in \cite[(5.1.15)]{BDP1} (resp. \cite[(5.2.2)]{BDP1});
		\item $\fe(f,\psi)=1-\psi^{-1}(\bar{\p})a_p(f)+\psi^{-2}(\bar{\p})\chi_f(p)p^{k-1}$;
		\item $\fa(f,\psi)=\pi^{k+2j-1}j!(k+j-1)!$;
		\item $\ff(f,\psi)=\dfrac{2^{k+2j-2} }{(c\sqrt{D_K})^{k+2j-1}}\prod_{q\mid c}\dfrac{q-\chi_K(q)}{q-1}\omega(f_k,\psi)^{-1}\#(\cO_K^\times)$  where $\omega(f,\psi)$ is the scalar of complex norm $1$ defined in \cite[(5.1.11)]{BDP1}.
	\end{enumerate}
\end{proposition}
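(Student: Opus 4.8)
The statement is precisely the defining interpolation property of the Bertolini--Darmon--Prasanna $p$-adic $L$-function, so the plan is to recover it as in \cite{BDP1}: realize $\L_p(f,K)$ as a continuous function on $\hat\Sigma(c,\cN_f,\chi_f)^{(2)}$ obtained by evaluating $p$-adic modular forms at CM points, and match its values on the dense subset $\Sigma(c,\cN_f,\chi_f)^{(2)}$ against the complex central values through a Waldspurger-type period formula combined with Katz's comparison of complex and $p$-adic CM periods. I would begin on the archimedean side. For $\psi$ of infinity type $(k+j,-j)$ the iterated Maass--Shimura operator $\delta_k^{\,j}$, which raises the weight by $2j$, turns $f$ into a nearly holomorphic form $\delta_k^{\,j}f$ of weight $k+2j$. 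A Rankin--Selberg unfolding, together with the CM theory of $(A,t)$, expresses $L(f,\psi^{-1},0)$ in terms of the square of the period sum $\sum_{[\mathfrak b]\in\Pic(\cO_c)}\psi^{-1}(\mathfrak b)\,(\delta_k^{\,j}f)(\mathfrak b\star(A,t))$, which Shimura's algebraicity theorem renders algebraic after dividing by $\Omega^{\,k+2j}$. The archimedean zeta integral in the unfolding contributes the Gamma factor $\fa(f,\psi)$, while the ramified local integrals at the primes dividing $cD_K$, the normalization of the CM period attached to $A_c$, and the self-dual constant $\omega(f,\psi)$ of complex absolute value $1$ assemble into $\ff(f,\psi)$.

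On the $p$-adic side, since $p=\p\bar\p$ splits in $K$ and $f$ is $p$-ordinary, I would pass to the $p$-depletion $f^{[p]}$, the $p$-adic modular form whose $q$-expansion retains only the coefficients prime to $p$, and apply the Atkin--Serre operator $\theta=q\,\frac{d}{dq}$, whose iterate $\theta^{\,j}$ is the $p$-adic avatar of $\delta_k^{\,j}$ at ordinary CM points. Evaluating $\theta^{\,j}f^{[p]}$ at the same points and normalizing by the $p$-adic period $\Omega_p$ yields the $p$-adic period sum
\[
\Omega_p^{-(k+2j)}\sum_{[\mathfrak b]\in\Pic(\cO_c)}\psi^{-1}(\mathfrak b)\,(\theta^{\,j}f^{[p]})(\mathfrak b\star(A,t)),
\]
and by Katz's theory the values $(\theta^{\,j}f^{[p]})(\mathfrak b\star(A,t))$ vary $p$-adically continuously as $\psi$ ranges over $\Sigma(c,\cN_f,\chi_f)^{(2)}$; together with the continuity of $\psi\mapsto\psi^{-1}(\mathfrak b)$ this extends the square of the displayed quantity to a continuous function on $\hat\Sigma(c,\cN_f,\chi_f)^{(2)}$, which is the definition of $\L_p(f,K)$.

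It then remains to compare the two constructions on $\Sigma(c,\cN_f,\chi_f)^{(2)}$. Katz's comparison identifies the complex and $p$-adic CM period sums, introducing the ratio of periods raised to the power $k+2j$; passing from $f^{[p]}$ back to $f$ at the point $\mathfrak b\star(A,t)$ introduces the Euler factor at $\bar\p$, namely $\fe(f,\psi)=1-\psi^{-1}(\bar\p)a_p(f)+\psi^{-2}(\bar\p)\chi_f(p)p^{k-1}$. Since both the period and $\fe(f,\psi)$ enter squared (the $L$-value being a square of the period), one obtains the exponent $2(k+2j)$ on $\Omega_p/\Omega$ and the factor $\fe(f,\psi)^2$, and combining with the archimedean and finite factors $\fa(f,\psi)$, $\ff(f,\psi)$ from the first paragraph yields the asserted formula. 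The main obstacle is the simultaneous bookkeeping of all of these factors: carrying out the global Rankin--Selberg integral and the ramified local zeta integrals so as to isolate $\fa(f,\psi)$ and $\ff(f,\psi)$ with the exact constant $\omega(f,\psi)$, and, most delicately, establishing via Katz's theory the precise $p$-adic interpolation of the nearly holomorphic CM values so that $\fe(f,\psi)$ and the period ratio appear with the correct normalizations; this is exactly the computation carried out in \cite{BDP1}. Finally, because $\Sigma(c,\cN_f,\chi_f)^{(2)}$ is dense in $\hat\Sigma(c,\cN_f,\chi_f)^{(2)}$, the interpolation formula determines the continuous function $\L_p(f,K)$ uniquely, which is the characterization asserted in the statement.
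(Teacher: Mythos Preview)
Your proposal correctly sketches the construction and proof from \cite{BDP1}, but note that the paper itself does not give a proof of this proposition at all: it is stated purely as a recollection of the defining interpolation property established in \cite{BDP1}, with no argument beyond the citation. So there is nothing to compare your approach against---the authors simply invoke the result, whereas you have outlined the actual Waldspurger-period/Maass--Shimura/Katz-comparison argument that underlies it. Your sketch is faithful to how \cite{BDP1} proceeds, so it is correct, but for the purposes of this paper a one-line citation suffices.
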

The set $\Sigma(c,\cN_f,\chi_f)^{(1)}$ is contained in the completion $\hat{\Sigma}(c,\cN_f,\chi_f)^{(2)}$, and the main theorem of \cite{BDP1}  (and its extension of \cite{BDP2}) relates the values of the $p$-adic $L$-function at characters in $\Sigma(c,\cN_f,\chi_f)^{(1)}$ to the generalised Heegner cycles.
As in \S\ref{sec: Generalised Heegner cycles}, we fix an elliptic curve $A$ with complex multiplication by $\cO_K$, defined over the Hilbert class field $K_1$ of $K$. Assume that $A$ has good reduction at $p$ and let $r\leq k-2$ be an integer such that $k\equiv r \mod2$.
Let $H$ be a number field over which all the structures above are defined, let $\omega_A$  be a generator of $\Omega^1(A/H)$ and, considering the algebraic splitting 
$$\coh^1_{\derham}(A/H)=\Omega^1(A/H)\oplus\coh^{0,1}_{\derham}(A/H),$$ let  $\eta_A\in\coh^1_{\derham}(A/H)$ be the element such that $\langle\omega_A,\eta_A\rangle=1$.
For each $j\in\{0,\dots,r \}$, we define 
$$\omega_A^j\eta_A^{r-j}:= \epsilon_{A^{r}}^*(p_1^*\omega_A\wedge\cdots\wedge p_j^*\omega_A\wedge p_{j+1}^*\eta_A\wedge\cdots\wedge p_r^*\eta_A)$$ where  $p_1,\dots,p_r:A^{r}\rightarrow A$ are the projections.
The set $\{\omega_A^j\eta_A^{k-2-j} \mid j=0,\dots k-2 \}$ forms a basis for $\mathrm{Sym}^{k-2}\coh^1_\derham(A/H)$. The following theorem is due to Bertolini--Darmon--Prasanna and Castell\`{a}.
\begin{theorem}[Bertolini--Darmon--Prasanna, Castell\`{a}]\label{thm BDP main thm1}
	For each Hecke character $\psi\in\Sigma(c,\cN_f,\chi_f)^{(1)}$ of infinity type $(r-j,j)$ with $0\leq j\leq r$,
	\begin{alignat*}{2}
	\L_p(f,K)(\psi \norm_K^\frac{k-r}{2})= & \ \fe(f,\psi \norm_K^\frac{k-2-r}{2})^2\frac{\Omega_p^{r-2j}}{(j+1)!\cdot c^{2j}\cdot (4d_c)^\frac{k-2-r}{2}}\Big( \AJ_p(\Delta_{k-2,r,c}^\psi)(\omega_f\wedge\omega_A^j\eta_A^{r-j})\Big)^2,
	\end{alignat*}
	where $\omega_f\in\fil^{k-1}\epsilon_{W_{k-2}}\coh_\derham^{k-1}(W_{k-2}/H)$ is the differential attached to $f$ defined in \S\ref{sec: Modular forms and the cohomology of Kuga--Sato varieties} and $\norm_K = \norm\circ \norm_{K/\Q}$ is the norm character on $K$.
\end{theorem}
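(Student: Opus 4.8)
The plan is to realise both sides of the claimed identity as specialisations of a single rigid-analytic object — the $p$-adic modular form underlying $\L_p(f,K)$ — so that the formula at the non-interpolation characters $\psi\norm_K^{(k-r)/2}\in\Sigma^{(1)}$ follows from the same CM evaluation that produces the interpolation property of Proposition \ref{prop: BDP interpolation formula}. First I would recall the construction of $\L_p(f,K)$ following Bertolini--Darmon--Prasanna: one forms the $p$-depletion $f^{[p]}$, whose Fourier expansion is supported on indices prime to $p$ and which therefore lies in the image of the Atkin--Serre operator $\theta=q\,d/dq$, so that $p$-adic antiderivatives $\theta^{-t}$ are defined on it. Since $p$ splits in $K$, the CM elliptic curve $A$ is ordinary at $p$ and the values $(\theta^{s}f^{[p]})(P_\fa)$ at the CM points $P_\fa$ attached to ideals $\fa$ make sense $p$-adically; one then defines $\L_p(f,K)(\psi)$, up to the $p$-adic period $\Omega_p$, as the CM sum $\sum_{[\fa]}\psi^{-1}(\fa)\,(\theta^{s}f^{[p]})(P_\fa)$, where the exponent $s$ varies $p$-adic-analytically with the infinity type of $\psi$.

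To pin down the value on $\Sigma^{(2)}$, where $\psi$ has infinity type $(k+j,-j)$ with $j\geq0$ and $s=j\geq 0$, I would invoke the Shimura--Rankin--Selberg method: the algebraic part of $L(f,\psi^{-1},0)$ is the Petersson product of $f$ against the $j$-th Maass--Shimura derivative $\delta_k^{j}$ of a theta series, and Katz's theory translates the value of $\delta_k^{j}$ at the ordinary CM point into $\theta^{j}$ applied to the $p$-adic form. This reproduces the Euler factor $\fe$ and the period ratio $(\Omega_p/\Omega)^{2(k+2j)}$ of the interpolation formula, identifying the left-hand side of the theorem with one such CM value of the $p$-adic modular form.

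The heart of the proof is the identification of the $\Sigma^{(1)}$ value with the $p$-adic Abel--Jacobi image. Here $s=-(1+j)$ is negative, so $(\theta^{s}f^{[p]})(P_\fa)$ is the value of a $p$-adic antiderivative at a CM point. The key geometric input is that the generalised Heegner cycle $\Delta_{k-2,r,c}^\psi$ is supported on the fibre over $P_\fa$, and that its syntomic Abel--Jacobi image, paired against $\omega_f\wedge\omega_A^j\eta_A^{r-j}$, is computed by a Coleman primitive of the differential attached to $f$ and to the class $\omega_A^j\eta_A^{r-j}\in\mathrm{Sym}^r\coh^1_\derham(A)$; the $\norm_K^{(k-r)/2}$ twist matches the Hodge weight of this $A^r$-factor. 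The bridge is that, in the ordinary setting, $\theta^{-1}$ on the unit-root part is precisely the $p$-adic integration defining $\AJ_p$, so that $(\theta^{-(1+j)}f^{[p]})(P_\fa)$ is the Abel--Jacobi pairing; summing over $[\fa]$ with the weights $\psi(\fa)$ and the CM-period normalisation assembles the cycle $\Delta_{k-2,r,c}^\psi$ and yields $\big(\AJ_p(\Delta_{k-2,r,c}^\psi)(\omega_f\wedge\omega_A^j\eta_A^{r-j})\big)^2$, the combinatorial factor $(j+1)!\,c^{2j}\,(4d_c)^{(k-2-r)/2}$ (from the normalisation of $\theta^{-(1+j)}$ against the de Rham classes), and the power $\Omega_p^{r-2j}$. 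The squaring reflects the Waldspurger--Gross--Zagier phenomenon whereby a central-critical value is the square of an Abel--Jacobi period.

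The main obstacle is precisely this last identification: making rigorous the claim that the value at a CM point of a $p$-adic modular form hit by a \emph{negative} power of $\theta$ equals the syntomic Abel--Jacobi pairing of the generalised Heegner cycle. This requires the full comparison between overconvergent $p$-adic modular forms and rigid/de Rham cohomology — the interpretation of $\theta^{-1}$ on the unit-root subspace as $p$-adic integration — and a delicate matching of the analytic periods $\Omega,\Omega_p$ with the algebraic de Rham classes $\omega_f$, $\omega_A$, $\eta_A$. In the semistable case $\ord_p(N_f)=1$, which is Castell\`a's contribution in \cite{Cas18}, one must further replace crystalline Abel--Jacobi by its log-crystalline (Hyodo--Kato) analogue and verify that the $p$-depletion and Coleman-primitive arguments survive the passage to semistable reduction.
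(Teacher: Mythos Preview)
Your sketch is a reasonable outline of how Bertolini--Darmon--Prasanna (and Castell\`a in the semistable case) actually prove this result, but the paper under review does \emph{not} give its own proof of Theorem \ref{thm BDP main thm1}. Immediately after the statement the authors simply record that this is \cite[Theorem 4.2.5]{BDP2} in the good-reduction case (with the remark that the proofs there, written for $c=1$, extend to $c>1$ coprime to $ND$), together with Castell\`a's extension \cite[Theorem 2.11]{Cas18} to the semistable case (and the analogous remark that his arguments, written for trivial character, go through for non-trivial $\chi_f$). In other words, in this paper the theorem functions as a black box imported from the literature.

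So there is no ``paper's own proof'' to compare against: what you have written is essentially a condensed narrative of the BDP proof itself (the $p$-depletion $f^{[p]}$, the $p$-adic iterates $\theta^s$, the CM evaluation, and the identification of negative powers of $\theta$ with Coleman primitives giving the syntomic Abel--Jacobi pairing). That is fine as an explanation of \emph{why} the theorem is true, and your identification of the main technical obstacle --- making rigorous that $\theta^{-(1+j)}f^{[p]}$ evaluated at a CM point computes $\AJ_p$ of the generalised Heegner cycle, and carrying this over to the log-crystalline/semistable setting --- is accurate. But for the purposes of matching the paper, the correct answer is that the authors do not reprove the theorem at all; they cite it and only note that the published arguments extend verbatim to the slightly more general parameters ($c>1$, non-trivial $\chi_f$) needed here.
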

The above result is  \cite[Theorem 4.2.5]{BDP2} when $W_{k-2}$ has good reduction at $p$ (in loc.\ cit. only the case $c=1$ is treated, but the proofs therein generalize to $c>1$ with $(c,ND)=1$). The formula was extended by Castell\`{a} to the case of semistable reduction \cite[Theorem 2.11]{Cas18} (again, Castell\`{a} works with trivial character but the proofs extend to the case of non-trivial character).

Castell\`{a} constructed in \cite{Castella-variation} a two variable $p$-adic $L$-function that interpolates the square roots of $\L_p(f,K)(\psi)$ for forms $f$ varying in a Hida family. More precisely, if $\bff\in\Lambda_\bff[[q]]$ is a Hida family Castell\`{a}'s construction provides a two variable function $\L_p(\bff,K)(k,\psi)$ (defined on an appropriate weight space $\cW$) such that for $(k,\psi)\in\cW^{\mathrm{cl}}$ one has that
\begin{align*}
  \L_p(\bff,K)(k, \psi )^2 = \L_p(f_k,K)(\psi).
\end{align*}
The factorization formula that we will prove in \ref{sec: a factorization formula} involves Castell\`{a}'s $p$-adic $L$-function evaluated at characters which are the classical specializations of Hida families of theta series, which we next recall.

\subsubsection{Hida families of  theta series}\label{sec: Hida families of Theta series}
Let $\psi_g$ be a Hecke character of the imaginary quadratic field $K$ of infinity type $(0,\ell_0-1)$ and conductor $\fc$.
Define  $g:=\theta(\psi_g)\in S_{\ell_0}(N_g,\chi_g)$ to be the theta series attached to $\psi_g$. 
Fix a prime number $p$ not dividing $N_g$ and assume that it splits in $K$ as $p\cO_K=\p\bar{\p}$. There is a Hida family $\bfg$ of theta series passing through $g_\alpha$, whose construction can be found in \cite[\S 5]{Gh05}. We next recall the specialisations at integer weights of this family.

Fix a Hecke character $\lambda$ of $K$ with infinity type $(0,1)$ and conductor $\bar{\p}$. Let $\Q_p(\lambda)$ be the field obtained by adjoining to $\Q$ the values of $\lambda$ and taking the $p$-adic completion. Consider the factorisation of its group of units  $\cO_{\Q_p(\lambda)}^\times=\mu\times W$ where $\mu$ is finite and $W$ is free over $\Z_p$, and take the projection to the second factor 
\begin{align*}
  \langle\cdot\rangle:\cO_{\Q_p(\lambda)}^\times\longrightarrow W.
\end{align*}
For each $\ell\in\Z_{\geq0}$ such that $\ell\equiv\ell_0\pmod{p-1}$, define 
\begin{align*}
\psi_{g,\ell-1}^{(p)}:=\psi_g\langle\lambda\rangle^{\ell-\ell_0}\ \  \text{ and }\ \  \psi_{g,\ell-1}(\fq):=\begin{cases}
\psi_{g,\ell-1}^{(p)}(\fq) & \fq\neq\bar{\fq};\\
\chi_g(p)p^{\ell-1}/\psi_{g,\ell-1}^{(p)}(\p) & \fq=\bar{\p}. 
\end{cases}
\end{align*}
Then $\psi_{g,\ell-1}$ is a Hecke character of infinity type $(0,\ell-1)$. Define 
$$g_\ell:=\theta(\psi_{g,\ell-1})\in M_\ell(N_g,\chi_g)$$ 
The $p$-stabilisation of this modular form is the theta series $$\bfg_\ell=\theta(\psi_{g,\ell-1}^{(p)})\in S_\ell(N_gp,\chi_g).$$

\subsubsection{Katz's $p$-adic $L$-function}

Using the notation of the previous sections, fix an ideal $\fc$ of the ring $\cO_K$ and let $\Sigma(\fc)$ be the set of Hecke characters of $K$ with conductor dividing $\fc$.
Let $\Sigma_K^{(1)}(\fc)$ and $\Sigma_K^{(2)}(\fc)$ be the subsets of $\Sigma_K(\fc)$ containing the characters of infinity type $(\ell_1,\ell_2)$ such that $\ell_1\leq0,\ell_2\geq1$ and $\ell_1\geq1,\ell_2\leq0$ respectively. Define $$\Sigma_K(\fc):=\Sigma_K^{(1)}(\fc)\sqcup\Sigma_K^{(2)}(\fc).$$
Then for each $\psi\in\Sigma_K(\fc)$ the point $s=0$ is central critical for the complex $L$-function $L(\psi^{-1},s)$.

In \cite{Kat76}, Katz attached to $K$ a $p$-adic $L$-function $\L_p(K)$, defined on the completion of $\Sigma_K^{(2)}(\fc)$ with respect an adequate $p$-adic topology and characterised by the following interpolation property.

\begin{proposition}\label{prop: interpolation property for katz padic Lfunction}
	For each $\psi\in\Sigma_K^{(2)}(\fc)$ with infinity type $(\ell_1,\ell_2)$, we have
	\begin{equation}
	\L_p(K)(\psi)=\fa(\psi)\fe(\psi)\ff(\psi) \Big( \frac{\Omega_p}{\Omega}\Big)^{\ell_1-\ell_2}L_\fc(\psi^{-1},0),
	\end{equation}
	where
	\begin{enumerate}[\indent $i)$]
		\item $L_\fc(\psi^{-1},s)$ is the product of all the Euler factors defining $L(\psi^{-1},s)$ except the ones corresponding to the primes dividing $\fc$; 
		\item $\fa(\psi)=(\ell_1-1)!\pi^{-\ell_2}$;
		\item $\fe(\psi)=(1-\psi(\p)p^{-1})(1-\psi^{-1}(\bar{\p}))$;
		\item $\ff(\psi)=(D_K)^{\ell_2/2}2^{-\ell_2}$.
		\item $\Omega$ and $\Omega_p$ are the periods appearing in Proposition \ref{prop: BDP interpolation formula}.
	\end{enumerate}
\end{proposition}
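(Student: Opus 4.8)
The plan is to recover the formula directly from Katz's construction of $\L_p(K)$ in \cite{Kat76}, which proceeds by interpolating the CM values of a $p$-adic family of Eisenstein series and matching them against the classical Hecke $L$-values via Damerell's theorem. The whole argument hinges on the CM elliptic curve $A$ fixed in \S\ref{sec: p-adic rankin L-function}: since $p$ splits in $K$ and $A$ has good ordinary reduction at the prime above $p$ singled out by $\p$, the N\'eron differential $\omega_A$ together with the complex uniformization of $A$ furnishes the complex period $\Omega$, while the unit-root splitting of $\coh^1_{\derham}(A)$ (equivalently the Serre--Tate canonical coordinate) furnishes the $p$-adic period $\Omega_p$; these are the periods already appearing in Proposition \ref{prop: BDP interpolation formula}. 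One then \emph{defines} $\L_p(K)(\psi)$ as a moment of Katz's Eisenstein measure and verifies the interpolation property as asserted.

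\textbf{Classical $L$-values.} First I would express, for $\psi\in\Sigma_K^{(2)}(\fc)$ of infinity type $(\ell_1,\ell_2)$ with $\ell_1\geq 1$ and $\ell_2\leq 0$, the partial $L$-value $L_\fc(\psi^{-1},0)$ as the CM value of a nearly holomorphic Eisenstein series of weight $\ell_1-\ell_2$. Concretely, one starts from a holomorphic Eisenstein series attached to the relevant Hecke character data, with level prescribed by $\fc$ (which is why only the Euler factors away from $\fc$ survive, producing $L_\fc$ rather than $L$), applies the Maass--Shimura raising operator $\delta$ the appropriate number of times, and evaluates at the CM point attached to $(A,\omega_A)$. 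Damerell's theorem then shows that this CM value equals, up to the explicit archimedean constants collected in $\fa(\psi)$ and $\ff(\psi)$ (the factorial $(\ell_1-1)!$, the power of $\pi$, the power $D_K^{\ell_2/2}$ of the discriminant and the factor $2^{-\ell_2}$), the algebraic number $\fa(\psi)\,\ff(\psi)\,\Omega^{-(\ell_1-\ell_2)}\,L_\fc(\psi^{-1},0)$.

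\textbf{The Eisenstein measure and the period comparison.} Next I would invoke Katz's Eisenstein measure, i.e.\ a $p$-adic measure whose weight-$w$ moments reproduce, after $p$-stabilization, the $q$-expansions of the Eisenstein series of the previous step; the $q$-expansion principle guarantees that the resulting $p$-adic modular forms are well defined and vary $p$-adic-analytically in $\psi$. Passing from the classical Eisenstein series to its $p$-adic avatar is precisely what introduces the Euler factor $\fe(\psi)=(1-\psi(\p)p^{-1})(1-\psi^{-1}(\bar\p))$: the factor at $\bar\p$ comes from the $\bar\p$-depletion (ordinary projection) needed for $p$-adic interpolability, and $(1-\psi(\p)p^{-1})$ is the remaining interpolation factor at $\p$. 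The heart of the matter is then the \emph{comparison of periods}: one computes the CM value of the $p$-adic form using the Serre--Tate trivialization of the cotangent space, replacing the complex operator $\delta$ by its $p$-adic avatar $\theta=q\tfrac{d}{dq}$, and Katz's comparison of $\delta^{j}$ and $\theta^{j}$ at CM points shows that this $p$-adic CM value and the complex CM value of Step 1 differ exactly by the factor $(\Omega_p/\Omega)^{\ell_1-\ell_2}$. Assembling the two computations and incorporating $\fe(\psi)$ from the $p$-stabilization yields the stated formula.

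\textbf{Main obstacle.} I expect the decisive difficulty to be this last point: making the comparison between $\delta^{j}$ and $\theta^{j}$ at the CM point exact, with both periods normalized consistently. This requires the full algebraic apparatus---the Gauss--Manin connection, the Kodaira--Spencer isomorphism, and the unit-root splitting at the ordinary CM point---to identify the algebraic avatar of the nearly holomorphic form produced by $\delta$ with the $p$-adic modular form produced by $\theta$, so that their values at the CM point agree up to the explicit power of $\Omega_p/\Omega$. Tracking every constant through this identification, and through the $p$-stabilization that produces $\fe(\psi)$, is exactly what pins down the precise shape of $\fa$, $\fe$, $\ff$ and the exponent $\ell_1-\ell_2$.
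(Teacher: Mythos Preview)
The paper does not prove this proposition at all: it is stated as the defining interpolation property of Katz's $p$-adic $L$-function, with the construction and proof attributed entirely to \cite{Kat76}. In other words, in the paper this is a quotation of a known result, not something that is re-derived.

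Your proposal is a reasonable outline of the ideas underlying Katz's construction (Eisenstein measures, Damerell's theorem, the $\delta$ vs.\ $\theta$ comparison at ordinary CM points, and the resulting period ratio $(\Omega_p/\Omega)^{\ell_1-\ell_2}$), and nothing in it is wrong in spirit. But you should be aware that what you have written is a proof \emph{sketch}, not a proof: the steps ``Damerell's theorem then shows\ldots'' and ``Katz's comparison of $\delta^j$ and $\theta^j$ at CM points shows\ldots'' are precisely the substantial theorems being invoked, and pinning down the exact constants $\fa(\psi)$, $\fe(\psi)$, $\ff(\psi)$ in the normalization used here requires a careful bookkeeping that you have not carried out. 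If the goal is to match the paper, the appropriate thing to do is simply to cite \cite{Kat76} (and possibly a more recent reference such as \cite{BDP1} or de Shalit for the precise normalization) rather than to reprove the result.
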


From the functional equation satisfied by complex Hecke $L$-functions, it follows a functional equation for Katz $p$-adic $L$-function. In particular, if $\psi$ is a finite order character such that $(\psi')^{-1}=\psi$, then
\begin{equation}\label{eq: functional equation Katz padic Lfuncion}
\L_p(K)(\psi)=\L_p(K)(\psi\norm_K).
\end{equation}
Moreover, in \cite[\S10.4.9, \S10.4.12]{Kat76}, Katz related the values of $\L_p(K)$ at finite order characters to elliptic units.
\begin{theorem}\label{thm: katz padic Lfunction and elliptic units}
	Let $\psi$ a finite order character of $K$ of conductor $\fc$ and let $c$ be the smallest positive integer in $\fc$. Then
	$$\L_p(K)(\psi)=\begin{cases}
	\frac{1}{2}(\frac{1}{p}-1)\log_p(u_\p) & \text{ if } \psi=1;\\
	-\frac{1}{24c}\fe(\psi)\sum_{\sigma\in\Gal(K_c/K)}\psi^{-1}(\sigma)\log_p(\sigma(u)) &  \text{ if } \psi\neq1;
	\end{cases}$$
	where $u_\p\in K^\times$ is a generator of the principal ideal $\p^{h_K}$ and $u\in\cO_{K_c}^\times$ is an elliptic unit.
\end{theorem}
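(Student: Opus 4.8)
The statement is Katz's, and the plan is to recall the structure of his argument in \cite[\S 10.4]{Kat76}, whose core is a $p$-adic analogue of the (second) Kronecker limit formula identifying the boundary value of $\L_p(K)$ with a $p$-adic logarithm of Siegel--Robert units. First I would recall that $\L_p(K)$ is obtained by integrating a character against the Katz--Eisenstein measure $\mu$ on the Galois group $\Gal(K(\fc p^\infty)/K)$: the moment of $\mu$ at a character of infinity type $(\ell_1,\ell_2)$ is computed by evaluating an algebraic Eisenstein--Kronecker series of weight $\ell_1+\ell_2$ at the CM point attached to $(A,t)$, trivialized by $\omega_A$, and for $\ell_1\geq 1$ this reproduces, after multiplication by $(\Omega_p/\Omega)^{\ell_1-\ell_2}$ and the factors $\fa(\psi)\fe(\psi)\ff(\psi)$ of Proposition~\ref{prop: interpolation property for katz padic Lfunction}, the $\fc$-depleted value $L_\fc(\psi^{-1},0)$; the measure $\mu$ itself is the $p$-adic interpolation of this assignment furnished by the $q$-expansion principle.

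The point is that a finite order $\psi$ has $\ell_1=\ell_2=0$, so both the holomorphic weight $\ell_1+\ell_2$ and the period exponent $\ell_1-\ell_2$ vanish: we sit at the weight-zero boundary, off the interpolation range (indeed the factor $(\ell_1-1)!$ inside $\fa(\psi)$ degenerates there), and the value carries no transcendental period. Here I would invoke the $p$-adic Kronecker limit formula: the weight-zero edge of the Eisenstein family interpolated by $\mu$ is computed by the $p$-adic logarithmic derivative of the fundamental Siegel theta unit $\theta$ of conductor $\fc$, and Katz's $0$-th moment recovers $\log_p$ of the value of $\theta$ at the CM torsion point $t$, which is (a power of) an elliptic unit $u\in\cO_{K_c}^\times$. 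The equivariance of the construction under $\Gal(K_c/K)$---Shimura's reciprocity law matching the Galois action on CM points and on special values of $\theta$ with the action built into $\mu$---assembles these into the $\psi^{-1}$-isotypic sum $\sum_{\sigma}\psi^{-1}(\sigma)\log_p(\sigma(u))$. The Euler factor $\fe(\psi)=(1-\psi(\p)p^{-1})(1-\psi^{-1}(\bar{\p}))$ is exactly the pair of local factors at $\p$ and $\bar{\p}$ inherited from the interpolation formula that persist to the boundary, while the rational constant $-\tfrac1{24c}$ records the normalization of the Siegel unit: the weight-$12$ discriminant $\Delta$ contributes a factor $\tfrac1{12}$, and the remaining $\tfrac12$ together with the smallest integer $c\in\fc$ encode the unit index and the passage from $K$-ideals to the order $\cO_c$.

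The trivial character $\psi=1$ must be handled separately, since $\fe(1)=(1-p^{-1})(1-1)=0$: there is an exceptional (trivial) zero coming from the factor $(1-\psi^{-1}(\bar{\p}))$ at $\bar{\p}$. I would analyze this by observing that as $\psi$ degenerates to $1$ the Siegel unit of trivial conductor ceases to be a local unit at $\bar{\p}$ and becomes the global $\p$-unit; the value is then controlled by the surviving factor $(1-p^{-1})$ of $\fe$ and by $\log_p$ of the generator $u_\p$ of $\p^{h_K}$. Tracking the residual $\tfrac12$ and the sign gives $\tfrac12(\tfrac1p-1)\log_p(u_\p)=-\tfrac12(1-p^{-1})\log_p(u_\p)$, the regularized Kronecker limit formula for the trivial character.

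The main obstacle is the $p$-adic Kronecker limit formula itself, i.e.\ proving that the weight-zero edge of the interpolated Eisenstein family is \emph{precisely} the $p$-adic logarithmic derivative of the correctly normalized Siegel unit---not merely proportional to it---and then pinning down all constants: the $\tfrac1{12}$ from $\Delta$, the contributions of $c$ and of the unit index $[\cO_K^\times:\cO_c^\times]$, and the exact distribution of the $\p$ versus $\bar{\p}$ factors in $\fe(\psi)$, so that the final scalar is exactly $-\tfrac1{24c}$ (respectively $\tfrac12(\tfrac1p-1)$ in the trivial case). This constant-chasing, together with the exceptional-zero analysis at $\psi=1$, is the substance of Katz's computation.
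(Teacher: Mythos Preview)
The paper does not prove this theorem at all: it is stated with a bare citation to \cite[\S10.4.9, \S10.4.12]{Kat76} and used as a black box. Your sketch is a reasonable outline of Katz's actual argument---the $p$-adic Kronecker limit formula identifying the weight-zero boundary of the Eisenstein measure with $\log_p$ of Siegel units, Shimura reciprocity giving the Galois-twisted sum, and the exceptional-zero analysis at $\psi=1$---so in that sense it matches the reference the paper invokes, but there is no ``paper's own proof'' to compare against.
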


\subsection{A factorisation formula and the proof of the special case}\label{sec: a factorization formula} We resume the notations and assumptions described in the beginning of \S \ref{sec: statement of the particular case}. We also let let $\bfg,\bfh$ be the Hida families of theta series passing through  $g_\alpha=\theta(\psi_g^{(p)})$ and $h_\alpha=\theta(\psi_h^{(p)})$ as described in \S\ref{sec: Hida families of Theta series}. Recall that, using the notation of \S\ref{sec: Hida families of Theta series}, the specialisation of $\bfg$ at a point of weight $\ell$ is the $p$-stabilisation of $g_\ell:=\theta(\psi_{g,\ell-1})$, and similarly for $\bfh$.

Define for each $k,\ell,m\in\Z_{\geq1}$ the following Hecke characters: 
\begin{enumerate}[\indent $i)$]
	\item $\Psi_g(\ell):=\psi_{g,\ell-1}^{-2}\chi_g\norm_K^\ell$;
	\item $\Psi_{gh}(k,\ell,m):=(\psi_{g,\ell-1}\psi_{h,m-1})^{-1}  \norm_K^{\frac{k+\ell+m-2}{2}}$;
	\item $\Psi_{gh'}(k,\ell,m):=(\psi_{g,\ell-1}\psi'_{h,m-1})^{-1} \norm_K^{\frac{k+\ell+m-2}{2}}$.
\end{enumerate}

For each $k,\ell,m\in\Z_{\geq1}$, we have the following decomposition of the triple tensor product of representations
\begin{equation}\label{eq the triple product representation splits}
	V_{f_k}\otimes V_{g_\ell}\otimes V_{h_m}=V_{f_k}\otimes V_{\psi_{g,\ell-1}}\otimes V_{\psi_{h,m-1}}=V_{f_k}\otimes V_{\psi_{g,\ell-1}\psi_{h,m-1}}\oplus V_{f_k}\otimes V_{\psi_{g,\ell-1}\psi'_{h,m-1}}.
\end{equation}

This induces a factorisation of complex $L$-functions, up to a finite number of factors at the bad reduction primes.  Evaluating at the central critical point $c_0:=\frac{k+\ell+m-2}{2}$ we obtain a factorization of the form
\begin{equation}\label{eq: factorisation of complex Lfunctions}
\begin{split}
L( f_k\otimes g_\ell \otimes h_m,c_0) & 	= \ff_1(f_k,g_\ell,h_m)L(f_k,\psi_{g,\ell-1}\psi_{h,m-1},c_0)L(f_k,\psi_{g,\ell-1}\psi'_{h,m-1},c_0)\\
& =  \ff_1(f_k,g_\ell,h_m)L(f_k,\Psi_{gh}(k,\ell,m)^{-1},0)L(f_k,\Psi_{gh'}(k,\ell,m)^{-1},0),
\end{split}
\end{equation} 
where $\ff_1(f_k,g_\ell,h_m)$ accounts for the evaluation of the Euler factors at bad reduction primes. From this decomposition it follows a factorisation of the triple $p$-adic $L$-function in terms of Katz's and Castell\`{a}'s $p$-adic $L$ functions.

\begin{theorem}\label{thm: factorisation of padic Lfunctions}
	For each $(k,\ell,m)\in\cW_{\bff\bfg\bfh}^{\circ}$  we have
	$$\L_p^g(\ubff,\ubfg,\ubfh)^2(k,\ell,m)\L_p(K)(\Psi_g(\ell))^2=\L_p(K,\bff)(k,\Psi_{gh}(k,\ell,m))^2\L_p(K,\bff)(k,\Psi_{gh'}(k,\ell,m))^2\ff(k,\ell,m),$$
	where	
	\begin{enumerate}[\indent $i)$]
		\item $\ff(k,\ell,m):=\dfrac{\prod_{q\in\Sigma_\mathrm{exc}}(1+q^{-1})}{(-4)^{\ell-2}} \dfrac{\ff(\Psi_g(\ell))^2\ff_1(f_k,g_\ell,h_m)}{\ff_2(g_\ell)^2\ff_3(g_\ell)^2\ff(k,\Psi_{gh}(k,\ell,m))\ff(k,\Psi_{gh'}(k,\ell,m))}$;
		\item $\ff(\Psi_g(\ell))$ is the factor appearing in Proposition \ref{prop: interpolation property for katz padic Lfunction};
		\item $\ff_1(f_k,g_\ell,h_m)$ is the factor appearing in (\ref{eq: factorisation of complex Lfunctions});
		\item $\ff(k,\Psi_{gh}(k,\ell,m))$ and $\ff(k,\Psi_{gh'}(k,\ell,m))$ are the factors appearing in Proposition \ref{prop: BDP interpolation formula}; 
		\item $\ff_2(g_\ell)$ is the factor defined by the equality $L(\Psi_g(\ell),0)=\ff_2(g_\ell)L_\fc(\Psi_g(\ell),0)$, where $\fc:=\mathrm{lcm}(\fc_g,\fc_h)$;
		\item $\ff_3(g_\ell)$ is the factor defined by the equality $\langle g_\ell^*,  g_\ell^*\rangle=(\ell-1)!\pi^{-\ell}\ff_3(g_\ell)L(\Psi_g(\ell),0)$ of \cite[Lemma 3.7]{DLR}, where $g_\ell^*:=g_\ell\otimes\chi_g^{-1}$.
	\end{enumerate}
\end{theorem}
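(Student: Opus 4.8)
The plan is to deduce the factorization of $p$-adic $L$-functions from the corresponding factorization of complex $L$-functions by comparing interpolation formulas at the classical crystalline points, using that both sides of the claimed identity are rigid-analytic functions on $\cW_{\bff\bfg\bfh}^{\circ}$, and that a dense set of points in the weight space suffices to determine such a function.

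First I would restrict attention to the points $(k,\ell,m)\in\cW_{\bff\bfg\bfh}^{g}$, which are dense in $\cW_{\bff\bfg\bfh}^{\circ}$ and lie in the region of interpolation for $\L_p^g(\ubff,\ubfg,\ubfh)$. At such a point I would plug in Hsieh's interpolation formula (Theorem \ref{thm: Hsieh interpolation formula}) on the left to express $\L_p^g(\ubff,\ubfg,\ubfh)^2(k,\ell,m)$ in terms of $L(f_k\otimes g_\ell\otimes h_m, c)$, the Petersson norm $\langle g_\ell,g_\ell\rangle$, the Euler factors $\cE,\cE_0,\cE_1$, the archimedean factor $\fa(k,\ell,m)$, and the product over $\Sigma_\mathrm{exc}$. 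Since $g_\ell=\theta(\psi_{g,\ell-1})$ and $g_\ell^*=g_\ell\otimes\chi_g^{-1}$, I would invoke \cite[Lemma 3.7]{DLR} to rewrite $\langle g_\ell^*,g_\ell^*\rangle$ in terms of $L(\Psi_g(\ell),0)$ via the factor $\ff_3(g_\ell)$, and then $\ff_2(g_\ell)$ to pass to the imprimitive value $L_\fc(\Psi_g(\ell),0)$, which is what Katz's $p$-adic $L$-function interpolates via Proposition \ref{prop: interpolation property for katz padic Lfunction}. Next I would split the central value $L(f_k\otimes g_\ell\otimes h_m,c_0)$ using the factorization \eqref{eq: factorisation of complex Lfunctions} into the product of the two Rankin values $L(f_k,\Psi_{gh}(k,\ell,m)^{-1},0)$ and $L(f_k,\Psi_{gh'}(k,\ell,m)^{-1},0)$ times $\ff_1$, and replace each of these using the interpolation formula for $\L_p(f_k,K)$ (Proposition \ref{prop: BDP interpolation formula}) together with the fact that $\L_p(K,\bff)(k,\psi)^2=\L_p(f_k,K)(\psi)$. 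Finally I would handle Katz's side on the left by the same interpolation formula applied to $\Psi_g(\ell)$.

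At this point both sides of the purported identity will have been written as explicit products of: (i) the same complex $L$-values, which cancel by \eqref{eq: factorisation of complex Lfunctions}; (ii) various period ratios $(\Omega_p/\Omega)^{\bullet}$; and (iii) the elementary factors $\fa$, $\fe$, $\ff$, $\cE$, $\cE_0$, $\cE_1$, $\Gamma_\C$-factors, and the correction terms $\ff_1,\ff_2,\ff_3$. The key bookkeeping step is to check that the period exponents match: on the left the Katz factor contributes $(\Omega_p/\Omega)^{2\ell}$ (from infinity type $(0,\ell)$ and $(\ell,0)$ after using the functional equation \eqref{eq: functional equation Katz padic Lfuncion}), while on the right each BDP factor $\L_p(f_k,K)(\Psi_{gh})$ contributes $(\Omega_p/\Omega)^{2(k+2j)}$ with the relevant infinity type, and one verifies the total exponents on the two sides agree using $k+\ell+m$ even and the explicit infinity types of $\Psi_{gh}$ and $\Psi_{gh'}$. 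Everything that does not cancel after matching $L$-values and periods is, by definition, absorbed into $\ff(k,\ell,m)$; concretely $\ff(k,\ell,m)$ is precisely the ratio of the elementary factors forced by this comparison, and the formula in part $i)$ of the statement is simply the record of that ratio. Having verified the identity on the dense set $\cW_{\bff\bfg\bfh}^{g}$, I would conclude by a rigidity/analytic-continuation argument: all four $p$-adic $L$-functions, as well as the correction function $\ff$, extend to rigid-analytic functions on a neighborhood of $\cW_{\bff\bfg\bfh}^{\circ}$ in the appropriate weight space, so an identity valid on a Zariski-dense set of points holds identically.

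The main obstacle I anticipate is the precise matching of the auxiliary factors—the periods $\Omega_p/\Omega$, the $\Gamma_\C$ and $\pi$-powers, the Gauss-sum-type scalars $\omega(f,\psi)$, and the Euler factors at bad primes $\ff_1$—and in particular keeping consistent conventions across three different constructions (Hsieh's, BDP/Castell\`a's, and Katz's), each normalized slightly differently. One must be careful that the Euler factors $\cE(f_k,g_\ell,h_m)$, $\cE_0(g_\ell)$, $\cE_1(g_\ell)$ appearing in Hsieh's formula factor correctly through the local factors $\fe(f_k,\Psi_{gh})$, $\fe(f_k,\Psi_{gh'})$ and the local factor in Katz's formula once one uses that $p$ splits in $K$ and that $g_\ell,h_m$ are ordinary theta series, so that $\alpha_{g_\ell}=\psi_{g,\ell-1}^{(p)}(\p)$ and $\beta_{g_\ell}=\psi_{g,\ell-1}^{(p)}(\bar\p)$ (and similarly for $h$); this is where the structural assumptions \eqref{eq:characters}, $\cN_i=\cN_{\chi_f}$, and the finite type $(c_i,\cN_f,\chi_f)$ of $\psi_i$ are used. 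These are all finite computations with no conceptual difficulty, but they are where errors are easiest to make, and they are what determines the exact shape of $\ff(k,\ell,m)$ recorded in the statement.
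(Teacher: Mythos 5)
Your proposal is correct and follows essentially the same route as the paper's proof: specialize to the dense interpolation region $\cW_{\bff\bfg\bfh}^{g}$, combine Hsieh's interpolation formula with the complex factorization \eqref{eq: factorisation of complex Lfunctions}, the BDP and Katz interpolation formulas (checking the infinity types of $\Psi_{gh}$, $\Psi_{gh'}$, $\Psi_g(\ell)$ land in the respective interpolation ranges), match the $\Gamma$-factors, periods and Euler factors (the paper records these as the identities $\mathcal{E}(f_k,g_\ell,h_m)=\fe(k,\Psi_{gh})\fe(k,\Psi_{gh'})$ and $\fe(\Psi_g(\ell))=\mathcal{E}_0(g_\ell)\mathcal{E}_1(g_\ell)$), and conclude by density/continuity. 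The only quibble is a bookkeeping slip in the exponent of $\Omega_p/\Omega$ on the Katz side ($\Psi_g(\ell)$ has infinity type $(\ell,2-\ell)$, giving exponent $2\ell-2$ per factor, not $2\ell$), which falls squarely in the category of computations you already flagged as requiring care.
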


\begin{proof}
	By Theorem \ref{thm: Hsieh interpolation formula}, for each  $(k,\ell,m)\in\cW_{\bff\bfg\bfh}^{g}$ we have
	\begin{alignat*}{2}
	L(f_k\otimes g_\ell\otimes h_m,c_0) = & \L^g_p{(\ubff,\ubfg,\ubfh)}(k,\ell,m)^2\dfrac{(-4)^\ell\langle g_\ell,g_\ell\rangle^2\mathcal{E}_0(g_\ell)\mathcal{E}_1(g_\ell)}{\mathcal{E}(f_k,g_\ell,h_m)^2}\times\\
	& \times\dfrac{1}{\fa(k,\ell,m)}\dfrac{1}{\prod_{q\in\Sigma_\mathrm{exc}}(1+q^{-1})}.
	\end{alignat*}
	Let $c$ be the smallest positive integer in $\fc$, then if $(x,y,z)\in\cW_{\bff\bfg\bfh}^{g}$, the characters $\Psi_{gh}(k,\ell,m)$ and $\Psi_{gh'}(k,\ell,m)$ belong to $\Sigma(c,\cN_f,\chi_f)^{(2)}$. Indeed, $L(f_k,\Psi_{gh}(k,\ell,m)^{-1},0)=L(f_k,\psi_{g,\ell-1}\psi_{h,m-1},c_0)$, and $c_0$ is central critical for this complex $L$-function. Moreover, $\Psi_{gh}(k,\ell,m)$ has infinity type 
	\begin{align}\label{eq: infty type 1}
\Big(\frac{k+\ell+m-2}{2}, \frac{k-\ell-m+2}{2} \Big)=(k+j,-j)
\end{align}
 with $j=\frac{-k+\ell+m-2}{2}\geq0$.
	Similarly, $\Psi_{gh'}(k,\ell,m)$ has infinity type 
\begin{align}\label{eq: infty type 2}
\Big(\frac{k+\ell-m}{2}, \frac{k-\ell+m}{2} \Big)=(k+j,-j)
\end{align}
 with $j=\frac{\ell-k-m}{2}\geq0$.
	Then using (\ref{eq: factorisation of complex Lfunctions}) and Proposition \ref{prop: BDP interpolation formula} we obtain 
	\begin{equation}\label{eq: 1}
	\begin{split}
	& \L^g_p{(\ubff,\ubfg,\ubfh)}^2(x,y,z)\big(\dfrac{\Omega}{\Omega_p}\big)^{4-4\ell}\langle g_\ell,g_\ell\rangle^2  \\ & =\L_p(K,\bff)(k,\Psi_{gh}(k,\ell,m))^2\L_p(K,\bff)(k,\Psi_{gh'}(k,\ell,m))^2\times\dfrac{\prod_{q\in\Sigma_\mathrm{exc}}(1+q^{-1})\ff_1(f_k,g_\ell,h_m)}{(-4)^\ell\ff(k,\Psi_{gh}(k,\ell,m))\ff(k,\Psi_{gh'}(k,\ell,m))}\\ 
	& \times \dfrac{\fa(k,\ell,m)}{\fa(\Psi_{gh}(k,\ell,m))\fa(\Psi_{gh'}(k,\ell,m))}
	 \times\dfrac{\mathcal{E}(f_k,g_\ell,h_m)^2}{\mathcal{E}_0(g_\ell)^2\mathcal{E}_1(g_\ell)^2\fe(k,\Psi_{gh}(k,\ell,m))^2\fe(k,\Psi_{gh'}(k,\ell,m))^2}.
	\end{split}
	\end{equation}
	
	On the other hand, the character $\Psi_g(\ell)$ has infinite type $(\ell,2-\ell)$ and conductor dividing $\fc$, so for $\ell\geq2$ it belongs to $\Sigma(\fc)^{(2)}$. Substituting \cite[(53) and Lemma 3.7]{DLR} in the interpolation formula of Proposition  \ref{prop: interpolation property for katz padic Lfunction}, we obtain, for each $\ell\geq2$, 
	\begin{equation}\label{eq:2}
	L_p(K)(\Psi_g(\ell)) = \fa(\Psi_g(\ell))\fe(\Psi_g(\ell))\dfrac{\ff(\Psi_g(\ell))}{\ff_2(\ell)\ff_3(\ell)}\Big( \frac{\Omega_p}{\Omega} \Big)^{2\ell-2}\langle g_\ell,g_\ell\rangle\frac{\pi^\ell}{(\ell-1)!}. 
	\end{equation}
	Plugging (\ref{eq:2}) into (\ref{eq: 1}) it follows that:
	\begin{equation*}
	\begin{split}
&	\L^g_p(\ubff,\ubfg,\ubfh)(x,y,z)^2\L_p(K)(\Psi_g(\ell))^2=  \L_p(K,\bff)(k,\Psi_{gh}(k,\ell,m))^2\L_p(K,\bff)(k,\Psi_{gh'}(k,\ell,m))^2\\
	& \times\dfrac{\ff(k,\ell,m)\pi^{2\ell}\fa(\Psi_g(\ell))^2\fa(f_k,g_\ell,h_m)}{2^4[(\ell-1)!]^2\fa(\Psi_{gh}(k,\ell,m))\fa(\Psi_{gh'}(k,\ell,m))}
	 \times\dfrac{\fe(\Psi_g(\ell))^2\mathcal{E}(f_k,g_\ell,h_m)^2}{\mathcal{E}_0(g_\ell)^2\mathcal{E}_1(g_\ell)^2\fe(k,\Psi_{gh}(k,\ell,m))^2\fe(k,\Psi_{gh'}(k,\ell,m))^2}.
	\end{split}
	\end{equation*}
	Then the statement of the theorem follows from the identities
	\begin{enumerate}[\indent $i)$]
		\item $\dfrac{\pi^{2\ell}\fa(\Psi_g(\ell))^2\fa(f_k,g_\ell,h_m)}{[(\ell-1)!]^2\fa(\Psi_{gh}(k,\ell,m))\fa(\Psi_{gh'}(k,\ell,m))}=2^4$,
		\item $\mathcal{E}(f_k,g_\ell,h_m)=\fe(k,\Psi_{gh}(k,\ell,m))\fe(k,\Psi_{gh'}(k,\ell,m))$,
		\item $\fe(\Psi_g(\ell))=\mathcal{E}_0(g_\ell)\mathcal{E}_1(g_\ell)$,
	\end{enumerate}
	and by continuity.
\end{proof}
As we will see, the proof of Theorem \ref{thm: special case weight (k,1,1)} follows from evaluating the formula of Theorem \ref{thm: factorisation of padic Lfunctions} at weights $(k,1,1)$. Since it will be needed, we record the following property on the field of definition of $\ff(k,1,1)$.
\begin{proposition}\label{prop: f is admissible}
 If $k$ is even then $\ff(k,1,1)\mathfrak{g}(\chi_f)^2$ belongs to $ L$.
\end{proposition}
\begin{proof}
It follows readily from the definitions that the several factors that enter into the definition of $\ff(k,1,1)$ belong to $L$, except the factors $\ff(k,\Psi_{gh}(k,1,1))$ and $\ff(k,\Psi_{gh'}(k,1,1))$. Indeed, these factors are defined in terms of certain scalars $\omega(f_k,\Psi_{gh}(k,1,1))$ and $\omega(f_k,\Psi_{gh'}(k,1,1))$. By \cite[(5.1.11)]{BDP1}, we have that
\begin{align*}
\omega(f_k,\Psi_{gh}(k,1,1))\omega(f_k,\Psi_{gh'}(k,1,1))=\dfrac{\omega_f^2(-N_f)^{\ell-1}\norm_{K/\Q}(\mathfrak{b})^{\ell-1}}{\chi_f(\norm_{K/\Q}(\mathfrak{b}))^2\psi_g(\mathfrak{b})^2{\psi_h(\mathfrak{b})\psi'_h(\mathfrak{b})}b^{2\ell+2}}.
\end{align*}

Here $\mathfrak{b}$ is an choice of an ideal of $\cO_c$ prime to $pN_fc$ and $\mathfrak{b}\cdot\cN_f=(b)$, and $w_f$ is the scalar such that $W_{N_f}  f^*_k = w_f f_k$ (here $W_{N_f}$ is the Atkin--Lehner involution).  The statement then follows from \cite[Theorem 2.1]{atkin-li}, which implies, when $k$ is even, that $w_f\mathfrak{g}(\chi_f)$ belongs to $L$.

\end{proof}

\subsubsection{Proof of Theorem \ref{thm: special case weight (k,1,1)}}\label{sec: proof of the special case} In this paragraph we will use the notation and assume all the hypothesis of  \S\ref{sec: statement of the particular case}. In particular, $f$ is a normalised cuspidal newform of weight $k\geq2$ and $g,h$ are theta series of the finite order Hecke characters $\psi_g,\psi_h$ of the imaginary quadratic field $K$ in which the prime number $p$ splits.
Let $\bff$ the Hida family passing through the only ordinary $p$-stabilisation $f_\alpha$ of $f$, let $\bfg,\bfh$ be the Hida families of  theta series of \S\ref{sec: Hida families of Theta series} passing through $g_\alpha$ and $h_\alpha$ respectively and let $(\ubff,\ubfg,\ubfh)$ be the choice of test vectors of Theorem \ref{thm: Hsieh interpolation formula}. Then, evaluating the factorisation formula of Theorem \ref{thm: factorisation of padic Lfunctions} at $(k,1,1)$ and taking square roots we obtain:
$$\L_p^g(\ubff,\ubfg,\ubfh)(k,\ell,m)\L_p(K)(\Psi_g(1))=\L_p(K,\bff)(k,\Psi_{gh}(k,1,1))\L_p(K,\bff)(k,\Psi_{gh'}(k,1,1))\ff'(k,1,1),$$
where $\ff'(k,1,1):=\sqrt{\ff(k,1,1)}$.
Then the statement of Theorem  \ref{thm: special case weight (k,1,1)} follows applying Theorem \ref{thm: katz padic Lfunction and elliptic units} and Theorem \ref{thm BDP main thm1}, and Proposition \ref{prop: f is admissible}, after observing that: 
\begin{enumerate}[\indent $i)$]
	\item $\Psi_g(1)=\psi\norm_K$, where $\psi:=\psi_g'/\psi_g$ has finite order and it is selfdual, so that, by (\ref{eq: functional equation Katz padic Lfuncion}, $\L_p(K)(\Psi_g(1))=\L_p(K)(\psi)$.
	\item $\Psi_{gh}(k,1,1)=(\psi_g\psi_h)^{-1}\norm_K^{k/2} =\psi_1^{-1}\norm_K^\frac{k-r}{2}$ and $\psi_1^{-1}=(\psi_g\psi_h)^{-1}$ has infinity type $(r-j,j)$ with $r:=0$, $j:=0$, and analogously for $\Psi_{gh'}(k,1,1)$.
\end{enumerate}

\section{ The conjecture for general unbalanced weights}\label{sec: general unbalanced weights}
Let $(f,g,h)$ be a triple of normalised newforms of levels $(N_f, N_g, N_h)$ and weights $(k,\ell,m)$ with $k\geq\ell+m$ and $k,\ell,m\geq2$, and fix a prime number $p$ such that $p\nmid N_gN_h$ and $\ord_p(N_f)\leq1$ and assume that $f,g$ and $h$ are ordinary at $p$. The aim of this section is to formulate a version of the Elliptic Stark Conjecture for $(f,g,h)$ in this setting, and to give some theoretical evidence for the conjecture in a special case. More precisely, in \S\ref{sec: statement of the conjecture in more general weights}, we state the conjecture, and in \S\ref{sec: statement of the particular case higher general weights} we focus on the case in which $g$ and $h$ are theta series of an imaginary quadratic field where the prime $p$ splits. In this setting, we prove a formula relating the value $\L_p^g(\bff,\bfg,\bfh)(k,\ell,m)$ to the $p$-adic Abel--Jacobi image of certain generalised Heegner cycles, using the factorisation of Theorem \ref{thm: factorisation of padic Lfunctions} and the result of \cite{BDP2} that we stated as Theorem \ref{thm BDP main thm1}. Finally, in \S\ref{subsection: The proof of a special case} we particularize the formula to a triple of forms $(f,g,h)$ satisfying some additional hypothesis in order to obtain a proof of the conjecture for such triple, conditional on the validity of Tate's conjecture for motives and of certain standard conjectures on the $p$-adic Abel--Jacobi map.

\subsection{Statement of the conjecture}\label{sec: statement of the conjecture in more general weights} We begin by recalling some notation and terminology related to motives.  We refer to \cite{scholl} for further details. For two number fields $K$ and $F$, denote by $\cM(K)_F$ the category of Chow motives over $K$ with coefficients in $F$. The objects of $\cM(K)_F$ are triples $(V,q,m)$ where $V$ is a smooth projective scheme over $K$, $q=q^2$ is a projector in the ring of correspondences of $V$ tensored with $F$, and $m$ is an integer. 
For $i=1,2$, let $M_i:=(V_i,q_i,m_i)$  be an object of $\cM(K)_F$ and assume that $V_1$ is purely of dimension $d_1$; the morphisms from $M_1$ to $M_2$ are defined in terms of correspondences between the underlying varieties as
\begin{equation*}
\hom(M_1,M_2):=q_1\circ\operatorname{Corr}^{m_2-m_1}(V_1,V_2)\circ q_2,
\end{equation*}
where $\operatorname{Corr}^{m_2-m_1}(V_1,V_2):=\CH^{d_1+m_2-m_1}(V_1\times V_2)\otimes_\Z F$.
Let  $\mathbb{L}:=(\operatorname{Spec}(K),\operatorname{id},-1)$ be the Lefschetz motive and let $d$ be an integer. We denote  $\mathbb{L}^d:=\mathbb{L}^{\otimes^{d}}$ the tensor product of $\mathbb{L}$ with itself $d$ times. The \textit{Chow group} of a motive $M\in \cM(K)_F$ is defined as 
\begin{equation}\label{eq: chow group of motives}
\CH^d(M):=\hom(\mathbb{L}^{d},M).
\end{equation}

The Chow group of a motive can also be interpreted as a group of cycles, since  
\begin{equation*}
\CH^d((V,q,m))\cong q\cdot\CH^{d+m}(V/K)_F.
\end{equation*}
Then $\CH^d(M)_0$ is defined as the subgroup of the null-homologous cycles of $\CH^d(M)$.  We will occasionally use the notation $\CH^d(M)_{0,F}$ if we need to emphasize the field of coefficients of the Chow group.

Let $(f,g,h)$ be a triple of forms of weights $(k,\ell,m)$ with $k\geq\ell+m$ and $k,\ell,m\geq2$, and let $L$ be a number field that contains the Fourier coefficients of $f$, $g$, and $h$.
The motive attached to $f\otimes g\otimes h$ is the object of $\cM(\Q)_L$ obtained as the tensor product of motives attached to $f$, $g$, and $h$: $$M(f\otimes g \otimes h):=M_f\otimes M_g\otimes M_h,$$ whose underlying variety  is 
\begin{align*}
X := W_{k-2}\times W_{\ell-2}\times W_{m-2}.
\end{align*}

 Put $c:= (k+\ell+m-2)/2$ and suppose that    $$\dim_L\CH^c(M(f\otimes g \otimes h))_{0,L}=2.$$ Under this assumption, we can define the following regulator attached to $(f,g,h)$. 

\begin{definition}
Let $ \Delta_1,\Delta_2 $ be a basis of $\CH^c(M(f\otimes g \otimes h))_{0,L}$.  The \textit{regulator} attached to $(f,g,h)$ is
\begin{align}\label{eq regulator higher weights}
\operatorname{Reg}(f,g,h):=\left|
\begin{array}{rr}
\AJ_p(\Delta_1)(\omega_f\wedge \eta_g\wedge\omega_h) &  \AJ_p(\Delta_1)(\omega_f\wedge \eta_g\wedge\eta_h)\\
\AJ_p(\Delta_2)(\omega_f\wedge \eta_g\wedge\omega_h) &  \AJ_p(\Delta_2)(\omega_f\wedge \eta_g\wedge\eta_h)
\end{array}
\right|,
\end{align}
where $\omega_f,\eta_g,\eta_h,\omega_h$ are the de Rham classes defined in \S\ref{sec: Modular forms and the cohomology of Kuga--Sato varieties}.
\end{definition}

\begin{remark}
  Since the definition of regulator involves the choice of an $L$-basis of $\CH^c(M(f\otimes g \otimes h))_{0,L}$, it is only defined up to multiplication by an element of $L^\times$. 

Denote $\bff,\bfg,\bfh$ the Hida families passing through the ordinary $p$-stabilisations of $f,g$ and $h$. The following is the analog of the Elliptic Stark Conjecture in this setting.
\end{remark}
\begin{conjecture}\label{conj: more general weights} Set $r: = \dim_L\CH^c(M(f\otimes g \otimes h))_{0,L}$.
	\begin{enumerate}[\indent $i)$]
		\item If $r> 2$, then $\L^g_p(\ubff,\ubfg,\ubfh)(k,\ell,m)=0$ for any choice of test vectors $(\ubff,\ubfg,\ubfh)$ for $(\bff,\bfg,\bfh)$;
		\item if $\ord_{s=c}L(f\otimes g\otimes h,s)=2$, then  there exists a finite extension $L_0$ of $L$, a triple of test vectors
		\begin{align*}
		(\breve f, \breve g_\alpha, \breve h)\in S_k(Np, \chi_f)_L[f]\times M_\ell(Np,\chi_g)_L [g_\alpha]\times M_m(Np,\chi_h)_L[h]  
		\end{align*}
		and Hida families $\ubff,\ubfg,\ubfh$ with $f_k = \breve f$, $g_\ell = \breve g$, $h_m = \breve h$  such that
		\begin{equation}\label{eq: conjecture (k,l,m)}
	\L^g_p(\ubff,\ubfg,\ubfh)(k,\ell,m)=\mathrm{Reg}(f,g,h) \mod L_0^\times.
		\end{equation} 
	\end{enumerate}
\end{conjecture}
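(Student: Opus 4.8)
As stated, Conjecture~\ref{conj: more general weights} cannot be proved in general; what I will sketch is a strategy for the special case the paper singles out, namely $g=\theta(\psi_g)$ and $h=\theta(\psi_h)$ theta series of the same imaginary quadratic field $K$ in which $p$ splits, conditional on Tate's conjecture for the motive $M(f\otimes g\otimes h)$ and on the expected non-degeneracy of the $p$-adic Abel--Jacobi maps. The plan mirrors \S\ref{sec: a factorization formula} line by line, the one new feature being the $A^{r}$-factor ($r:=\ell+m-2$) carried by the generalized Heegner cycles of \S\ref{sec: Generalised Heegner cycles}. First, impose on $\psi_g,\psi_h$ exactly the conditions of \S\ref{sec: statement of the particular case}, so that $V_{gh}=V_{\psi_1}\oplus V_{\psi_2}$ with $\psi_1=\psi_g\psi_h$, $\psi_2=\psi_g\psi_h'$, all finite local signs of the factors $L(f/K,\psi_i,s)$ equal $+1$, the global signs equal $-1$, whence $\ord_{s=c}L(f\otimes g\otimes h,s)\geq2$. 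Let $\bff$ pass through $f_\alpha$ and $\bfg,\bfh$ be the Hida families of theta series through $g_\alpha,h_\alpha$, and let $(\ubff,\ubfg,\ubfh)$ be Hsieh's test vectors. Evaluating the factorization of Theorem~\ref{thm: factorisation of padic Lfunctions} at $(k,\ell,m)$ and taking square roots gives
\[
\L_p^g(\ubff,\ubfg,\ubfh)(k,\ell,m)\,\L_p(K)(\Psi_g(\ell))=\L_p(K,\bff)(k,\Psi_{gh}(k,\ell,m))\,\L_p(K,\bff)(k,\Psi_{gh'}(k,\ell,m))\,\ff'(k,\ell,m),
\]
with $\ff'(k,\ell,m):=\sqrt{\ff(k,\ell,m)}$; arguing as in Proposition~\ref{prop: f is admissible} one checks that $\ff'(k,\ell,m)$ lies in a fixed finite extension $L_0$ of $L$ (no Gauss sum is needed here, since none appears in the conjectural formula for dominant weight).

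Next I would convert the two Castell\`a values into Abel--Jacobi images. Since $\L_p(K,\bff)(k,\Psi)^2=\L_p(f_k,K)(\Psi)$, Theorem~\ref{thm BDP main thm1} applies: each of $\Psi_{gh}(k,\ell,m)$ and $\Psi_{gh'}(k,\ell,m)$ equals $\psi\cdot\norm_K^{(k-r)/2}$ for a character $\psi\in\Sigma(c,\cN_f,\chi_f)^{(1)}$ of infinity type $(r-j,j)$ for the appropriate $j\in\{0,\dots,r\}$, and the theorem expresses $\L_p(f_k,K)$ at these points as an explicit Euler-factor-and-period multiple of $\bigl(\AJ_p(\Delta_{k-2,r,c}^{\psi})(\omega_f\wedge\omega_A^{j}\eta_A^{r-j})\bigr)^2$. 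Under the CM identification of $V_g$ (resp. $V_h$) with the $\psi_{g,\ell-1}$- (resp. $\psi_{h,m-1}$-) isotypic part of $\mathrm{Sym}^{\ell-1}\coh^1_{\derham}(A)$ (resp. $\mathrm{Sym}^{m-1}\coh^1_{\derham}(A)$), the unit-root class $\eta_g$ corresponds to $\eta_A^{\ell-1}$ and $\omega_h$ (resp. $\eta_h$) to $\omega_A^{m-1}$ (resp. $\eta_A^{m-1}$), so the two differentials $\omega_f\wedge\omega_A^{j}\eta_A^{r-j}$ occurring above are precisely $\omega_f\wedge\eta_g\wedge\omega_h$ and $\omega_f\wedge\eta_g\wedge\eta_h$, the two entries of the regulator \eqref{eq regulator higher weights}.

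Now comes the period bookkeeping. Substituting for $\L_p(K)(\Psi_g(\ell))$ the interpolation formula of Proposition~\ref{prop: interpolation property for katz padic Lfunction} together with \cite[Lemma 3.7]{DLR} — as in the derivation of \eqref{eq:2}, which exhibits it as an algebraic multiple of $(\Omega_p/\Omega)^{2\ell-2}\langle g_\ell,g_\ell\rangle\,\pi^{\ell}/(\ell-1)!$ — and matching it against the period factors $\Omega_p^{\,r-2j}$ produced by Theorem~\ref{thm BDP main thm1}, the complex and $p$-adic periods, the Petersson norms and the powers of $\pi$ all cancel exactly as in the proof of Theorem~\ref{thm: special case weight (k,1,1)} (the $A^{r}$-factor accounting for the extra $\Omega_p^{\,r}$). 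What remains is
\[
\L_p^g(\ubff,\ubfg,\ubfh)(k,\ell,m)=\lambda\cdot\AJ_p\bigl(\tilde\Delta_{k-2,r,c_1}^{\bar\psi_1}\bigr)(\omega_f\wedge\eta_g\wedge\omega_h)\cdot\AJ_p\bigl(\tilde\Delta_{k-2,r,c_2}^{\bar\psi_2}\bigr)(\omega_f\wedge\eta_g\wedge\eta_h)
\]
for some $\lambda\in L_0^\times$, where the $\tilde\Delta_{k-2,r,c_i}^{\bar\psi_i}\in\CH^{c}(M(f\otimes g\otimes h))_{0,L}$ are the generalized Heegner cycles of \S\ref{sec: Generalised Heegner cycles} attached to the weight-adjusted characters of $\psi_1,\psi_2$.

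Finally I must recognize the right-hand side as $\mathrm{Reg}(f,g,h)$ modulo $L_0^\times$, and this is where the main obstacle lies. The decomposition $V_{gh}=V_{\psi_1}\oplus V_{\psi_2}$ induces $\CH^{c}(M(f\otimes g\otimes h))_{0,L}\cong\Hom_{G_\Q}(V_{\psi_1},\CH^{c}(M(f\otimes g\otimes h))_{0,L})\oplus\Hom_{G_\Q}(V_{\psi_2},\CH^{c}(M(f\otimes g\otimes h))_{0,L})$, and $\tilde\Delta_{k-2,r,c_i}^{\bar\psi_i}$ lives in the $i$-th summand; the content is (a) that $\ord_{s=c}L(f\otimes g\otimes h,s)=2$ forces each summand to be one-dimensional, and (b) that it is spanned by the Heegner-cycle class, whose Abel--Jacobi image is nonzero. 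Part (b) follows from Theorem~\ref{thm BDP main thm1} once $\L_p(f_k,K)$ is known not to vanish at the relevant point, i.e. once a generalized Gross--Zagier/Kolyvagin picture is available for weights $\ell,m\geq2$ — which is not known unconditionally, and is exactly why one invokes Tate's conjecture for $M(f\otimes g\otimes h)$ (to control $\CH^{c}$ via its $p$-adic \'etale realization, where Galois-cohomology rank bounds apply) together with the conjectural injectivity of $\AJ_p$. Granting these, $(\tilde\Delta_{k-2,r,c_1}^{\bar\psi_1},\tilde\Delta_{k-2,r,c_2}^{\bar\psi_2})$ is an $L$-basis of $\CH^{c}(M(f\otimes g\otimes h))_{0,L}$, and since the two cycles sit in \emph{distinct} isotypic summands the regulator determinant \eqref{eq regulator higher weights} computed in this basis reduces to the product of its two diagonal entries, namely the right-hand side above, up to $L^\times$ (the analogue of \cite[Lemma 3.2]{DLR}); this proves part (ii), and part (i) comes out along the way, for if $r>2$ the same rank bounds force one of $\L_p(f_k,K)(\Psi_{gh}(k,\ell,m))$, $\L_p(f_k,K)(\Psi_{gh'}(k,\ell,m))$ to vanish. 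The hard step is thus entirely on the motivic side: without Tate's conjecture one cannot pass from the analytic data (the order of vanishing of the triple $L$-function) to the structure of $\CH^{c}(M(f\otimes g\otimes h))_{0,L}$, which is why the result can only be conditional.
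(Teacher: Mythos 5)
Your overall strategy is the right one, and the first half of your argument --- evaluating the factorization of Theorem \ref{thm: factorisation of padic Lfunctions} at $(k,\ell,m)$ and converting the two Castell\`a values into Abel--Jacobi images of generalized Heegner cycles via Theorem \ref{thm BDP main thm1} --- is exactly what the paper does; it is Proposition \ref{prop: main formula in the general case}. The genuine gap is in your last step, where you ``recognize the right-hand side as $\mathrm{Reg}(f,g,h)$.'' The cycles $\tilde\Delta^{\psi_i^{-1}}$ live on $W_{k-2}\times A^{\ell+m-2}$, whereas $\mathrm{Reg}(f,g,h)$ is a determinant of Abel--Jacobi images of a basis of $\CH^c(M(f\otimes g\otimes h))_{0,L}$, i.e.\ of cycles on $W_{k-2}\times W_{\ell-2}\times W_{m-2}$, paired against $\omega_f\wedge\eta_g\wedge\omega_h$ and $\omega_f\wedge\eta_g\wedge\eta_h$. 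The decomposition you invoke, $\CH^c(M(f\otimes g\otimes h))_{0,L}\cong\Hom_{G_\Q}(V_{\psi_1},\cdot)\oplus\Hom_{G_\Q}(V_{\psi_2},\cdot)$, is transplanted from the weight-$(k,1,1)$ setting and is not how the higher-weight regulator is organized. What is actually needed is an isomorphism of \emph{motives} $M(g\otimes h)\simeq\Res_{K/\Q}M(\tilde\psi_1)\oplus\Res_{K/\Q}M(\tilde\psi_2)$, which Tate's conjecture supplies from the factorization of $L$-functions, together with the explicit hypothesis that $\AJ_p$ is compatible with the induced maps $\beta_{\CH}$ and $\beta_\dR$ (the paper's Assumption \ref{ass: naturality of abel-jacobi}). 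Moreover, tracking Hodge filtrations through this isomorphism sends $\omega_f\wedge\eta_g\wedge\eta_h$ to a class on $W_{k-2}\times A^{\ell+m-2}$ but sends $\omega_f\wedge\eta_g\wedge\omega_h$ to a class on the Tate-twisted summand, realized on $W_{k-2}\times A^{\ell-m}$; so the factor $\AJ_p(\tilde\Delta^{\psi_2^{-1}})(\omega_f\wedge\omega_A^{m-1}\eta_A^{\ell-1})$ must still be converted into $\AJ_p(\tilde\Delta^{\psi_2^{-1}\norm_K^{1-m}})(\omega_f\wedge\eta_A^{\ell-m})$ by an explicit correspondence between the two varieties (Proposition \ref{prop: abel jacobi of different cycles}). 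Your one-line ``CM identification'' of $\eta_g,\omega_h,\eta_h$ with powers of $\eta_A,\omega_A$ hides all of this, and it is precisely where the conditional hypotheses enter.

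Two further points. First, the paper only carries out this identification under hypotheses far more restrictive than ``theta series of $K$'': class number one, $\psi_g=\psi_A^{\ell-1}$ and $\psi_h=\psi_A^{m-1}$ powers of the Hecke character of a CM elliptic curve over $\Q$, $\ell>m\geq 2$, and a condition on $D_K$ (Assumption \ref{ass: condition on DK}); in the generality of \S\ref{sec: statement of the particular case higher general weights} only the analogue of your displayed formula is established, not the link with $\mathrm{Reg}(f,g,h)$. Second, your claim that part (i) ``comes out along the way'' is unjustified: to conclude that $r>2$ forces one of the Bertolini--Darmon--Prasanna values to vanish you would need to know that a Chow group of rank greater than one forces the corresponding Heegner cycle to have vanishing Abel--Jacobi image, a higher-weight Gross--Zagier/Kolyvagin statement that is not available; the paper proves nothing about part (i).
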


\subsection{The case of theta series of imaginary quadratic fields}\label{sec: statement of the particular case higher general weights}  Let $K$ be an imaginary quadratic field of discriminant coprime to $N_f$ in which the prime $p$ splits as $p\cO_K=\fp\bar{\fp}$ and such that the pair $(K,N_f)$ satisfies the Heegner Hypothesis (cf. Assumption \ref{ass: HH}). In this subsection we consider the case in which $g$ and $h$ are theta series of two Hecke characters $\psi_g,\psi_h$ of $K$. We will use the same notations and assume the same hypothesis of \S\ref{sec: statement of the particular case}, with only two differences. The first one is that in \S \ref{sec: statement of the particular case} the characters $\psi_g,\psi_h$ where assumed to be of infinity type $(0,0)$, whereas we now suppose that they are of infinity type $(0,\ell-1), (0, m-1)$ for some $\ell,m\geq 2$. The second difference is that now we will define the characters $\psi_1$ and $\psi_2$ to be:
\begin{align}
\psi_1 := \psi_g \psi_h \norm_{K}^{2-\ell-m},\ \ \psi_2 := \psi_g \psi_h' \norm_{K}^{2-\ell-m}.
\end{align}

As in \S\ref{sec: statement of the particular case}, we  assume that for $i=1,2$ the conductor of $\psi_i$ is of the form $c_i\cN_i$ with  $c_i\in\Z$ coprime to $D_KN_{f}$ and $\cN_i\mid \cN_{\chi_f}$. 
Using the notation introduced in \S\ref{sec: Generalised Heegner cycles}, we consider the generalized Heegner cycles 
\begin{align}\label{eq: cycles general weight}
\tilde \Delta^{\psi_i^{-1}}:=\tilde \Delta_{k-2,\ell+m-2,c_i}^{\psi_i^{-1}}\in \CH^{c}(W_{k-2}\times A^{\ell+m-2}/H_{c_i,f})_{0,L},
\end{align}
where $A$ is an elliptic curve defined over the Hilbert class field $K_1$ of $K$ with CM by $\cO_K$ that we fix once and for all.

\begin{proposition}\label{prop: main formula in the general case} 
	Let $(\ubff,\ubfg,\ubfh)$ be the choice of test vector of Theorem \ref{thm: Hsieh interpolation formula}. There exist a quadratic extension $L_0/L$ and $\lambda\in L_0$ such that
	\begin{equation}\label{eq: main formula in the general case}
	\L_p^g(\ubff,\ubfg,\ubfh)(k,\ell,m) =\dfrac{\lambda}{\mu}\cdot \AJ_p(\tilde \Delta^{\psi_1^{-1}})( \omega_f\wedge \eta_A^{\ell + m -2})\cdot \AJ_p(\tilde \Delta^{\psi_2^{-1}})( \omega_f\wedge \eta_A^{\ell-1} \omega_A^{m-1}),
	\end{equation}
	
	where $$\mu:=\Omega^{2-2\ell}\pi^{\ell-2}L(\Psi_g(\ell)^{-1},0)\in\Qbar.$$
\end{proposition}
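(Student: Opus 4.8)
The plan is to imitate the proof of Theorem~\ref{thm: special case weight (k,1,1)}, replacing the classical Heegner cycles with the generalized Heegner cycles of Bertolini--Darmon--Prasanna that naturally appear when $g$ and $h$ have weights $\ell,m\geq 2$. First I would specialize the factorization formula of Theorem~\ref{thm: factorisation of padic Lfunctions} at the point $(k,\ell,m)$ lying in $\cW_{\bff\bfg\bfh}^f$ (i.e.\ with $k$ dominant), obtaining
\begin{equation*}
\L_p^g(\ubff,\ubfg,\ubfh)^2(k,\ell,m)\,\L_p(K)(\Psi_g(\ell))^2=\L_p(K,\bff)(k,\Psi_{gh}(k,\ell,m))^2\,\L_p(K,\bff)(k,\Psi_{gh'}(k,\ell,m))^2\,\ff(k,\ell,m).
\end{equation*}
Taking square roots (which forces the passage to a quadratic extension $L_0$ of $L$, accounting for $\lambda$) reduces the proof to identifying each factor on the right.

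Next I would unwind the infinity types of the Hecke characters in the dominant region. For $(k,\ell,m)\in\cW_{\bff\bfg\bfh}^f$ the character $\Psi_{gh}(k,\ell,m)=\psi_1^{-1}\norm_K^{(k-r_0)/2}$ with $r_0=k-2$, so it has infinity type $(r-j,j)$ with $r=\ell+m-2$ and $j=m-1$ (and symmetrically $\Psi_{gh'}$ has $j=\ell-1$), placing these characters in $\Sigma(c_i,\cN_f,\chi_f)^{(1)}$ rather than $\Sigma^{(2)}$. This is exactly the range covered by Theorem~\ref{thm BDP main thm1} (Bertolini--Darmon--Prasanna, Castell\`a): each value $\L_p(f_k,K)(\Psi_{gh}(k,\ell,m))$ equals, up to the explicit factor $\fe(\cdots)^2\Omega_p^{r-2j}/((j+1)!c^{2j}(4d_c)^{(k-2-r)/2})$, the square of $\AJ_p(\tilde\Delta_{k-2,\ell+m-2,c_i}^{\psi_i^{-1}})(\omega_f\wedge\omega_A^j\eta_A^{r-j})$. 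Choosing $j=m-1$ gives $\omega_A^{m-1}\eta_A^{\ell-1}$ for one factor and $j=\ell-1$ gives $\omega_A^{\ell-1}\eta_A^{m-1}$ for the other; after relabelling (the cycles are symmetric under $\ell\leftrightarrow m$ up to the $\bar\psi$ convention), one matches the two de Rham classes $\omega_f\wedge\eta_A^{\ell+m-2}$ and $\omega_f\wedge\eta_A^{\ell-1}\omega_A^{m-1}$ appearing in the statement. Since Castell\`a's two-variable $p$-adic $L$-function satisfies $\L_p(K,\bff)(k,\psi)^2=\L_p(f_k,K)(\psi)$, taking a square root of Theorem~\ref{thm BDP main thm1} expresses $\L_p(K,\bff)(k,\Psi_{gh}(k,\ell,m))$ directly as the Abel--Jacobi pairing times an explicit factor. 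Finally I would invoke Theorem~\ref{thm: katz padic Lfunction and elliptic units} to rewrite $\L_p(K)(\Psi_g(\ell))$ in terms of $L(\Psi_g(\ell)^{-1},0)$ and the periods via Proposition~\ref{prop: interpolation property for katz padic Lfunction}: here one gets the factor $\mu=\Omega^{2-2\ell}\pi^{\ell-2}L(\Psi_g(\ell)^{-1},0)$. Collecting all the Euler-type and period factors into a single scalar, and checking it lies in $L_0$ (using an analogue of Proposition~\ref{prop: f is admissible} to control the field of definition of $\ff(k,\ell,m)$ and of $\Omega_p$-powers), yields \eqref{eq: main formula in the general case}.

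The main obstacle I expect is bookkeeping the periods and the various Euler factors so that the residual scalar $\lambda/\mu$ is genuinely algebraic and lies in the asserted (at most quadratic) extension of $L$. In Theorem~\ref{thm: special case weight (k,1,1)} the weights $\ell=m=1$ make $r=0$, so no powers of $\Omega_p$ and no classes $\omega_A,\eta_A$ survive; here one must carefully cancel the $\Omega_p^{r-2j}$ contributions from the two applications of Theorem~\ref{thm BDP main thm1} against each other (the exponents $r-2j$ are $\ell-m$ and $m-\ell$, which sum to zero, so this should work) and against the $\Omega^{2-2\ell}$ and $\langle g_\ell,g_\ell\rangle$ terms coming from the Katz factor via \eqref{eq:2}. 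One also has to verify that the $\fe$-factors produced by Theorem~\ref{thm BDP main thm1} combine with the $\mathcal{E}(f_k,g_\ell,h_m)$, $\mathcal{E}_0(g_\ell)$, $\mathcal{E}_1(g_\ell)$ of Hsieh's formula, exactly as in the identities $(ii)$ and $(iii)$ in the proof of Theorem~\ref{thm: factorisation of padic Lfunctions}; these are already built into $\ff(k,\ell,m)$, so the real work is tracking $\Gamma$-factors and powers of $\pi$ in $\fa(k,\ell,m)$ versus $\fa(f_k,\Psi_{gh})$, $\fa(f_k,\Psi_{gh'})$, and $\fa(\Psi_g(\ell))$. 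Beyond this essentially computational core, the structural input is a direct transcription of the weight $(k,1,1)$ argument, so no new ideas should be needed.
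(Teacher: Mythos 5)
Your overall strategy is the paper's: specialize the factorization of Theorem \ref{thm: factorisation of padic Lfunctions} (the paper actually plugs into its intermediate identity \eqref{eq: 1}, which still carries $\langle g_\ell,g_\ell\rangle$ rather than the Katz value, but this is equivalent for $\ell\geq 2$ since $\Psi_g(\ell)$ then lies in Katz's interpolation range), observe that in the $f$-dominant region the characters $\Psi_{gh}(k,\ell,m)$ and $\Psi_{gh'}(k,\ell,m)$ land in $\Sigma^{(1)}$, apply Theorem \ref{thm BDP main thm1} to each, and take square roots. However, your computation of the infinity types is wrong, and it is not a cosmetic slip. Writing $\Psi_{gh}(k,\ell,m)=\psi_1^{-1}\norm_K^{(k-r)/2}$ with $r=\ell+m-2$, the character $\psi_1^{-1}=(\psi_g\psi_h\norm_K^{2-\ell-m})^{-1}$ has infinity type $(\ell+m-2,0)$, i.e.\ $j=0$, not $j=m-1$; and $\psi_2^{-1}$ has infinity type $(\ell-1,m-1)$, i.e.\ $j=m-1$, not $j=\ell-1$. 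These are exactly the values that produce the classes $\omega_f\wedge\eta_A^{\ell+m-2}$ (from $j=0$) and $\omega_f\wedge\omega_A^{m-1}\eta_A^{\ell-1}$ (from $j=m-1$) appearing in the statement. With your values you would get $\omega_A^{m-1}\eta_A^{\ell-1}$ and $\omega_A^{\ell-1}\eta_A^{m-1}$, and no ``relabelling'' converts $\omega_A^{\ell-1}\eta_A^{m-1}$ into $\eta_A^{\ell+m-2}$: these are distinct basis elements of $\mathrm{Sym}^{\ell+m-2}\coh^1_\derham(A)$, so the pairing you end up with is not the one claimed.

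The error propagates into your period bookkeeping. The exponents $r-2j$ from the two applications of Theorem \ref{thm BDP main thm1} are $\ell+m-2$ and $\ell-m$, which sum to $2\ell-2$, not zero; it is precisely this total power $\Omega_p^{2(2\ell-2)}$ (after squaring) that cancels the $(\Omega/\Omega_p)^{4-4\ell}$ on the left-hand side of \eqref{eq: 1}, leaving $\Omega^{4-4\ell}$, which combined with $\langle g,g\rangle^2=\bigl(L(\Psi_g(\ell)^{-1},0)/(\pi^\ell\sqrt{D_K})\bigr)^2$ modulo algebraic squares (the computation of \cite[Lemma 3.7]{DLR}) yields the factor $\mu$. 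Finally, a minor point: Theorem \ref{thm: katz padic Lfunction and elliptic units} concerns finite-order characters and elliptic units and is the relevant tool only in the weight-one case $\ell=1$; for $\ell\geq 2$ what you need is the interpolation property of Proposition \ref{prop: interpolation property for katz padic Lfunction} (or, as the paper does, to bypass the Katz function entirely and keep the Petersson norm until the end).
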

\begin{proof}
	Let $r:=\ell+m-2$. Applying Theorem \ref{thm BDP main thm1} to the characters
	\begin{enumerate}[$i)$]
		\item $\Psi_{gh}(k,\ell,m)=\psi_1^{-1} \norm_K^\frac{k-r}{2}$ where $\psi_1^{-1}$ has infinity type $(r-j,j)$ with $j=0$;
		\item $\Psi_{gh'}(k,\ell,m)=\psi_2^{-1} \norm_K^\frac{k-r}{2}$ where $\psi_2^{-1}$ has infinity type $(r-j,j)$ with $j=m-1$
	\end{enumerate}
	and substituting into equation (\ref{eq: 1}), we obtain 
	
	$	\L_p^g(\ubff,\ubfg,\ubfh)^2(k,\ell,m) \times {\Omega^{4-4\ell}\langle g, g \rangle^2}\dfrac{\fa(k,\Psi_{gh}(k,\ell,m))\fa(k,\Psi_{gh}(k,\ell,m))}{\fa(k,\ell,m)} = $
	\begin{equation*}
	\begin{split}
	=\dfrac{(-1)^\ell\prod_{q\in\Sigma_\mathrm{exc}}(1+q^{-1})}{m!c_2^{2m-2}4^{k-\ell-m}(d_{c_1}d_{c_2})^\frac{k-\ell-m}{2}}\dfrac{\ff_1(f,g,h)}{\ff(k, \Psi_{gh}(k,\ell,m))\ff(k, \Psi_{gh'}(k,\ell,m))}\dfrac{\cE(f,g,h)^2}{\cE_0(g)^2\cE_1(g)^2}\\
	\times	\AJ_p(\tilde \Delta^{\psi_1^{-1}})^2( \omega_f\wedge \eta_A^{\ell + m -2})\cdot \AJ_p(\tilde \Delta^{\psi_2^{-1}})^2( \omega_f\wedge \eta_A^{\ell-1} \omega_A^{m-1}).
	\end{split}
	\end{equation*}
	Using the definition of the factors involved, the left hand side of the previous equality is 
	$$	\L_p^g(\ubff,\ubfg,\ubfh)^2(k,\ell,m)(\Omega^{2-2\ell}\langle g,g\rangle\pi^{2\ell-2})^2.$$
	As $\ell\geq2$, the character $\Psi_g(\ell)$ belongs to the region of classical interpolation for Katz's $p$-adic $L$-function, and following the computations of the proof of \cite[Lemma 3.7]{DLR}, we obtain 
	$$\langle g, g \rangle=\dfrac{L(\Psi_g(\ell)^{-1},0)}{\pi^\ell \sqrt{D_K}} \mod\Q^\times.$$
	Then we see that $$\mu=(\Omega^{2-2\ell}\langle g,g\rangle\pi^{2\ell-2})^{-1}=(\Omega^{2-2\ell}\pi^{\ell-2}L(\Psi_g(\ell)^{-1},0))^{-1},$$  is algebraic by \cite[Proposition 2.11 (1) and Theorem 2.12]{BDP2}, and the factors $\ff_1(k,\ell,m)$ and $\dfrac{\cE(f,g,h)}{\cE_0(g)\cE_1(g)}$ belong to $L$ by the definition of these factors. 
\end{proof}

\subsection{The proof of a special case}\label{subsection: The proof of a special case} 
The rest of this section will be devoted to analyzing the connection between Proposition \ref{prop: main formula in the general case} and Conjecture \ref{conj: more general weights} in a particular case where $\psi_g$ and $\psi_h$ are powers of the Hecke character of an elliptic curve with CM by $\cO_K$. More precisely, in this subsection we continue to denote by $f$ a modular form of weight $k\geq 2$, level $N_f$ and Nebentype $\chi_f$, and we make the following additional assumptions regarding $K$, $\psi_g$, and $\psi_h$:

\begin{enumerate}
	\item $K$ is an imaginary quadratic field of class number $1$.
	
	\item We fix an elliptic curve $A_0/\Q$ with CM by $K$. We denote by $A := A_0\otimes K$ its extension of scalars to $K$ and by $\psi_A$ the Hecke character of $A$. Then we assume that $\psi_g= \psi_A^{\ell-1}$ and $\psi_h=\psi_A^{m-1}$, with $\ell>m\geq 2$ and  $k\geq \ell+m$. 
\end{enumerate}
As usual, we denote  $g:=\theta(\psi_g)\in S_\ell(N_g,\chi_g)$ and $h:=\theta(\psi_h)\in S_m(N_h,\chi_h)$.
We simplify further the setting taking the following assumption on the discriminant of $K$.	
\begin{assumption}\label{ass: condition on DK}
	The discriminant $-D_K$ of $K$ satisfies one of the following conditions:
	\begin{enumerate}[$i)$]
		\item $D_K$ is odd;
		\item $8\mid D_K$;
		\item there exists a prime $\ell\mid D_K$ such that $\ell\equiv 3 \ \mod (4)$.
	\end{enumerate} 
\end{assumption}

Under this assumption, the elliptic curve $A_0/\Q$ can be constructed as in \cite[\S11]{GrossCM}, so that the conductor of $\psi_A$ is generated by $\sqrt{-D_K}$, a condition that we will assume from now on. From the conditions imposed in this section, and using the fact that $\theta(\psi_A)$ is the cuspform attached to the elliptic curve $A$ that descends to $\Q$, it follows that
\begin{equation}\label{eq: levels special case}
N_g=N_h=D_K^2
\end{equation}
and
\begin{equation}\label{eq: nebentype chars special case}
\chi_g=\chi_K^\ell; \ \ \chi_h=\chi_K^m; \ \ \chi_f=\chi_K^{\ell+m}=\begin{cases}
1 & \text{ if $\ell+m$ even}\\ \chi_K & \text{ if $\ell+m$ is odd.} 
\end{cases}
\end{equation}
In this setting, the involved Hecke characters are

\begin{align}
\psi_1 = \psi_g\psi_h\norm_K^{2-\ell-m}=\psi_A^{\ell+m-2}\norm_K^{2-\ell-m},
\end{align}
\begin{align}
\psi_2 = \psi_g\psi_h'\norm_K^{2-\ell-m}=\psi_A^{\ell-1}\overline{\psi}_A^{m-1}\norm_K^{2-\ell-m}=\psi_A^{\ell-m}\norm_K^{1-\ell},
\end{align} 
where we have used that $\psi_A'=\overline{\psi_A}$ and $\psi_A\cdot \overline{\psi}_A=\norm_K$.

Let us assume, as in  \S \ref{sec: statement of the conjecture in more general weights}, that we are in a rank $2$ setting. That is to say, $$\dim_L\CH^c(M(f\otimes g \otimes h))_{0,L}=2,$$ say with basis  $ \Delta_1,\Delta_2 $. The main result of this section is Theorem \ref{thm: main general weights} below. It states that assuming Tate's Conjecture for motives (cf. Conjecture \ref{conj: tate}) and a natural property of the $p$-adic Abel--Jacobi map (cf. Assumption \ref{ass: naturality of abel-jacobi}), if $\ord_{s=c}L(f\otimes g\otimes h,s)=2$ and $\L^g_p(\ubff,\ubfg,\ubfh)(k,\ell,m)\neq 0$ then $\operatorname{Reg}(f,g,h)$ is a non-zero algebraic multiple of $\L^g_p(\ubff,\ubfg,\ubfh)(k,\ell,m)$. It can thus be viewed as the (conditional) proof of a particular case of Conjecture \ref{conj: more general weights}.

The strategy of the proof is roughly as follows. Under Tate's Conjecture, the motive $M(f\otimes g\otimes h)$ decomposes as a sum of motives whose underlying varieties are $W_{k-2}\times A^{\ell+m-2}$ and $W_{k-2}\times A^{\ell-m}$. Using this decomposition and Assumption  \ref{ass: naturality of abel-jacobi} we are able to write the regulator $\operatorname{Reg}(f,g,h)$ in terms of cycles in these varieties (Proposition \ref{prop: regulator diagonal}). Then in Proposition \ref{prop: abel jacobi of different cycles} we relate the $p$-adic Abel--Jacobi image of these cycles to that of the generalised Heegner cycles \begin{align}\label{eq: ghcs}
\tilde \Delta^{\psi_i^{-1}}:=\tilde \Delta_{k-2,\ell+m-2,c_i}^{\psi_i^{-1}}\in \CH^{c}(W_{k-2}\times A^{\ell+m-2}/H_{c_i,f})_0,
\end{align}
and then Proposition \ref{prop: main formula in the general case} provides the relation with the special value of the $p$-adic $L$-function.

Before giving the details of this decomposition, as well as the statement and proof of Theorem \ref{thm: main general weights}, we record some basic results on motives attached to Hecke characters and on restriction of scalars of motives.

\subsubsection{Motives attached to certain Hecke characters}\label{sec: motives attached to certain hecke characters} In this subsection we follow \cite[\S 2.2]{BDP4}.  Fix an identification $K\simeq \End(A)$, and for each $\alpha\in K$ let $\alpha^*$ denote the pull back on differentials of the endomorphism of $A$ corresponding to $\alpha$. Recall that $\psi_A$ stands for the Hecke character of $K$ of infinity type $(0,1)$ associated to $A$. 
The motive attached to $\psi_A$ belongs to $\cM(K)_\Q$ and is of the form $$M(\psi_A)=(A,e_{\psi_A},0),$$ for an appropriate projector $e_{\psi_A}$.   The de Rham realization of this motive is the $K$-vector space $$e_{\psi_A}H^1_{\dR}(A).$$ It is endowed with an action $[\cdot ]$ of $K$ given as follows: if $\omega$ is a differential form on $A$ and $\alpha \in K$, then $[\alpha]\omega=\alpha^*\omega$. Fix a holomorphic differential $\omega_A$ on $A/K$ such that $[\alpha]\omega_A = \alpha\omega_A$ for all $\alpha\in K$, and let $\eta_A$ be the unique element of $H^1_\dR(A)$ such that $[\alpha]\eta_A = \overline{\alpha}\eta_A$ for all $\alpha\in K$ and $\langle \omega_A,\eta_A\rangle=1$ (where $\langle \cdot ,\cdot \rangle$ stands for the Poincar\'{e} pairing). The Hodge filtration of $M(\psi_A)_\dR$ is:
\begin{align*}
\Fil^0(M(\psi_A)_\dR) &= K\cdot \omega_A+K\cdot \eta_A ,\\
\Fil^1(M(\psi_A)_\dR) & = K\cdot \omega_A,\\
\Fil^i(M(\psi_A)_\dR) & = 0 \text{ for } i \geq 2.
\end{align*}

Now, for $r\in \Z_{>0}$ consider the motive $M(\psi_A^r)$ associated to $\psi_A^r$. It is of the form 
\begin{align*}
M(\psi_A^r)=(A^r,e_{\psi_A^r},0)
\end{align*}
for a certain projector $e_{\psi_A^r}$ (cf. \cite[\S2.2]{BDP4}). The Hodge filtration is given by:
\begin{align*}
\Fil^0(M(\psi_A^r)_\dR) &= K\cdot \omega_A^r+K\cdot \eta_A^r ,\\
\Fil^i(M(\psi_A^r)_\dR) & = K\cdot \omega_A^r,\text{ for } i=1,\dots,r,\\
\Fil^i(M(\psi_A^r)_\dR) & = 0 \text{ for } i > r.
\end{align*}

\subsubsection{Restriction of scalars of motives}  There is a restriction of scalars functor
\begin{align*}
\Res_{K/\Q}\colon \cM(K)\lra \cM(\Q),
\end{align*}
which extends the restriction of scalars on algebraic varieties to the category of motives (see \cite{karpenko}). 

Suppose that $M\in \cM(K)$, and put $R=\Res_{K/\Q}(M)$. Also, for $Y\in \cM(\Q)$ denote by $Y_K=Y\otimes_\Q K$ the extension of scalars of $Y$ from $\Q$ to $K$. Then there is a canonical morphism $$w:M\ra R_K$$ satisfying the following universal property: if $Y\in \cM(\Q)$ and $f$ is a morphism $f:Y_K\ra M$, then there exists a unique morphism $s:Y\ra R$ such that $w\circ f=s$. In other words, there is a canonical identification
\begin{align*}
\Hom(Y_K,M)\simeq \Hom(Y,\Res_{K/\Q} (M)).
\end{align*}
In particular, 
\begin{align}\label{eq: iso of chow groups res}
\CH^c(M) = \Hom(\mathbb{L}_K^c,M)\simeq \ \Hom(\mathbb{L}^c,\Res_{K/\Q} M)=\CH^c(\Res_{K/\Q} M).
\end{align}
Here we have used that the Lefschetz motive over $K$ is the base extension $\mathbb{L}_K$. We will need the following generalization of \eqref{eq: iso of chow groups res}.
\begin{lemma}\label{lemma: chow groups and res}
	Suppose that $M$ is a motive over $K$ and $N$ is a motive over $\Q$. There is a canonical isomorphism of Chow groups
	\begin{align*}
	\CH^c(N\otimes \Res_{K/\Q}(M))\simeq \CH^c(\norm_K\otimes M).
	\end{align*}
\end{lemma}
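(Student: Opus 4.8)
The plan is to deduce the claimed isomorphism from the two adjunction properties already recorded for $\Res_{K/\Q}$ — the canonical identification $\Hom(Y_K,M)\simeq\Hom(Y,\Res_{K/\Q}M)$ and the fact that $(\mathbb L^c)_K=\mathbb L_K^c$ — together with the projection formula for restriction of scalars. The key observation is that, by definition of the Chow group of a motive in \eqref{eq: chow group of motives}, we have
\begin{align*}
  \CH^c(N\otimes\Res_{K/\Q}(M))=\Hom\bigl(\mathbb L^c,\,N\otimes\Res_{K/\Q}(M)\bigr),
\end{align*}
and one expects a projection-formula isomorphism
\begin{align*}
  N\otimes\Res_{K/\Q}(M)\simeq \Res_{K/\Q}\bigl(N_K\otimes M\bigr)
\end{align*}
in $\cM(\Q)$. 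Granting this, applying the adjunction with $Y=\mathbb L^c$ gives
\begin{align*}
  \Hom\bigl(\mathbb L^c,\Res_{K/\Q}(N_K\otimes M)\bigr)\simeq \Hom\bigl(\mathbb L_K^c,\,N_K\otimes M\bigr)=\CH^c(N_K\otimes M),
\end{align*}
and since $N_K\otimes M = \norm_K\otimes M$ under the identification of the motive $\norm_K$ with the appropriate twist of $N$ extended to $K$ (here $N$ is the motive whose de Rham realization computes values of $\norm_K$, as used throughout \S\ref{sec: general unbalanced weights}), this yields the stated isomorphism. All the maps involved are induced by correspondences, so functoriality gives the canonicity claim for free.

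\textbf{Order of the steps.} First I would record the projection formula $N\otimes\Res_{K/\Q}(M)\simeq\Res_{K/\Q}(N_K\otimes M)$; this is a standard property of restriction of scalars (it holds on the level of varieties, hence on the level of correspondences and motives by the construction in \cite{karpenko}), and it follows formally once one notes that $\Res_{K/\Q}$ is right adjoint to base extension $(-)_K$ and that $(-)_K$ is monoidal. Second, I would combine this with the universal property of $\Res_{K/\Q}$ already stated in the excerpt, specialized to $Y=\mathbb L^c$, to rewrite $\Hom(\mathbb L^c,\Res_{K/\Q}(N_K\otimes M))$ as $\Hom(\mathbb L_K^c, N_K\otimes M)$. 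Third, I would unwind the definitions of the Chow groups via \eqref{eq: chow group of motives} on both ends and identify $N_K\otimes M$ with $\norm_K\otimes M$, obtaining the displayed isomorphism; tracking that every arrow is realized by an algebraic correspondence gives that it is canonical.

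\textbf{Main obstacle.} The only genuine point requiring care is the projection formula for $\Res_{K/\Q}$ in the category of Chow motives: one must check that the natural map $N\otimes\Res_{K/\Q}(M)\to\Res_{K/\Q}(N_K\otimes M)$, defined via the unit/counit of the $(-)_K\dashv\Res_{K/\Q}$ adjunction, is an isomorphism of motives and not merely a map, and that it is compatible with the Lefschetz twists and with the cycle-class maps (so that it restricts to null-homologous subgroups). Since $K/\Q$ is a finite separable — indeed Galois — extension and $\Res_{K/\Q}$ of a variety is, after base change to $K$, a product of Galois conjugates, this is a formal consequence of the analogous statement for smooth projective varieties, but it is the step I would write out most carefully. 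Everything else is bookkeeping with the adjunction already granted in the excerpt.
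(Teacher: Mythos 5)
Your proof is correct and takes essentially the same route as the paper: the paper's proof is your argument with the projection formula unpacked at the level of Hom groups, namely it writes $\Hom(\mathbb{L}^c, N\otimes\Res_{K/\Q}(M)) \simeq \Hom(\mathbb{L}^c\otimes N^\vee, \Res_{K/\Q}(M))$ by rigidity (the tensor--dual formula of \cite[\S1.5]{scholl}), applies the adjunction for $\Res_{K/\Q}$ to get $\Hom(\mathbb{L}_K^c\otimes N_K^\vee, M)$, and uses rigidity again to arrive at $\Hom(\mathbb{L}_K^c, N_K\otimes M)=\CH^c(N_K\otimes M)$. The one step you single out as delicate --- that the projection-formula map is actually an isomorphism --- is thereby absorbed into the duality adjunction, so your plan closes without further work; note also that the restriction to null-homologous classes is not part of this lemma but is handled separately in its application.
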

\begin{proof}
	By definition of Chow group, and using the standard formula relating tensor products and duals (see \cite[\S1.5]{scholl}) we have:
	\begin{align*}
	\CH^c(N\otimes \Res_{K/\Q}(M))=&\Hom(\mathbb{L}^c, N\otimes \Res_{K/\Q}(M))= \Hom(\mathbb{L}^c\otimes N^\vee, \Res_{K/\Q}(M))\\ =&\Hom(\mathbb{L}_K^c\otimes \norm_K^\vee, M)=\Hom(\mathbb{L}_K^c, \norm_K\otimes M) = \CH^c(\norm_K\otimes M).
	\end{align*}
\end{proof}
There is a natural isomorphism of $\Q$-vector spaces, preserving the Hodge filtration (cf. \cite[p. 16]{jannsen}) 
\begin{align}\label{eq: de rham realization of restriction}
H_{\mathrm{dR}}(M) \simeq H_{\mathrm{dR}}(\Res_{K/\Q}(M)).
\end{align}

We will make extensive use of the well-known fact that the restriction of scalars of a motive and the motive itself have the same $L$-function, that is
\begin{align*}
L(M,s)= L(\Res_{K/\Q}(M),s).
\end{align*}

\subsubsection{A decomposition of $M(f\otimes g\otimes h)$ and the main result}\label{sec: statement of the particular case higher weights}
Recall that the motive over $\Q$ associated to $f$ is $M_f=(W_{k-2},e_f,0)$, and let  $M_{f/K}$ be  its base change to $K$.  As explained in \S\ref{sec: motives attached to certain hecke characters},  we have the following motives in $\cM(K)_\Q$	
\begin{align*}
M(\psi_A^{\ell+m-2})=(A^{\ell+m-2}, e_1, 0), \   M(\psi_A^{\ell-m})=(A^{\ell-m}, e_2,0)
\end{align*}
for suitable projectors that we now denote $e_1$ and $e_2$.  

Define the Hecke characters
\begin{align*}
\tilde{\psi}_1:=\psi_g\psi_h=\psi_A^{\ell+m-2}, \ \ \ \ \tilde{\psi}_2:=\psi_g\psi'_h=\psi_A^{\ell-m}\norm_K^{m-1}.
\end{align*}

For $i=1,2$, denote by $M_i$ the motive associated to $\tilde\psi_i$.  Observe that (see cf. \cite[p. 98]{scha}):
\begin{align*}
M_1=M(\psi_A^{\ell+m-2}),\ \ M_2 = M(\psi_A^{\ell-m})(1-m) \ \ \text{ (the Tate twist)}.
\end{align*}

We need to assume the following classical conjecture.
\begin{conjecture}[Tate's conjecture]\label{conj: tate}
	Let $F$ be a number field and denote by $Rep_{\Q_\ell}(G_F)$ the category of $\ell$-adic representations of $\Gal(\bar F/F)$. The functor 
	$$(\cdot)_\ell:\cM(F)_\Q\longrightarrow  Rep(G_F)$$ that sends a motive over $F$ to its \'{e}tale $\ell$-adic realization is fully faithful.
\end{conjecture}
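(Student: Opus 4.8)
On proving Conjecture \ref{conj: tate}, I should be candid: the statement is a form of the Tate conjecture, one of the central open problems in arithmetic geometry, and it cannot be established by the methods of this paper. The authors assume it rather than prove it, and what follows is an honest description of the standard reduction to statements about algebraic cycles, together with the known partial results — which is the most one can offer.

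The plan would be to decompose full faithfulness into its two halves. For motives $M=(V,p,m)$ and $N=(W,q,n)$ in $\cM(F)_\Q$, the Hom space is computed by algebraic correspondences on $V\times W$, so the content of \emph{fullness} is that every $\Gal(\bar F/F)$-equivariant homomorphism $(M)_\ell\to(N)_\ell$ is induced by a cycle class. Via the identification of $\Hom_{G_F}((M)_\ell,(N)_\ell)$ with the Galois invariants of the $\ell$-adic cohomology of $V\times W$ in the appropriate degree and Tate twist, this is precisely the assertion that the cycle class map surjects onto the space of Tate classes, i.e.\ the Tate conjecture on algebraic cycles for the products that arise. \emph{Faithfulness} is the complementary injectivity statement; on Chow motives it can fail because of homologically trivial cycles, so one works modulo homological (equivalently, $\ell$-adic) equivalence, where injectivity is automatic, and the categorical statement then rests additionally on semisimplicity of the $\ell$-adic representations.

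I would then invoke the known cases rather than attempt the general statement. Tate's own theorem gives the conjecture for abelian varieties over finite fields, and Faltings' isogeny theorem gives it for $H^1$ of abelian varieties over number fields; there are further results for divisors on K3 surfaces, for products of curves, and for certain Shimura varieties. In the situation actually needed here the geometry is especially favourable: the relevant motives are built from Kuga--Sato varieties $W_r$ and from powers of a CM elliptic curve $A$, and the CM structure forces the associated Galois representations to be potentially abelian and to decompose into the one-dimensional pieces attached to Hecke characters of $K$. For such motives the Tate classes can often be exhibited explicitly, and the decomposition of $M(f\otimes g\otimes h)$ into summands supported on $W_{k-2}\times A^{\ell+m-2}$ and $W_{k-2}\times A^{\ell-m}$ used downstream in \S\ref{sec: statement of the particular case higher weights} is exactly a manifestation of this accessible, abelian structure.

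The main obstacle is the one that makes the Tate conjecture hard in general: there is no known mechanism for producing an algebraic cycle out of a prescribed Galois-invariant cohomology class. Grothendieck's standard conjectures, which would control homological versus numerical equivalence and feed into the semisimplicity input, are themselves open, and the algebraicity of Tate classes is inaccessible by any general construction. Thus, while the reduction above is routine, the conjecture cannot be proved outright; the honest course — and the one the paper takes — is to assume it, noting only that in the CM setting the specific instances one really needs lie considerably closer to reach than the general statement.
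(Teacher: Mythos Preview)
Your assessment is correct: the statement is labeled as a \emph{conjecture} in the paper and is explicitly assumed (``We need to assume the following classical conjecture''), not proved. There is no proof in the paper to compare against, and your explanation of why the general statement is open, together with the reduction to surjectivity of the cycle class map and the remark that the CM setting here is more accessible, is an appropriate and accurate response.
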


\begin{proposition}\label{prop: decomposition of chow and derham}
	Assuming Conjecture \ref{conj: tate}, there are natural isomorphisms 
\begin{align}
	\beta_{\CH}\colon   \CH^c(M(f\otimes g\otimes h))_0 \simeq \CH^c \left( M_{f/K}\otimes  M(\psi_A^{\ell+m-2})\right)_0\oplus \CH^{c-m+1} \left( M_{f/K}\otimes  M(\psi_A^{\ell-m})\right)_0
\end{align}
	and
	\begin{align*}
	\beta_\dR\colon   (M(f)_\dR \otimes (M(g\otimes h))_\dR \simeq  ( M_f)_\dR \otimes \left[(M(\psi_A^{\ell+m-2})_\dR \oplus M(\psi_A^{\ell-m})(1-m) )_\dR\right].
	\end{align*}
\end{proposition}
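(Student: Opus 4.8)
The plan is to produce the two isomorphisms from a single decomposition of motives in $\cM(\Q)_L$, and then apply the functorial properties of Chow groups and de Rham realizations recalled in \S\ref{sec: statement of the particular case higher general weights}. First I would record the Clifford-type decomposition of the tensor product of the theta motives: since $g=\theta(\psi_g)$ and $h=\theta(\psi_h)$, the motive $M_g\otimes M_h$ over $\Q$ is the restriction of scalars $\Res_{K/\Q}$ of the sum of the motives attached to $\tilde\psi_1=\psi_g\psi_h$ and $\tilde\psi_2=\psi_g\psi_h'$; this is the motivic shadow of the decomposition of Artin/Galois representations $V_g\otimes V_h = V_{\tilde\psi_1}\oplus V_{\tilde\psi_2}$ already used in \eqref{eq:dec of artin reps} and \eqref{eq the triple product representation splits}, and it holds \emph{as motives} precisely because Conjecture \ref{conj: tate} lets us upgrade the known splitting of $\ell$-adic realizations to a splitting in $\cM(\Q)_L$ (the functor to $\mathrm{Rep}_{\Q_\ell}(G_\Q)$ is fully faithful, so the idempotents cutting out the two Galois summands are motivic). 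Tensoring with $M_f$ and using $M_i$ for the motive of $\tilde\psi_i$ (so $M_1=M(\psi_A^{\ell+m-2})$ and $M_2=M(\psi_A^{\ell-m})(1-m)$ as recorded just before the statement) gives
\begin{align*}
M(f\otimes g\otimes h)\simeq M_f\otimes\Res_{K/\Q}\!\left(M(\psi_A^{\ell+m-2})\right)\ \oplus\ M_f\otimes\Res_{K/\Q}\!\left(M(\psi_A^{\ell-m})(1-m)\right).
\end{align*}

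Next I would pass to Chow groups. Applying $\CH^c(-)_0$ to the displayed decomposition and using Lemma \ref{lemma: chow groups and res} to each summand turns $\CH^c\big(M_f\otimes\Res_{K/\Q}(M)\big)$ into $\CH^c\big(\norm_K\otimes M\big)=\CH^c(M_{f/K}\otimes M)$ over $K$ (here the $\norm_K$ twist is absorbed into the base-changed motive $M_{f/K}$ exactly as in \eqref{eq: iso of chow groups res}). For the first summand this yields $\CH^c(M_{f/K}\otimes M(\psi_A^{\ell+m-2}))_0$ directly; for the second, the Tate twist by $(1-m)$ shifts the codimension, $\CH^c(N(1-m))=\CH^{c-(m-1)}(N)=\CH^{c-m+1}(N)$, producing $\CH^{c-m+1}(M_{f/K}\otimes M(\psi_A^{\ell-m}))_0$. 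Compatibility with the null-homologous subgroups is automatic since all the maps involved (restriction of scalars, the idempotent projectors, tensoring with $M_f$, Tate twists) commute with the cycle class maps. This gives $\beta_{\CH}$.

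For $\beta_\dR$ I would apply the de Rham realization functor to the same motivic decomposition, using that $H_\dR$ is additive, monoidal, and satisfies $H_\dR(\Res_{K/\Q}(M))\simeq H_\dR(M)$ compatibly with the Hodge filtration by \eqref{eq: de rham realization of restriction}; the Tate twist on $M_2=M(\psi_A^{\ell-m})(1-m)$ is carried along, which is exactly why the right-hand side is written with $M(\psi_A^{\ell-m})(1-m)$ rather than $M(\psi_A^{\ell-m})$. The resulting isomorphism of filtered $L$-vector spaces is $\beta_\dR$, and its naturality is inherited from the naturality of $w:M\to R_K$ and of the realization functors.

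The main obstacle is the first step: one genuinely needs Conjecture \ref{conj: tate} to know the decomposition $V_g\otimes V_h=V_{\tilde\psi_1}\oplus V_{\tilde\psi_2}$ of $\ell$-adic Galois representations is induced by an actual decomposition of Chow motives over $\Q$ (equivalently, that the Galois-equivariant projectors are realizations of motivic correspondences). Everything downstream — Lemma \ref{lemma: chow groups and res}, the Tate-twist bookkeeping, additivity of $H_\dR$ — is formal. One should also take a moment to check that the identification $M_g\otimes M_h\simeq\Res_{K/\Q}(M_1\oplus M_2)$ is the correct ``induction/restriction of scalars'' statement: the theta motive $M_{\theta(\psi)}$ over $\Q$ is $\Res_{K/\Q}M(\psi)$, so the Künneth-type identity $\Res_{K/\Q}M(\psi_g)\otimes\Res_{K/\Q}M(\psi_h)\simeq\Res_{K/\Q}\big(M(\psi_g\psi_h)\oplus M(\psi_g\psi_h')\big)$ is the motivic form of Mackey's formula, which again follows from Conjecture \ref{conj: tate} together with the corresponding identity of $\ell$-adic representations.
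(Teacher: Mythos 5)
Your proposal is correct and follows essentially the same route as the paper: invoke Conjecture \ref{conj: tate} to upgrade the decomposition of Galois representations (equivalently, the factorization of $L$-functions via Artin formalism) to an isomorphism of motives $M(f\otimes g\otimes h)\simeq M_f\otimes(\Res_{K/\Q}M_1\oplus\Res_{K/\Q}M_2)$, then apply Lemma \ref{lemma: chow groups and res} together with the compatibility of the cycle class map with restriction of scalars, absorb the Tate twist $(1-m)$ into the codimension shift $\CH^c\leadsto\CH^{c-m+1}$, and use \eqref{eq: de rham realization of restriction} for $\beta_\dR$. The only cosmetic difference is that you phrase the motivic splitting via lifting the Galois-equivariant idempotents through the fully faithful realization functor, whereas the paper deduces it from the matching of $L$-functions; these carry the same content here.
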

\begin{proof}
	By \eqref{eq the triple product representation splits} and Artin formalism  we have that 
	\begin{align*}
	L(f\otimes g\otimes h,s) =&L(V_f\otimes (V_{\tilde\psi_1}\oplus V_{\tilde\psi_2}),s) =  L(V_f\otimes V_{\tilde\psi_1},s)\cdot L(V_f\otimes V_{\tilde\psi_2},s)\\
	= &L(f/K\otimes \tilde\psi_1,s)\cdot L(f/K\otimes \tilde\psi_2,s)=L(M_f/K\otimes M(\tilde\psi_1))\cdot L(M_f/K\otimes M(\tilde\psi_2))\\ = & L(M_f\otimes \Res_{K/\Q}(M_1))\cdot L(M_f\otimes \Res_{K/\Q}(M_2)).
	\end{align*}
	Tate's conjecture implies then the existence of an isomorphism of motives
	\begin{align*}
	M(f\otimes g\otimes h,s) \simeq M_f\otimes \left( \Res_{K/\Q}(M_1) \oplus \Res_{K/\Q}(M_2)\right),
	\end{align*}
	which induces isomorphisms at the level of Chow groups and de Rham realizations:
	\begin{align*}
	\CH^c(M(f\otimes g\otimes h)) \simeq \CH^c \left( M_f\otimes \left( \Res_{K/\Q}(M_1) \oplus \Res_{K/\Q}(M_2)\right) \right)
	\end{align*}
	\begin{align*}
	\left(M(f\otimes g\otimes h)\right)_\dR \simeq  \left( M_f\otimes \left( \Res_{K/\Q}(M_1) \oplus \Res_{K/\Q}(M_2)\right) \right)_\dR.
	\end{align*}

	By Lemma \ref{lemma: chow groups and res} and using the fact that the cycle class map commutes with the restriction of scalars (see \cite[p.75]{jannsen}), we see that there is a natural isomorphism 
	\begin{align}\label{eq: beta chow}
	\CH^c(M(f\otimes g\otimes h))_0 \simeq \CH^c \left( M_{f/K}\otimes  M_1\right)_0\oplus \CH^c \left( M_{f/K}\otimes  M_2\right)_0.
	\end{align}
	Observe also that there is a canonical isomorphism 
	\begin{align*}
	\CH^c \left( M_{f/K}\otimes  M_2\right)\simeq \CH^{c-m+1} \left( M_{f/K}\otimes  M(\psi_A^{\ell-m})\right).
	\end{align*}
	Indeed, this follows from the very definition of the Chow group of a motive and the fact that $M_2(1-m)=M_2\otimes \mathbb{L}^{m-1}$. Therefore we obtain the canonical isomorphism $\beta_{\CH}$. 
	Also, the isomorphism \eqref{eq: de rham realization of restriction} gives the natural isomorphism $\beta_\dR$.
\end{proof}

Recall that we are in a situation of algebraic rank $2$. We will further assume that we are in a \textit{rank $(1,1)$-setting}, meaning that the rank of both $\CH^c(M_{f/K}\otimes M(\psi_A^{\ell+m-2}))_0$ and $\CH^{c-m+1}(M_{f/K}\otimes M(\psi_A^{\ell-m}))_0$ is one. 
This hypothesis is not too restrictive for the aim of this section. Indeed, the proof Theorem \ref{thm: main general weights} shows that it is satisfied whenever $\L_p^g(\ubff,\ubfg,\ubfh)(k,\ell,m)\neq0$, and we will prove the main result under this non-vanishing hypothesis.

Since in the definition of $\mathrm{Reg}(f,g,h)$ we are free to choose the basis of $\CH^c(M(f\otimes g\otimes h))_0$ we can, and do, assume that $(\Delta_1,\Delta_2)$ are chosen to be adapted to the decomposition of Chow groups given by the isomorphism $\beta_{\CH}$. That is to say, we can suppose that 
\begin{align}\label{eq: beta chow}
\beta_{\CH}(\Delta_1)=(\Delta_1^1, 0), \ \ \beta_{\CH}(\Delta_2)=(0, \Delta_2^2)
\end{align}
for some cycles $\Delta_1^1\in \CH^c(M_{f/K}\otimes M(\psi_A^{\ell+m-2}))_0$ and $\Delta_2^2\in \CH^{c-m+1}(M_{f/K}\otimes M(\psi_A^{\ell-m}))_0$.

In view of the naturalness of the isomorphisms of Proposition \ref{prop: decomposition of chow and derham}, it is also natural to assume that they behave well with respect to the $p$-adic Abel-Jacobi map.

\begin{assumption}\label{ass: naturality of abel-jacobi}
	For any cycle $\Delta$ and de Rham class $\omega$, we have that 
	\begin{align*}
	\AJ_p(\Delta)(\omega)=\AJ_p(\beta_{\CH}(\Delta))(\beta_\dR(\omega)).
	\end{align*}
\end{assumption}

\begin{proposition}\label{prop: regulator diagonal}
	Under the assumptions of this subsection, the matrix defining  $\operatorname{Reg}(f,g,h)$ can be chosen to be diagonal. More precisely,
	\begin{align*}
	\operatorname{Reg}(f,g,h)= \AJ_p(\Delta_1^1)(\eta_A^{\ell+m-2})\cdot \AJ_p(\Delta_2^2)(\eta_A^{\ell-m})\pmod{K^\times}.
	\end{align*} 
\end{proposition}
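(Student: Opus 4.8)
The plan is to transport the entire regulator matrix along the isomorphisms $\beta_{\CH}$ and $\beta_\dR$ of Proposition~\ref{prop: decomposition of chow and derham} via Assumption~\ref{ass: naturality of abel-jacobi}, and then to observe that for the cycles and de Rham classes at hand these isomorphisms ``diagonalise'' the pairing. First I would rewrite, using Assumption~\ref{ass: naturality of abel-jacobi}, each of the four entries of the matrix defining $\operatorname{Reg}(f,g,h)$ (cf. \eqref{eq regulator higher weights}) as $\AJ_p(\beta_{\CH}(\Delta_i))(\beta_\dR(\omega_f\wedge\eta_g\wedge\omega_h))$ and $\AJ_p(\beta_{\CH}(\Delta_i))(\beta_\dR(\omega_f\wedge\eta_g\wedge\eta_h))$, for $i=1,2$. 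By the adapted choice of basis recorded in \eqref{eq: beta chow} we have $\beta_{\CH}(\Delta_1)=(\Delta_1^1,0)$ and $\beta_{\CH}(\Delta_2)=(0,\Delta_2^2)$, so that the $\Delta_1$-row pairs only with the component of $\beta_\dR(\cdot)$ lying in $(M_{f/K})_\dR\otimes M(\psi_A^{\ell+m-2})_\dR$, while the $\Delta_2$-row pairs only with the component lying in $(M_{f/K})_\dR\otimes(M(\psi_A^{\ell-m})(1-m))_\dR$.

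The crux is then an explicit computation of $\beta_\dR$ on the two classes $\omega_f\wedge\eta_g\wedge\omega_h$ and $\omega_f\wedge\eta_g\wedge\eta_h$. Since $g=\theta(\psi_A^{\ell-1})$ and $h=\theta(\psi_A^{m-1})$, the description of the motives attached to theta series in \S\ref{sec: motives attached to certain hecke characters} lets me identify $\omega_g,\eta_g$ with $\omega_A^{\ell-1},\eta_A^{\ell-1}$ and $\omega_h,\eta_h$ with $\omega_A^{m-1},\eta_A^{m-1}$, up to $K^\times$. Tracking the eigenvalues of the $\End(A)$-action $[\,\cdot\,]$ together with the Hodge filtration then shows, under the standing assumptions $\ell>m\geq2$: the class $\eta_g\wedge\eta_h$ is $\overline{\psi_A}^{\,\ell+m-2}$-isotypic of Hodge type $(0,\ell+m-2)$, hence lands in the first summand where it is proportional to $\eta_A^{\ell+m-2}$; whereas $\eta_g\wedge\omega_h$ has $[\,\cdot\,]$-type and Hodge type $(m-1,\ell-1)$, which is exactly the antiholomorphic piece of $M(\psi_A^{\ell-m})(1-m)$ --- this being precisely the point of the Tate twist --- hence lands in the second summand where, after untwisting, it is proportional to $\eta_A^{\ell-m}$. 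Consequently $\beta_\dR(\omega_f\wedge\eta_g\wedge\eta_h)$ equals $(\omega_f\wedge\eta_A^{\ell+m-2},\,0)$ and $\beta_\dR(\omega_f\wedge\eta_g\wedge\omega_h)$ equals $(0,\,\omega_f\wedge\eta_A^{\ell-m})$, both modulo $K^\times$.

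Combining the two steps, the regulator matrix in the basis $(\Delta_1,\Delta_2)$ with columns indexed by $(\omega_f\wedge\eta_g\wedge\omega_h,\ \omega_f\wedge\eta_g\wedge\eta_h)$ becomes antidiagonal, with nonzero entries $\AJ_p(\Delta_2^2)(\omega_f\wedge\eta_A^{\ell-m})$ and $\AJ_p(\Delta_1^1)(\omega_f\wedge\eta_A^{\ell+m-2})$; swapping the two de Rham classes turns it into a diagonal matrix. Its determinant is then $-\AJ_p(\Delta_1^1)(\omega_f\wedge\eta_A^{\ell+m-2})\cdot\AJ_p(\Delta_2^2)(\omega_f\wedge\eta_A^{\ell-m})$, which is the asserted formula modulo $K^\times$: indeed $-1\in K^\times$, and the combinatorial symmetrisation factors from $\mathrm{Sym}^\bullet\coh^1_\dR(A)$ together with the Tate-twist identification contribute only scalars in $\Q^\times\subseteq K^\times$ (here the suppressed $\omega_f$ is understood, as in the statement).

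I expect the main obstacle to be the rigorous justification of the de Rham computation in the second paragraph. One needs that the a priori only abstract isomorphism of motives furnished by Tate's conjecture in Proposition~\ref{prop: decomposition of chow and derham} induces, at the level of de Rham realisations, the ``obvious'' map respecting the multiplicative structure on $\mathrm{Sym}^\bullet\coh^1_\dR(A)$ and the Hodge filtration, so that the classes $\omega_f\wedge\eta_g\wedge\omega_h$ and $\omega_f\wedge\eta_g\wedge\eta_h$ really do decompose as claimed. Since the simple constituents occurring have multiplicity one, the de Rham isomorphism is pinned down up to scalars on each isotypic line, which is all that is needed as we only claim an identity modulo $K^\times$; nonetheless, correctly matching the filtration indices $\Fil^c$ and $\Fil^{c-m+1}$ across the Tate twist $M_2=M(\psi_A^{\ell-m})(1-m)$ --- and checking that the antiholomorphic, rather than holomorphic, generator is the one hit --- requires care.
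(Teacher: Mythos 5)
Your proposal is correct and follows essentially the same route as the paper: transport the matrix through $\beta_{\CH}$ and $\beta_\dR$ via Assumption~\ref{ass: naturality of abel-jacobi}, compute $\beta_\dR$ on the two classes $\omega_f\wedge\eta_g\wedge\omega_h$ and $\omega_f\wedge\eta_g\wedge\eta_h$ by matching Hodge-filtration graded pieces (the paper isolates exactly this computation as a separate lemma), and conclude that the matrix is antidiagonal with determinant the stated product up to $-1\in K^\times$. Your added care about the column-swap sign and about the abstract Tate-conjecture isomorphism being pinned down only up to scalars on each isotypic line (which suffices modulo $K^\times$) is consistent with, and slightly more explicit than, what the paper does.
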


In order to prove the Proposition, we need a lemma on the behavior of the de Rham classes via the isomorphism $\beta_\dR$.

\begin{lemma}
	If we regard the target of $\beta_\dR$ as the direct sum
	\begin{align*}
	\left(   (M_f)_\dR \otimes M(\psi_A^{\ell+m-2})_\dR \right) \bigoplus \left( (M_f)_\dR \otimes M(\psi_A^{\ell-m})(1-m)_\dR \right),
	\end{align*}
	then we have that:
	\begin{align*} 
	\beta_\dR(\omega_f\wedge\eta_g\wedge\eta_h) = (\omega_f\wedge\eta_A^{\ell+m-2},0) \pmod{K^\times}\\
	\beta_\dR(\omega_f\wedge\eta_g\wedge\omega_h) =(0, \omega_f\wedge \eta_A^{\ell-m})\pmod{K^\times}.
	\end{align*}
\end{lemma}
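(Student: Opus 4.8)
The plan is to check both identities directly on de Rham realizations, after extending scalars to $K$ so that the complex multiplication diagonalizes everything. First, from its construction in Proposition~\ref{prop: decomposition of chow and derham}, the isomorphism $\beta_\dR$ is induced, via \eqref{eq: de rham realization of restriction}, by an isomorphism of motives of the shape $M_f\otimes\gamma$ with $\gamma\colon M_g\otimes M_h\xrightarrow{\sim}\Res_{K/\Q}(M_1)\oplus\Res_{K/\Q}(M_2)$; hence $\beta_\dR$ is the identity on the $M_f$-tensor factor, and it is enough to describe the induced map $\gamma_\dR\colon (M_g\otimes M_h)_\dR\xrightarrow{\sim}(M_1)_\dR\oplus(M_2)_\dR$ on the classes $\eta_g\wedge\eta_h$ and $\eta_g\wedge\omega_h$, and then tensor with $\omega_f$. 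Since everything in sight is a statement about de Rham realizations, this can be checked over $K$, where the $K$-action by complex multiplication (coming from the fact that $g=\theta(\psi_A^{\ell-1})$ and $h=\theta(\psi_A^{m-1})$ have CM) splits the spaces into eigenlines.

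Second, I would recall the eigenspace decomposition of the de Rham realization of a CM theta series. The comparison between the de Rham cohomology of the ambient Kuga--Sato varieties and symmetric powers of $\coh^1_\dR(A)$ (\cite[Appendix]{BDP1}, \S\ref{sec: motives attached to certain hecke characters}, \cite[\S2.2]{BDP2}) identifies, up to scalars in $K^\times$, the holomorphic class $\omega_g\in\Fil^{\ell-1}$ with $\omega_A^{\ell-1}$ and the unit-root class $\eta_g$ with $\eta_A^{\ell-1}$ inside $(M_g)_\dR\otimes K$, and likewise $\omega_h\leftrightarrow\omega_A^{m-1}$, $\eta_h\leftrightarrow\eta_A^{m-1}$; under complex multiplication these span the eigenlines attached to $\psi_g,\overline{\psi}_g$ and $\psi_h,\overline{\psi}_h$ respectively. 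Tensoring, the four classes $\omega_g\wedge\omega_h$, $\omega_g\wedge\eta_h$, $\eta_g\wedge\omega_h$, $\eta_g\wedge\eta_h$ span the four $K$-eigenlines of $(M_g\otimes M_h)_\dR\otimes K$, of Hecke types $\psi_g\psi_h=\tilde\psi_1$, $\psi_g\psi_h'=\tilde\psi_2$, $\overline{\psi}_g\psi_h=\overline{\tilde\psi_2}$ and $\overline{\psi}_g\overline{\psi}_h=\overline{\tilde\psi_1}$ (using $\psi_A'=\overline{\psi}_A$, $\psi_A\overline{\psi}_A=\norm_K$, $\overline{\norm}_K=\norm_K$, and $\psi_h'=\overline{\psi}_h$). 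On the other hand, since $\Res_{K/\Q}(M_i)\otimes_\Q K$ is the sum of $M_i$ and its $\Gal(K/\Q)$-conjugate, base-changing $\gamma$ to $K$ shows that the summand $(M_1)_\dR=(M(\psi_A^{\ell+m-2}))_\dR$ receives precisely the $\tilde\psi_1$- and $\overline{\tilde\psi_1}$-eigenlines, and $(M_2)_\dR=(M(\psi_A^{\ell-m})(1-m))_\dR$ the $\tilde\psi_2$- and $\overline{\tilde\psi_2}$-eigenlines.

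The two identities then fall out by matching types. The class $\eta_g\wedge\eta_h$ has type $\overline{\psi}_g\overline{\psi}_h=\overline{\tilde\psi_1}$, so $\gamma_\dR$ carries it into the first summand, where it corresponds to $\eta_A^{\ell-1}\wedge\eta_A^{m-1}=\eta_A^{\ell+m-2}$ up to $K^\times$; this is the first formula. The class $\eta_g\wedge\omega_h$ has type $\overline{\psi}_g\psi_h=\overline{\tilde\psi_2}$, so $\gamma_\dR$ carries it into the second summand, where it corresponds to $\eta_A^{\ell-1}\wedge\omega_A^{m-1}$; under the identification $M_2=M(\psi_A^{\ell-m})(1-m)$ of \S\ref{sec: statement of the particular case higher weights}, which absorbs the $m-1$ factors of $\norm_K$ in $\tilde\psi_2=\psi_A^{\ell-m}\norm_K^{m-1}$ (and with them $m-1$ copies of the Poincar\'e-dual class $\omega_A\wedge\eta_A$) into a Tate twist, this equals $\eta_A^{\ell-m}$ up to $K^\times$. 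Tensoring both displays with $\omega_f$, which $\beta_\dR$ leaves untouched, yields the lemma.

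I expect the main obstacle to be the bookkeeping in the last two steps rather than any deep new input. One must check that $\omega_g$ (resp.\ $\eta_g$) lies in the $\psi_g$-eigenline (resp.\ $\overline{\psi}_g$-eigenline) and not the other way round, which is a Hodge-filtration / Hodge--Tate-weight comparison against the conventions fixed in \S\ref{sec: motives attached to certain hecke characters}; and that the identification of $\eta_A^{\ell-1}\wedge\omega_A^{m-1}$ with $\eta_A^{\ell-m}$ inside $(M(\psi_A^{\ell-m})(1-m))_\dR$ is the one compatible with the normalization $\langle\omega_A,\eta_A\rangle=1$ of \S\ref{sec: motives attached to certain hecke characters} and the description of $M_2$ in \cite[p.~98]{scha}. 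Both are routine given \cite{BDP1,BDP2,BDP4}, but need to be carried out with enough care to justify the equalities modulo $K^\times$.
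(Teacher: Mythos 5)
Your argument is correct in outline, but it is not the route the paper takes. The paper's proof uses only the Hodge filtration: since $\beta_\dR$ is induced by an isomorphism of motives, it is a filtered isomorphism, and one simply computes the two relevant one-dimensional graded pieces $\Fil^{0}/\Fil^{m-1}$ and $\Fil^{m-1}/\Fil^{\ell-1}$ on both sides. On the source these are spanned by the images of $\eta_g\wedge\eta_h$ and $\eta_g\wedge\omega_h$ respectively; on the target the first graded piece is $\langle\eta_A^{\ell+m-2}\rangle\oplus 0$ and the second is $0\oplus\langle\eta_A^{\ell-m}\rangle$ --- the Tate twist by $1-m$, which shifts the filtration of $M(\psi_A^{\ell-m})_\dR$ by $m-1$ steps, is exactly what places $\eta_A^{\ell-m}$ in the second graded piece and kills the contribution of $M(\psi_A^{\ell+m-2})_\dR$ there. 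This needs no input beyond the fact that a morphism of motives induces a filtered map on de Rham realizations. Your argument instead base-changes to $K$ and matches the four CM eigenlines of type $\tilde\psi_1,\overline{\tilde\psi_1},\tilde\psi_2,\overline{\tilde\psi_2}$. That works, and in fact yields slightly more: the filtration argument a priori only determines the images modulo the next filtration step, whereas the eigenline argument pins them down on the nose. But it requires an extra justification you only gesture at: the isomorphism supplied by Tate's conjecture is abstract, so one must argue that it respects the CM-isotypic decomposition. This does follow --- the four characters are pairwise distinct because $\ell>m\geq 2$, so the decomposition of the \'etale realization into characters is canonical, hence preserved by any isomorphism of motives compatibly with the de Rham realization --- but it should be said explicitly, together with the identification $\omega_g\leftrightarrow\omega_A^{\ell-1}$, $\eta_g\leftrightarrow\eta_A^{\ell-1}$ that you correctly flag as the place where conventions must be checked.
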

\begin{proof}
	The isomorphism $\beta_\dR$ is induced from an isomorphism $$M(g\otimes h)_\dR\simeq M(\psi_A^{\ell+m-2})_\dR\oplus M(\psi_A^{\ell-m})(1-m)_\dR$$ that respects the Hodge filtration. Observe that
	\begin{align*}
	\Fil^{0}M(g\otimes h)_\dR/\Fil^{m-1}M(g\otimes h)_\dR=\langle \eta_g\wedge\eta_h\rangle\\
	\Fil^{m-1}M(g\otimes h)_\dR/\Fil^{\ell-1}M(g\otimes h)_\dR=\langle \eta_g\wedge\omega_h\rangle,
	\end{align*}
	On the other hand, we have that:
	\begin{align*}
	\Fil^0 (M(\psi_A^{\ell+m-1}))_\dR/\Fil^{m-1}(M(\psi_A^{\ell+m-1}))_\dR&=\eta_A^{\ell+m-2},\\
	\Fil^{m-1} (M(\psi_A^{\ell+m-1}))_\dR/\Fil^{\ell-1}(M(\psi_A^{\ell+m-1}))_\dR&=0.
	\end{align*}
	As for $(M(\psi_A^{\ell-m})(1-m))_\dR$, recall that it is isomorphic to $(M(\psi_A^{\ell-m}))_\dR$ with the Hodge filtration shifted $(m-1)$-positions. That is:
	\begin{align*}
	\Fil^0 (M(\psi_A^{\ell-m})(1-m))_\dR &=\dots =  \Fil^{m-1} (M(\psi_A^{\ell-m})(1-m))_\dR = \langle \omega_A^{\ell-m},\eta_A^{\ell-m}\rangle \\
	\Fil^m (M(\psi_A^{\ell-m})(1-m))_\dR &= \dots   \Fil^{\ell-1} (M(\psi_A^{\ell-m})(1-m))_\dR =\langle \omega_A^{\ell-m}\rangle\\
	\Fil^\ell (M(\psi_A^{\ell-m})(1-m))_\dR &= 0.
	\end{align*}
	Therefore
	\begin{align*}
	\Fil^{0} (M(\psi_A^{\ell-m})(1-m))_\dR /\Fil^{m-1}(M(\psi_A^{\ell-m})(1-m))_\dR&=0,\\
	\Fil^{m-1} (M(\psi_A^{\ell-m})(1-m))_\dR /\Fil^{\ell-1}(M(\psi_A^{\ell-m})(1-m))_\dR&=\langle\eta_A^{\ell-m}\rangle.
	\end{align*}	
	
\end{proof}

\begin{proof}[Proof of Proposition \ref{prop: regulator diagonal}]
	By Assumption \ref{ass: naturality of abel-jacobi}, the regulator of $f$, $g$, $h$, can be computed as  
	\begin{align*}
	\operatorname{Reg}(f,g,h)=\left|
	\begin{array}{rr}
	\AJ_p(\beta_{\CH}(\Delta_1))(\beta_\dR(\omega_f\wedge \eta_g\wedge\omega_h)) &  \AJ_p(\beta_{\CH}(\Delta_1))(\beta_\dR(\omega_f\wedge \eta_g\wedge\eta_h))\\
	\AJ_p(\beta_{\CH}(\Delta_2))(\beta_\dR(\omega_f\wedge \eta_g\wedge\omega_h)) &  \AJ_p(\beta_{\CH}(\Delta_2))(\beta_\dR(\omega_f\wedge \eta_g\wedge\eta_h))
	\end{array}
	\right|.
	\end{align*}
	
By choosing a basis of the Chow group satisfying \eqref{eq: beta chow}, we find that
\begin{align*}
\operatorname{Reg}(f,g,h)=&\left|
\begin{array}{rr}
\AJ_p((\Delta_1^1,0))((\eta_A^{\ell+m-2},0)) &  \AJ_p((\Delta_1,0))((0,\eta_A^{\ell-m}))\\
\AJ_p((0,\Delta_2^2))((\eta_A^{\ell+m-2},0)) &  \AJ_p((0,\Delta_2^2))((0,\eta_A^{\ell-m}))
\end{array}
\right| \pmod{K^\times}\\ =& \AJ_p(\Delta_1^1)(\eta_A^{\ell+m-2})\cdot \AJ_p(\Delta_2^2)(\eta_A^{\ell-m})\pmod{K^\times}.
\end{align*}

\end{proof}

In order to compare the regulator expressed as in Proposition \ref{prop: regulator diagonal} with the right hand side of (\ref{eq: main formula in the general case}), we focus on the generalised Heegner cycles (\ref{eq: ghcs}) appearing in this setting.

\begin{lemma}
	In the setting of this subsection we have that $H_{c_i,f}=K$. That is to say, the Heegner cycles $\tilde{\Delta}^{\psi_i^{-1}}$ are defined over $K$.
\end{lemma}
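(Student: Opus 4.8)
The plan is to reduce the statement to the description of $H_{c_i,f}$ recalled in \S\ref{sec: Generalised Heegner cycles}: by construction it is the subextension of $F/K$ that corresponds, under the isomorphism $\Gal(F/K_{c_i})\simeq(\Z/N\Z)^\times/\{\pm1\}$, to the subgroup $\ker\chi_f$. So it is enough to prove two things: that $K_{c_i}=K$, and that $\chi_f$ is trivial.

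First I would check that $\chi_f=1$. By \eqref{eq: nebentype chars special case} we have $\chi_f=\chi_K^{\ell+m}$, which is either trivial or the quadratic character $\chi_K$; the latter has conductor $D_K$. But $\chi_f$ is a Dirichlet character modulo $N_f$, and $\gcd(N_f,D_K)=1$ by the standing hypotheses inherited from \S\ref{sec: statement of the particular case}, so $\chi_f=\chi_K$ is impossible. Hence $\chi_f=1$ (equivalently, $\ell+m$ is even) and $\cN_{\chi_f}=\cO_K$.

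Next I would show that $c_i=1$. The conductor of $\psi_A$ is $(\sqrt{-D_K})$, so every power of $\psi_A$ has conductor dividing $(\sqrt{-D_K})$, while the norm character $\norm_K$ is everywhere unramified; in view of $\psi_1=\psi_A^{\ell+m-2}\norm_K^{2-\ell-m}$ and $\psi_2=\psi_A^{\ell-m}\norm_K^{1-\ell}$, the conductor $\fc_{\psi_i}$ therefore divides $(\sqrt{-D_K})$, hence is supported only on primes of $K$ above $D_K$. On the other hand, by the running assumptions $\fc_{\psi_i}=c_i\cN_i=c_i\cN_{\chi_f}=(c_i)$ with $c_i$ a positive integer coprime to $D_K$; the only such ideal dividing $(\sqrt{-D_K})$ is $\cO_K$, so $c_i=1$.

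Finally, $c_i=1$ forces $\cO_{c_i}=\cO_K$, so $K_{c_i}$ is the Hilbert class field of $K$, which equals $K$ because $h_K=1$. Since $\chi_f=1$, the subgroup $\ker\chi_f\subseteq(\Z/N\Z)^\times/\{\pm1\}$ is the whole group, so the subextension of $F/K_{c_i}$ it cuts out is $K_{c_i}$ itself; therefore $H_{c_i,f}=K$. I do not foresee any real difficulty here: the only point that requires a little care is the conductor bookkeeping in the third paragraph, i.e.\ verifying that raising $\psi_A$ to a power and twisting by $\norm_K$ introduces no ramification outside $D_K$, while everything else is immediate from the definitions and the class number one hypothesis.
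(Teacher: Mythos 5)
Your proof is correct and follows essentially the same route as the paper: compute the conductor of $\psi_i$ to force $c_i=1$, use $h_K=1$ to get $K_{c_i}=K$, and observe that the subgroup cutting out $H_{c_i,f}$ is all of $\Gal(F/K)$. The only (harmless) difference is that you first rule out $\chi_f=\chi_K$ via $\gcd(N_f,D_K)=1$, whereas the paper keeps both cases $\chi_f\in\{1,\chi_K\}$ and notes that in either case $\ker(\chi_f)\cdot\{\pm1\}$ is the full group (since $\chi_K$ is odd), so the conclusion is unchanged.
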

\begin{proof}
	Recall that
	$$\psi_1=\psi_A^{m+\ell-2}\norm_K^{2-\ell-m}, \ \ \psi_2=\psi_A^{\ell-m}\norm_K^{1-\ell}.$$
	For $i\in\{ 1,2 \}$, the conductor of $\psi_i$ is of the form $c_i\cdot\cN_{\chi_f}$ where  the norm of $\cN_{\chi_f}$ equals the conductor $N_{\chi_f}$ of $\chi_f$ and $(c_i,N_{\chi_f})=1$.
	
	By (\ref{eq: levels special case}) and (\ref{eq: nebentype chars special case}), the conductor of $\chi_f$ is  $N_{\chi_f}=(D_K)^\epsilon$ where $\epsilon\in\{0,1\}$. On the other hand, the conductor of $\psi_A$ is only divisible by primes above $D_K$, so $c_i=1$. 
	Recall the extension $F/K$ defined in \cite[\S4.2]{BDP2} such that $\Gal(F/K)\cong(\Z/N_{\chi_f}\Z)^\times/(\pm1)$. The field $H_{c_1,f}=H_{c_2,f}=H_{1,f}$ is the subextension of $F/K$ corresponding to $\ker(\chi_f)(\pm1)\subseteq (\Z/N_f\Z)^\times/(\pm1)$, which is $K$ by (\ref{eq: nebentype chars special case}). 
\end{proof}

	Denote $\tilde{\Delta}^{\psi_2^{-1}\norm_K^{1-m}}:=\tilde{\Delta}^{\psi_2^{-1}\norm_K^{1-m}}_{k-1,\ell-m,1}\in\CH^{c-m+1}(W_{k-1}\times A^{\ell-m}/K)_0$.

\begin{proposition}\label{prop: abel jacobi of different cycles}
	\begin{align*}
	\AJ_p(\tilde{\Delta}^{\psi_2^{-1}})(\omega_f\wedge\omega_A^{m-1}\eta_A^{\ell-1})=(2\sqrt{-D_K})^{1-m}\AJ_p(\tilde{\Delta}^{\psi_2^{-1}\norm_K^{1-m}})(\omega_f\wedge\eta_A^{\ell-m}).
	\end{align*}
	
\end{proposition}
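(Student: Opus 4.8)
The plan is to compare the two generalised Heegner cycles directly, exploiting the preceding lemma: in the present situation $c_1=c_2=1$ and $H_{c_i,f}=K$, so both cycles are attached to the trivial ideal class, the sum over $\Pic(\cO_{c_i})$ in \eqref{eq: nuestro ciclo de heegner generalizado} reduces to a single term, and the isogeny $\phi_{\fa}\circ\phi_c$ may be taken to be the identity. Thus, up to the projectors $e_f$ and $\epsilon_X$, the cycle $\tilde\Delta^{\psi_2^{-1}}=\tilde\Delta^{\psi_2^{-1}}_{k-2,\ell+m-2,1}$ is supported on the fibre $\pi^{-1}(P)\cong A^{k-2}\times A^{\ell+m-2}$ and is built out of $\ell+m-2$ copies of the diagonal $\Delta_A\subset A\times A$ together with $u=(k-\ell-m)/2$ copies of $\Gamma_A:=\mathrm{Graph}(\sqrt{-D_K})^{\mathrm{tr}}\subset A\times A$, while $\tilde\Delta^{\psi_2^{-1}\norm_K^{1-m}}$ is built out of $\ell-m$ copies of $\Delta_A$ and $u'=u+(m-1)$ copies of $\Gamma_A$.

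First I would isolate the difference between the two cycles into $m-1$ identical ``blocks'': passing from $\tilde\Delta^{\psi_2^{-1}\norm_K^{1-m}}$ to $\tilde\Delta^{\psi_2^{-1}}$ amounts to trading $m-1$ of the CM cycles $\Gamma_A$ (each living on a pair of $A$-factors of the fibre $\pi^{-1}(P)$) for $m-1$ products $\Delta_A\times\Delta_A$, together with the $2(m-1)=(\ell+m-2)-(\ell-m)$ extra copies of $A$ in the $A^{\ell+m-2}$-factor of $X_{k-2,\ell+m-2}$. Correspondingly, writing $\ell-1=(m-1)+(\ell-m)$, the de Rham class $\omega_f\wedge\omega_A^{m-1}\eta_A^{\ell-1}$ decomposes, under the symmetrisation inclusions $\mathrm{Sym}^{r}\coh^1_\dR(A)\subset\coh^{r}_\dR(A^{r})$ recalled in \S\ref{sec: p-adic rankin L-function}, as the class $\omega_f\wedge\eta_A^{\ell-m}$ that is paired against $\tilde\Delta^{\psi_2^{-1}\norm_K^{1-m}}$, tensored with $m-1$ copies of the ``diagonal'' class $\omega_A\wedge\eta_A$ placed on the extra pairs of $A$-factors. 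Using the compatibility of $\AJ_p$ with Künneth projectors and with push-forward along the diagonals (the projection formula), one reduces $\AJ_p(\tilde\Delta^{\psi_2^{-1}})(\omega_f\wedge\omega_A^{m-1}\eta_A^{\ell-1})$ to $\AJ_p(\tilde\Delta^{\psi_2^{-1}\norm_K^{1-m}})(\omega_f\wedge\eta_A^{\ell-m})$ multiplied by the $(m-1)$-st power of the ``local'' contribution of a single block.

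It then remains to evaluate that local contribution. For one block the relevant computation is that along $\Delta_A\times\Delta_A$ the two extra $A$-factors carrying $\omega_A$ and $\eta_A$ are transported (the diagonal acts as the identity on cohomology) to the class $\omega_{A}\otimes\eta_{A}$ on the corresponding pair of $A_{\fa}$-factors, whereas the cycle $\Gamma_A=\mathrm{Graph}(\sqrt{-D_K})^{\mathrm{tr}}$ has $\coh^1_\dR(A)\otimes\coh^1_\dR(A)$-component equal to $\sqrt{-D_K}\,(\omega_A\otimes\eta_A+\eta_A\otimes\omega_A)$ for the normalisation $\langle\omega_A,\eta_A\rangle=1$, which after the symmetriser (using $\epsilon_{A^2}(\omega_A\otimes\eta_A)=\tfrac12(\omega_A\otimes\eta_A+\eta_A\otimes\omega_A)$) equals $2\sqrt{-D_K}\cdot\epsilon_{A^2}(\omega_A\otimes\eta_A)$; comparing the two gives the factor $(2\sqrt{-D_K})^{-1}$ per block, hence the constant $(2\sqrt{-D_K})^{1-m}$ of the statement. (One can alternatively relate both sides, via Theorem \ref{thm BDP main thm1} applied with $(r,j)=(\ell+m-2,m-1)$ and with $(r,j)=(\ell-m,0)$, to the single special value $\L_p(f,K)(\psi_2^{-1}\norm_K^{(k-\ell-m+2)/2})=\L_p(f,K)(\Psi_{gh'}(k,\ell,m))$; this route, however, only pins down the constant up to sign, requires the relevant non-vanishing, and is more delicate to normalise, so the direct geometric comparison is preferable.) The main difficulty I anticipate is precisely this bookkeeping of constants and signs: keeping careful track of the interplay of the projectors $e_f$, $\epsilon_{X}$ and $\epsilon_{A^{r}}$, of the Poincaré-pairing normalisations, and of the degree/trace factors implicit in \eqref{eq: gen heegner cycle}, so that the answer comes out exactly as $(2\sqrt{-D_K})^{1-m}$, with no stray rational factor and with the correct sign (which is unambiguous here because one is comparing genuine cycle classes rather than square roots of $p$-adic $L$-values).
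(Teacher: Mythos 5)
Your argument is correct and is in substance the same as the paper's: the paper invokes the correspondence $P_*$ of \cite[Proposition 4.1.1]{BDP2}, induced by the cycle $W_{k-2}\times A^{\ell-m}\times A^{m-1}$ embedded via $\Id\times(\sqrt{-D_K}\times\Id)^{m-1}\times\Id$, which is exactly your block-by-block trade of $m-1$ copies of $\Gamma_A=\mathrm{Graph}(\sqrt{-D_K})^{\mathrm{tr}}$ for products of diagonals, and then deduces the Abel--Jacobi relation ``as in Proposition 4.1.2'' of loc.\ cit., which is your local computation producing the factor $2\sqrt{-D_K}$ per block. Your remark that the alternative route through Theorem \ref{thm BDP main thm1} only determines the constant up to sign and needs a non-vanishing hypothesis is also well taken; the direct (correspondence) comparison is indeed the one used.
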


\begin{proof}

	By \cite[Proposition 4.1.1]{BDP2}, there is a correspondence
	$$P:W_{k-2}\times A^{\ell+m-2}\longrightarrow W_{k-2}\times A^{\ell-m}. $$
	induced by the cycle
	$$Z=W_{k-2}\times A^{\ell-m}\times A^{m-1}\in\CH^{k+\ell-2}(W_{k-2}\times A^{\ell+m-2}\times W_{k-2}\times A^{\ell-m})$$
	embedded in $$W_{k-2}\times A^{\ell+m-2}\times W_{k-2}\times A^{\ell-m}=W_{k-2}\times A^{\ell-m}\times (A\times A)^{m-1}\times W_{k-2}\times A^{\ell-m}$$ via $$\Id\times(\sqrt{-D_K}\times\Id)^{m-1}\times\Id.$$

	It induces a homomorphism of Chow groups
	
	\[ P_*:\CH^\frac{k+\ell+m-2}{2}(W_{k-2}\times A^{\ell+m-2})\longrightarrow \CH^\frac{k+\ell-m}{2}(W_{k-2}\times A^{\ell-m})      \]

	For each $\fa$ ideal of $\cO_K$ prime to $\cN$, let $$\Delta_{k-1,\fa}\in\CH^{k-1}(W_{k-2}\times A^{k-2}),$$ be the generalised Heegner cycle defined in \cite{BDP1}. As we recalled in \S\ref{sec: Generalised Heegner cycles}, for each $b\leq k-2$ such that $b\equiv k \mod 2$ there is a cycle 
	$$\Delta_{k-1,b,\fa}\in\CH^\frac{k+b}{2}(W_{k-2}\times A^b),$$ as defined in \cite{BDP2}.
	
	The correspondence above gives the relations between these cycles:
	
	$$P_{*}(\Delta_{k-1,\ell+m-2,\fa})=(N\fa)^{m-1}\Delta_{k-1,\ell-m,\fa}.$$

	Using this relation, as in \cite[Proposition 4.1.2]{BDP2}, we obtain
	$$(2\sqrt{-D_K})^{m-1}\AJ_p(\Delta_{k-1,\ell+m-2,\fa})(\omega_f\wedge\omega_A^{m-1}\eta_A^{\ell-1})=(N\fa)^{m-1}\AJ_p(\Delta_{k-1,\ell-m,\fa})(\omega_f\wedge\eta_A^{\ell-m}).$$
	
	Then, finally,

	\begin{equation*}
	\AJ_p(\tilde{\Delta}^{\psi_2^{-1}})(\omega_f\wedge\omega_A^{m-1}\eta_A^{\ell-1})=(2\sqrt{-D_K})^{1-m}\AJ_p(\tilde{\Delta}^{\psi_2^{-1}\norm_K^{1-m}})(\omega_f\wedge\eta_A^{\ell-m}).
	\end{equation*}

\end{proof}

Finally, we state and prove the main result of this section. Recall that $f$, $g$, and $h$ are modular forms of weights $k$, $\ell$, and $m$ respectively with $\ell>m\geq 2$ and $k\geq \ell+m$. In addition, $g$ and $h$ are theta series of an imaginary quadratic field $K$ of class number $1$ that satisfies Assumption \ref{ass: condition on DK} and in which $p$ splits. More precisely, $g=\theta(\psi_A^{\ell-1})$ and $h=\theta(\psi_A^{m-1})$, where $A/K$ is an elliptic curve of conductor $\sqrt{-D_K}$ which has CM by $\cO_K$.

\begin{theorem}\label{thm: main general weights}
	Let $(\ubff,\ubfg,\ubfh)$ be the choice of test vector of Theorem \ref{thm: Hsieh interpolation formula}. Assume that $$ \dim_L\CH^c(M(f\otimes g\otimes h))_{0,L}=2 \text{ and  }\L^g_p(\ubff,\ubfg,\ubfh)(k,\ell,m)\neq 0.$$ Under Conjecture \ref{conj: tate} and Assumption \ref{ass: naturality of abel-jacobi},  there exists a quadratic extension $L_0$ of $L$ and a $\lambda\in L_0$ such that 
	\begin{equation*}
	\L^g_p(\ubff,\ubfg,\ubfh)(k,\ell,m)= \mathrm{Reg}(f,g,h) \mod (K\cdot L_0)^\times.
	\end{equation*} 
\end{theorem}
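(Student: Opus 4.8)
The plan is to deduce the theorem from the explicit formula of Proposition \ref{prop: main formula in the general case} and the diagonalized form of the regulator in Proposition \ref{prop: regulator diagonal}, bridging the two through the generalized Heegner cycles. First I would apply Proposition \ref{prop: main formula in the general case}: it produces a quadratic extension $L_0/L$, a scalar $\lambda\in L_0$, and the algebraic number $\mu=\Omega^{2-2\ell}\pi^{\ell-2}L(\Psi_g(\ell)^{-1},0)$ with
\[
\L_p^g(\ubff,\ubfg,\ubfh)(k,\ell,m)=\frac{\lambda}{\mu}\cdot \AJ_p(\tilde \Delta^{\psi_1^{-1}})(\omega_f\wedge \eta_A^{\ell + m -2})\cdot \AJ_p(\tilde \Delta^{\psi_2^{-1}})(\omega_f\wedge \omega_A^{m-1}\eta_A^{\ell-1}).
\]
Here $\mu\neq 0$ because $\langle g,g\rangle\neq 0$, and in fact $\mu\in K^\times$, since up to $\Q^\times$ it is the value at $0$ of the Hecke $L$-function of the $K$-valued character $\Psi_g(\ell)^{-1}$ divided by the appropriate power of the CM period of $A$ (this is implicit in the proof of Proposition \ref{prop: main formula in the general case}); moreover $\lambda\in L_0^\times$ because the left-hand side is assumed nonzero. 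Rewriting the last factor by Proposition \ref{prop: abel jacobi of different cycles} and absorbing the factor $(2\sqrt{-D_K})^{1-m}\in K^\times$, I obtain
\[
\L_p^g(\ubff,\ubfg,\ubfh)(k,\ell,m)\equiv \AJ_p(\tilde \Delta^{\psi_1^{-1}})(\omega_f\wedge \eta_A^{\ell + m -2})\cdot \AJ_p(\tilde \Delta^{\psi_2^{-1}\norm_K^{1-m}})(\omega_f\wedge \eta_A^{\ell-m}) \pmod{(K\cdot L_0)^\times},
\]
and in particular both Abel--Jacobi values on the right are nonzero.

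Next I would feed this nonvanishing into the decomposition of Chow groups. By Conjecture \ref{conj: tate} and Proposition \ref{prop: decomposition of chow and derham}, the isomorphism $\beta_{\CH}$ splits $\CH^c(M(f\otimes g\otimes h))_{0,L}$ as $\CH^c(M_{f/K}\otimes M(\psi_A^{\ell+m-2}))_{0,L}\oplus\CH^{c-m+1}(M_{f/K}\otimes M(\psi_A^{\ell-m}))_{0,L}$. One checks, from the construction of the generalized Heegner cycles in \cite[\S4.1, \S4.2]{BDP2} together with the description of the motivic idempotents in \S\ref{sec: motives attached to certain hecke characters}, that $\tilde\Delta^{\psi_1^{-1}}$ represents a class in the first summand and $\tilde\Delta^{\psi_2^{-1}\norm_K^{1-m}}$ a class in the second: the twisted average over $\Pic(\cO_{c_i})$ that cuts out the $\psi_i$-part annihilates the isotypic line of the competing Hecke character, the characters $\psi_A^{\ell+m-2}$ and $\psi_A^{\ell-m}\norm_K^{m-1}$ being distinct (as $m\geq 2$). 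Since each of these cycles is nonzero by the previous step, each summand has dimension $\geq 1$; as the total dimension equals $2$, each summand has dimension exactly $1$ — so we are automatically in the ``rank $(1,1)$ setting'' of \S\ref{subsection: The proof of a special case} — and $\tilde\Delta^{\psi_1^{-1}}$ (resp. $\tilde\Delta^{\psi_2^{-1}\norm_K^{1-m}}$) spans the corresponding line over $L$. I expect this step to be the main obstacle: rigorously matching the \emph{abstract} summands produced by Tate's conjecture with the spans of the \emph{explicit} Heegner cycles — in particular, ruling out any cross-component contribution — requires a careful tracking of the motivic projectors of \S\ref{sec: motives attached to certain hecke characters} against the correspondences used to define the cycles in \cite[\S4]{BDP2}.

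Finally, since the rank $(1,1)$ hypothesis now holds, Proposition \ref{prop: regulator diagonal} (which incorporates Assumption \ref{ass: naturality of abel-jacobi}) applies: for a basis $(\Delta_1,\Delta_2)$ adapted to $\beta_{\CH}$, with $\beta_{\CH}(\Delta_1)=(\Delta_1^1,0)$ and $\beta_{\CH}(\Delta_2)=(0,\Delta_2^2)$, one has
\[
\mathrm{Reg}(f,g,h)\equiv \AJ_p(\Delta_1^1)(\omega_f\wedge\eta_A^{\ell+m-2})\cdot \AJ_p(\Delta_2^2)(\omega_f\wedge\eta_A^{\ell-m})\pmod{K^\times}.
\]
Both $\Delta_1^1$ and $\tilde\Delta^{\psi_1^{-1}}$ span the same one-dimensional $L$-space, so $\Delta_1^1=c_1\,\tilde\Delta^{\psi_1^{-1}}$ with $c_1\in L^\times$, and likewise $\Delta_2^2=c_2\,\tilde\Delta^{\psi_2^{-1}\norm_K^{1-m}}$ with $c_2\in L^\times$; substituting and comparing with the displayed formula from the first step yields $\L_p^g(\ubff,\ubfg,\ubfh)(k,\ell,m)\equiv \mathrm{Reg}(f,g,h)\pmod{(K\cdot L_0)^\times}$, as claimed. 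The remaining point is routine: one only needs that $\mu$, the Tate-twist factor $(2\sqrt{-D_K})^{m-1}$ and the transition scalars $c_1,c_2$ all lie in $K\cdot L_0$ while $\lambda\in L_0^\times$, which is exactly what makes the final congruence hold modulo $(K\cdot L_0)^\times$ with $L_0/L$ of degree at most $2$.
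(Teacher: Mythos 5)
Your proposal is correct and follows essentially the same route as the paper: use Proposition \ref{prop: main formula in the general case} together with Proposition \ref{prop: abel jacobi of different cycles} to deduce that the two generalised Heegner cycles have nonzero Abel--Jacobi image (hence are nontorsion), conclude from the rank-$2$ hypothesis that they may be taken as the adapted basis $(\Delta_1^1,\Delta_2^2)$, invoke Proposition \ref{prop: regulator diagonal}, and check $\mu\in K^\times$ via the CM-period relation of \cite[Proposition 2.11]{BDP2}. The step you flag as the main obstacle --- matching the explicit cycles with the abstract summands of $\beta_{\CH}$ --- is treated at the same level of detail (i.e., asserted) in the paper's own proof, so your argument is, if anything, slightly more candid about where the work lies.
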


\begin{proof}
Assume that $\L_p^g(\bff,\bfg,\bfh)(k,\ell,m)\neq0$. Combining Proposition \ref{prop: main formula in the general case} and Proposition \ref{prop: abel jacobi of different cycles} with the fact that the kernel of the $p$-adic Abel--Jacobi map contains all torsion cycles, we obtain that the generalised Heegner cycles $\tilde{\Delta}^{\psi_1^{-1}}$ and $\tilde{\Delta}^{\psi_2^{-1}\norm_K^{1-m}}$ are nontorsion. Since we are in a situation of algebraic rank $2$, this implies that the preimages via $\beta_{\CH}$ of  $$(\tilde{\Delta}^{\psi_1^{-1}},0), \ \ (0,\tilde{\Delta}^{\psi_2^{-1}\norm_K^{1-m}})$$  generate $\CH^c(M(f\otimes g\otimes h))_0$. In other words, we can choose $\Delta_1, \Delta_2$ in such a way that $$\Delta_1^1=\tilde{\Delta}^{\psi_1^{-1}},  \ \ \ \ \Delta_2^2=\tilde{\Delta}^{\psi_2^{-1}\norm_K^{1-m}}.$$ 

	On the other hand, the period $\Omega$ attached to the elliptic curve $A/K$ coincides with the period $\Omega(\psi_A)$ attached to the Hecke character $\psi_A$ as in \cite[\S2.3]{BDP2}. 
	It follows from \cite[Proposition 2.11 (2)]{BDP2} that,  $\Omega(\psi_A^r)=\Omega^r \mod K^\times$ for $r\geq0$. Using  \cite[Proposition 2.11 (2)]{BDP2}, we conclude that the  factor $\mu$ appearing in (\ref{eq: main formula in the general case}) lies in $K^\times$.

	The result then follows by combining Proposition \ref{prop: regulator diagonal}, Proposition \ref{prop: abel jacobi of different cycles} and Proposition \ref{prop: main formula in the general case}.

\end{proof}

\bibliographystyle{halpha}
\bibliography{refs}

\newcommand{\etalchar}[1]{$^{#1}$}
\begin{thebibliography}{BCD{\etalchar{+}}14}

\bibitem[AL78]{atkin-li}
A.~O.~L. Atkin and Wen Ch'ing~Winnie Li.
\newblock Twists of newforms and pseudo-eigenvalues of {$W$}-operators.
\newblock {\em Invent. Math.}, 48(3):221--243, 1978.

\bibitem[BCD{\etalchar{+}}14]{tale}
Massimo Bertolini, Francesc Castella, Henri Darmon, Samit Dasgupta, Kartik
  Prasanna, and Victor Rotger.
\newblock {$p$}-adic {$L$}-functions and {E}uler systems: a tale in two
  trilogies.
\newblock In {\em Automorphic forms and {G}alois representations. {V}ol. 1},
  volume 414 of {\em London Math. Soc. Lecture Note Ser.}, pages 52--101.
  Cambridge Univ. Press, Cambridge, 2014.

\bibitem[BD90]{BDKo}
Massimo Bertolini and Henri Darmon.
\newblock Kolyvagin's descent and {M}ordell-{W}eil groups over ring class
  fields.
\newblock {\em J. Reine Angew. Math.}, 412:63--74, 1990.

\bibitem[BDP13]{BDP1}
Massimo Bertolini, Henri Darmon, and Kartik Prasanna.
\newblock Generalized {H}eegner cycles and {$p$}-adic {R}ankin {$L$}-series.
\newblock {\em Duke Math. J.}, 162(6):1033--1148, 2013.
\newblock With an appendix by Brian Conrad.

\bibitem[BDP14]{BDP4}
Massimo Bertolini, Henri Darmon, and Kartik Prasanna.
\newblock Chow-{H}eegner points on {CM} elliptic curves and values of
  {$p$}-adic {$L$}-functions.
\newblock {\em Int. Math. Res. Not. IMRN}, (3):745--793, 2014.

\bibitem[BDP17]{BDP2}
Massimo Bertolini, Henri Darmon, and Kartik Prasanna.
\newblock {$p$}-adic {$L$}-functions and the coniveau filtration on {C}how
  groups.
\newblock {\em J. Reine Angew. Math.}, 731:21--86, 2017.
\newblock With an appendix by Brian Conrad.

\bibitem[BSV17]{BSV}
Massimo Bertolini, Marco Seveso, and Rodolfo Venerucci.
\newblock Reciprocity laws for diagonal classes and rational points on elliptic
  curves.
\newblock {\em Submitted}, 2017.

\bibitem[{Cas}]{Castella-variation}
Francesc {Castella}.
\newblock {On the $p$-adic variation of Heegner points}.
\newblock {\em J. Inst. Math. Jussieu}.
\newblock DOI: 10.1017/S1474748019000094.

\bibitem[Cas18]{Cas18}
Francesc Castella.
\newblock On the exceptional specializations of big {H}eegner points.
\newblock {\em J. Inst. Math. Jussieu}, 17(1):207--240, 2018.

\bibitem[CR]{CR}
Daniele Casazza and Victor Rotger.
\newblock On the elliptic {S}tark conjecture at primes of multiplicative
  reduction.
\newblock {\em Indiana Univ. Math. J., to appear.}

\bibitem[DLR15]{DLR}
Henri Darmon, Alan Lauder, and Victor Rotger.
\newblock Stark points and {$p$}-adic iterated integrals attached to modular
  forms of weight one.
\newblock {\em Forum Math. Pi}, 3:e8, 95, 2015.

\bibitem[DR14]{DR1}
Henri Darmon and Victor Rotger.
\newblock Diagonal cycles and {E}uler systems {I}: {A} {$p$}-adic
  {G}ross-{Z}agier formula.
\newblock {\em Ann. Sci. \'Ec. Norm. Sup\'er. (4)}, 47(4):779--832, 2014.

\bibitem[DR18]{DR3}
Henri Darmon and Victor Rotger.
\newblock Stark-{H}eegner points and generalised {K}ato classes.
\newblock {\em Submitted}, 2018.

\bibitem[{For}17]{Fo}
Michele {Fornea}.
\newblock {Twisted triple product p-adic ${L}$-functions and Hirzebruch-Zagier
  cycles}.
\newblock {\em arXiv e-prints}, page arXiv:1710.03865, Oct 2017, 1710.03865.

\bibitem[GGMR]{GGMR}
Francesca Gatti, Xavier Guitart, Marc Masdeu, and Victor Rotger.
\newblock Special values of triple-product $p$-adic ${L}$-functions and
  non-cristalline diagonal classes.
\newblock {\em In Preparation}.

\bibitem[Gha05]{Gh05}
Eknath Ghate.
\newblock Ordinary forms and their local {G}alois representations.
\newblock In {\em Algebra and number theory}, pages 226--242. Hindustan Book
  Agency, Delhi, 2005.

\bibitem[Gro80]{GrossCM}
Benedict~H. Gross.
\newblock {\em Arithmetic on elliptic curves with complex multiplication},
  volume 776 of {\em Lecture Notes in Mathematics}.
\newblock Springer, Berlin, 1980.
\newblock With an appendix by B. Mazur.

\bibitem[GZ86]{GZ}
Benedict~H. Gross and Don~B. Zagier.
\newblock Heegner points and derivatives of {$L$}-series.
\newblock {\em Invent. Math.}, 84(2):225--320, 1986.

\bibitem[How07]{How07}
Benjamin Howard.
\newblock Variation of {H}eegner points in {H}ida families.
\newblock {\em Invent. Math.}, 167(1):91--128, 2007.

\bibitem[{Hsi}17]{Hsieh}
Ming-Lun {Hsieh}.
\newblock {Hida families and p-adic triple product {$L$}-functions}.
\newblock {\em ArXiv e-prints}, May 2017, 1705.02717.

\bibitem[Jan90]{jannsen}
Uwe Jannsen.
\newblock {\em Mixed motives and algebraic {$K$}-theory}, volume 1400 of {\em
  Lecture Notes in Mathematics}.
\newblock Springer-Verlag, Berlin, 1990.
\newblock With appendices by S. Bloch and C. Schoen.

\bibitem[Kar00]{karpenko}
Nikita~A. Karpenko.
\newblock Weil transfer of algebraic cycles.
\newblock {\em Indag. Math. (N.S.)}, 11(1):73--86, 2000.

\bibitem[Kat76]{Kat76}
Nicholas~M. Katz.
\newblock {$p$}-adic interpolation of real analytic {E}isenstein series.
\newblock {\em Ann. of Math. (2)}, 104(3):459--571, 1976.

\bibitem[Kol90]{kolyvagin}
Victor~A. Kolyvagin.
\newblock Euler systems.
\newblock In {\em The Grothendieck Festschrift, Vol.\ II}, volume~87 of {\em
  Progr. Math.}, pages 435--483. Birkh\"auser Boston, Boston, MA, 1990.

\bibitem[LSL17]{LSZ2}
David {Loeffler}, Chris {Skinner}, and Sarah {Livia Zerbes}.
\newblock {Euler systems for GSp(4)}.
\newblock {\em arXiv e-prints}, page arXiv:1706.00201, Jun 2017, 1706.00201.

\bibitem[LSZ19]{LSZ1}
David Loeffler, Christopher Skinner, and Sarah~Livia Zerbes.
\newblock Syntomic regulators of {A}sai--{F}lach classes.
\newblock In {\em Proceedings of Iwasawa 2017}. Mathematical Society of Japan,
  January 2019.

\bibitem[Nek95]{Nekovar}
Jan Nekov\'a\v{r}.
\newblock On the {$p$}-adic height of {H}eegner cycles.
\newblock {\em Math. Ann.}, 302(4):609--686, 1995.

\bibitem[PSR87]{PSR87}
I.~Piatetski-Shapiro and Stephen Rallis.
\newblock Rankin triple {$L$}-functions.
\newblock {\em Compositio Math.}, 64(1):31--115, 1987.

\bibitem[Sch88]{scha}
Norbert Schappacher.
\newblock {\em Periods of {H}ecke characters}, volume 1301 of {\em Lecture
  Notes in Mathematics}.
\newblock Springer-Verlag, Berlin, 1988.

\bibitem[Sch90]{Sch90}
Anthony~J. Scholl.
\newblock Motives for modular forms.
\newblock {\em Invent. Math.}, 100(2):419--430, 1990.

\bibitem[Sch94]{scholl}
Anthony~J. Scholl.
\newblock Classical motives.
\newblock In {\em Motives ({S}eattle, {WA}, 1991)}, volume~55 of {\em Proc.
  Sympos. Pure Math.}, pages 163--187. Amer. Math. Soc., Providence, RI, 1994.

\bibitem[Zha97]{zhang-cycles}
Shouwu Zhang.
\newblock Heights of {H}eegner cycles and derivatives of {$L$}-series.
\newblock {\em Invent. Math.}, 130(1):99--152, 1997.

\bibitem[Zha01]{zhang}
Shouwu Zhang.
\newblock Heights of {H}eegner points on {S}himura curves.
\newblock {\em Ann. of Math. (2)}, 153(1):27--147, 2001.

\end{thebibliography}

\end{document}